\documentclass[preprint,12pt]{elsarticle}
\usepackage{bbm}
\usepackage{amsmath,amssymb,mathrsfs,stmaryrd,amsthm}
\usepackage{mathtools,bm}
\usepackage{eufrak,lmodern}%or amsfonts
\usepackage[ruled,linesnumbered]{algorithm2e}
\newcommand{\kuo}[1]{\left( #1 \right)}
\newcommand{\zkuo}[1]{\left[ #1 \right]}
\newcommand{\dkuo}[1]{\left \{ #1 \right \}}
\newcommand{\e}{\mathbb{E}}
\newcommand{\p}{\mathbb{P}}
\usepackage{xcolor}
\newcommand{\abs}[1]{\left|#1\right|}

\newcommand{\brackt}[1]{\left\langle #1 \right\rangle}
\newcommand{\law}[1]{\mathcal{L}_{#1}}
\numberwithin{equation}{section}

%% The amssymb package provides various useful mathematical symbols
\usepackage{amssymb}
%% The amsthm package provides extended theorem environments
%\usepackage{amsthm}
\usepackage{lineno}
\usepackage[top=3.2cm,bottom=2.2cm,left=2.5cm,right=2.5cm]{geometry}
\journal{Journal of Mathematical Analysis and Applications}
\theoremstyle{plain}
\newtheorem{theorem}{Theorem}[section]

\newtheorem{lemma}[theorem]{Lemma}
\newtheorem{proposition}[theorem]{Proposition}
\theoremstyle{definition}
\newtheorem{definition}[theorem]{Definition}
\newtheorem{remark}[theorem]{Remark}
\newtheorem{assumption}{Assumption}[section]
\numberwithin{equation}{section}

\begin{document}

\begin{frontmatter}

\title{Mean reflected McKean-Vlasov stochastic differential equation}

%\cortext[mycorrespondingauthor]{Corresponding author}
%\ead{}

\author[1]{Shaopeng Hong\corref{cor1}}
\ead{Hong_sp@outlook.com}
\author[2]{Sheng Xiao}
\address[1]{School of Statistics, Southwestern University of Finance and Economics, Chengdu
610074, China}
\address[2]{{School of Mathematics and Statistics, Hunan First Normal University, Changsha 410205, China}}
\cortext[cor1]{Corresponding author}
\begin{abstract}
%% Text of abstract
In this paper, we investigate a class of mean reflected McKean-Vlasov stochastic differential equation (MR-MVSDE), which extends the equation proposed by \cite{briand2020particles} by allowing the solution's distribution to not only constrain its behavior but also affect the diffusion and drift coefficients.
We establish the existence and uniqueness results of MR-MVSDE, investigate the propagation of chaos, and examine the stability properties with respect to the initial condition, coefficients, and driving process. Moreover, we provide rigorous proof of a Freidlin-Wentzell type LDP using the weak convergence method. We also demonstrate the existence of an invariant measure and employ the coupling by change of measure method to prove log-Harnack inequality and shift Harnack inequality. 
\end{abstract}

\begin{keyword}
McKean-Vlasov stochastic differential equation \sep mean reflected \sep particle system \sep stability analysis \sep large deviation principle \sep invariant measure \sep Harnack inequality

\MSC[2020] 60H10
\end{keyword}

\end{frontmatter}

%% main text
\section{Introduction}
This paper considers the following \emph{mean reflected McKean-Vlasov stochastic differential equation} (MR-MVSDE for short).
\begin{equation}\label{eq:1.1}
\left\{\begin{array}{l}
X_t=\xi+\int_0^t b\left(X_s, \mathcal{L}_{X_s}\right) d s+\int_0^t \sigma\left(X_s,\mathcal{L}_{X_s}\right) d B_s+K_t, \quad t \geq 0 \\
\mathbb{E}\left[h\left(X_t\right)\right] \geq 0, \quad \int_0^t \mathbb{E}\left[h\left(X_s\right)\right] d K_s=0, \quad t \geq 0,
\end{array}\right.
\end{equation}
where $\mathcal{L}_{X_t}$ is the law of $X_t$, $(B_t)_{t\geq 0}$ is a standard Brownian motion defined on some complete probability space natural filtration $(\Omega,\mathcal{F},\{\mathcal{F}_t\}_{t\geq 0},\p)$. We will always assume that $h$ is nondecreasing and that the law of $\xi$ is such that $\e\zkuo{h(\xi)}\geq 0$. For the precise conditions on $b, \sigma, h$, we refer the reader to Section \ref{sec:2}. The solution to \eqref{eq:1.1} is a couple of continuous processes $(X, K)$, by the Skorokhod condition $\int_0^t \e[h(X_s)]dK_s = 0$, $K$ is required to ensure that the mean constraint $\e[h(X_t)]\geq 0 $ is satisfied in a minimal way.

The McKean-Vlasov stochastic differential equation (MV-SDE for short), also referred to as the mean-field stochastic differential equation or distribution-dependent stochastic differential equation, was first introduced in the literature by \cite{Mckean1966class} and \cite{vlasov1968vibrational}. In recent years, it has garnered significant attention from scholars. The MV-SDE is used to describe the limiting behavior of mean-field interactive particle systems as $n$ goes to infinity.
 We refer to \cite{sznitman1991topics} for a detailed exposition and references. 
 Currently, the MV-SDE plays a crucial role in various fields such as mean field games, financial derivatives pricing and measure value processes, see e.g. \citep{PTMFG, bush2011Stochastic, hambly2019SPDE, Dawson1995StochasticME} and references therein.

The reflected SDE was introduced by \cite{Skorokhod1961StochasticEF} and \cite{Skorokhod1962StochasticEF}. Since then, it has been widely researched, see e.g. \citep{Lions1984StochasticDE, lundstrom2019stochastic, burdzy2003heat, burdzy2004heat}. This type of equation restricts the path of the solution to a given domain. The reflected MV-SDE, first proposed by \cite{sznitman1984nonlinear}. 
Recently, \cite{adams2022large} studied the MV-SDE with superlinear growth self-stabilizing coefficients on unbounded convex regions, replacing the classical Wasserstein Lipschitz condition. 
Their work proved the existence and uniqueness of the solution, getting the result of the propagation of
 chaos, and examined the Freidlin-Wentzell type Large Deviations Principle (LDP for short), as well as Kramer's law for the exit-time from subdomains that lie within the interior of the reflecting domain. 
In a similar vein, \cite{wei2022mckean} investigated the MV-SDE with oblique reflection on non-smooth time-dependent domains. Their research proved the existence and uniqueness of the solution, demonstrated chaos propagation, and examined the Freidlin-Wentzell type LDP.

%Reflection SDE originated from \cite{Skorokhod1961StochasticEF} and \cite{Skorokhod1962StochasticEF}, and there have been rich research results, see \citep{Lions1984StochasticDE}, this type of equation restricts the path of the solution in a given domain. Regarding the reflective Mckean-Vlasov stochastic differential equation originating from \cite{sznitman1984nonlinear}, the author proves the well-posedness of the solution and the propagation of chaos. \cite{adams2022large} studied Mckean-Vlasov SDE with self-stabilizing coefficients on unbounded convex regions instead of the classic Wasserstein Lipschitz condition. They proved the existence and uniqueness of understanding, chaos propagation, and studied the Freidlin-Wentzell type Large Deviations Principle and an Eyring-Kramer's law for the exit-time from subdomains contained in the interior of the reflecting domain. \cite{wei2022mckean} studied Mckean-Vlasov SDE with oblique reflection on non-smooth time dependent domains, proof of existence and uniqueness of understanding, chaos propagation and Freidlin-Wentzell type Large Deviations Principle.

In their work, \cite{briand2018bsdes} introduced the concept of mean reflected backward stochastic differential equation (MR-BSDE for short), which places restrictions not on the path of the solution, but on its distribution. 
They proved that, under certain conditions, MR-BSDE has a unique solution. Further research on MR-BSDE can be found in \citep{briand2021particles, liu2019bsdes, cui2023well}, and their respective references. Inspired by MR-BSDE,
\cite{briand2020particles} studied the mean reflected stochastic differential equation (MR-SDE for short), established the well-posedness of the solution, and analyzed the propagation of chaos. They also proposed a numerical simulation method based on particle approximation techniques. \cite{briandMEAN} extended this work to L\'evy driven MR-SDE. \cite{falkowski2021mean} examined the MR-SDE with two reflective walls and solved the Skorokhod problem with the mean minimum condition. They also studied the stability of the solution. 
In a related study, \cite{briand2020forward} investigated SDE with normal reflection in conditional law and BSDE with normal constraint in law. Using the penalty method, they established the well-posedness of these two methods and proved the forward chaos propagation and backward propagation of chaos \citep{lauriere2022backward}. Additionally, they studied a related obstacle problem in the Wasserstein space. Overall, these studies contribute to a better understanding of MR-BSDE and MR-SDE and their applications in various fields.

The LDP is primarily concerned with asymptotic estimates of the probability of tail events  $A$, especially whether $\p\kuo{X_\varepsilon \in A}$ converges to zero at an exponential rate as $\varepsilon \rightarrow 0$. In this paper, we focus on the Freidlin-Wentzell type of LDP. 
The study of LDP for SDE originates from \cite{Freidlin1984RandomPO}. 
Recently, the weak convergence method \cite{10.1214/07-AOP362} based on the variational representation of Brownian motion \citep{Bou1998AVR} has become an important and effective tool for proving LDP. Using this method, many LDP for SDE and SPDE have been established, see e.g. \citep{jacquier2022large,Rckner2010LargeDF,chiarini2014Large,Zhai2015LargeDF,Ren2010LargeDF,Fatheddin2012LargeDP}. For more on the weak convergence method and LDP, see monograph \citep{budhiraja2019Analysis}. 
\cite{Matoussi2017LargeDP} proposed sufficient conditions for verifying LDP that are easier to verify and used these conditions to prove LDP for quasilinear SPDE with reflection. Currently, there are relatively few studies on MV-SDE. Using the classical exponential equivalence method, \cite{10.1214/18-AAP1416} and \cite{10.1214/07-AAP489} proved LDP for MV-SDE. For reflected MV-SDE, \cite{adams2022large} used the exponential equivalence method to establish the LDP. 
Notably, \cite{liu2022large} proposed a new sufficient condition to verify the criteria of the weak convergence method, and show correct controlled equation for the MV-SDE with jumps, which was used to prove the large deviation principle and moderate deviation principle for the MV-SDE with jumps. 
\cite{li2018large} using weak convergence method shows the LDP of MR-SDE with jump.

The invariant probability measure is an important tool for studying the long-term behavior of SDE. The principle of invariant probability measure for classical SDE or distribution-independent SDE has been extensively studied, as seen in \citep{da1996ergodicity, Qiao2012ExponentialEF, zhang2009exponential, Ren2010ExponentialEO, kulik2009exponential}. 
However, due to the fact that the MV-SDE is not a Markov process, the classical proof method cannot be used. 
\cite{wang2018distribution} first established the $\mathcal{W}_2$-exponential convergence of the solution of the MV-SDE, and subsequently proved the uniqueness and existence of the invariant probability measure. 
\cite{10.1214/20-AIHP1123} studied the exponential ergodicity of MV-SDE with additive pure jump noise in the relative entropy and (weighted) Wasserstein distances. 
\cite{wang2021exponential} studied the exponential ergodicity of singular MV-SDE with or without reflection in the relative entropy and (weighted) Wasserstein distances. \cite{ren2021donsker} studied the Donsker-Varadhan LDP of the invariant measure for path-dependent McKean-Vlasov SPDE.

The Harnack inequality is an important inequality for studying the properties of solutions
 {which implies gradient estimates (hence, the strong Feller property), the uniqueness of invariant probability measures, heat kernel estimates, and the irreducibility of the associated Markov semigroups.}
\cite{Wang1997OnEO} first proposed a dimension-free Harnack inequality for a diffusion process on Riemannian manifolds. If we denote $\left\{P_t\right\}_{t \geq 0}$ is the transition semigroup of a Markov process, the dimension-free Harnack inequality states that  
$$
\left(P_t f\right)^\alpha(x) \leq P_t f^\alpha(x+y) \exp \left\{C_\alpha(y, t)\right\}, x, y \in \mathbb{R}^d,
$$
where $\alpha>1$, $C_\alpha: \mathbb{R}^d \times(0, \infty) \rightarrow \mathbb{R}_{+}$ satisfies $C_\alpha(0, t)=0$, and $x, y \in \mathbb{R}^d$. \cite{WFY_Harnack_AOP} proved the dimension-free Harnack inequality for a large class of stochastic differential equations with multiplicative noise by constructing a coupling with unbounded time-dependent drift. 

This coupling method, also known as the coupling by change measure method, has been used to prove the Harnack inequality for many SDE and stochastic partial differential equation (SPDE for short), see e.g., \citep{10.1214/009117906000001204, niu2019wang, Wang2012HarnackIF, Wang2015HarnackIF, wang2016gradient} and monograph \cite{wang2013harnack}. 
%For the MV-SDE, due to technical difficulties, we can only prove the Harnack inequality when the noise coefficient is independent of the distribution. 
\cite{wang2018distribution} used the coupling by change measure method to prove the Harnack inequality for the MV-SDE when the noise coefficient is independent of the distribution. 
\cite{ren2023singular} studied the Harnack inequality when the drift contains a locally standard integrable term and a Lipschitz continuous term in the spatial variable and is Lipschitz continuous in the distribution variable with respect to a weighted variation distance. 
 \cite{huang2022log} studied the Harnack inequality for MV-SDE, where the drift satisfies a standard integrability condition in time-space and may be discontinuous in the distance induced by any Dini function. 
 \cite{Wang2012IntegrationBP} developed a novel coupling method to effectively prove the shift Harnack inequality. 
 %Utilizing this new coupling method, we prove the shift Harnack inequality for the McKean-Vlasov SDE when the noise coefficient is independent of the spatial variable. 
 \cite{huang2021well} proved the shift Harnack inequality for the McKean-Vlasov SPDE in a separable Hilbert space with a Dini continuous drift. 
 \cite{huang2019distribution} used Zvonkin's transform to prove the Harnack inequality and shift Harnack inequality for the MV-SDE when the coefficients are Dini continuous in the spatial variable.

\emph{Our contributions.}

To the best of our knowledge, there are few results on the MR-MVSDE. 
The aim of this paper is to investigate this type of MR-MVSDE. 
In Section \ref{sec:3}, we first establish the existence and uniqueness of a strong solution $(X, K)$ for \eqref{eq:1.1}, followed by a study of its properties, including: 
\begin{enumerate}[(i)]
	\item the relationship between \eqref{eq:1.1} and a PDE with Neumann boundary conditions in the Wasserstein space, i.e., the Feynman-Kac formula (Proposition \ref{pro:F-K}); 
	\item $L^p$ estimates for the solution (Proposition \ref{pro:3.2}); 
	\item the absolute continuity of the Stieljes measure $dK$ with respect to the Lebesgue measure, under certain regularity assumptions on $h$, and an explicit expression for the density (Proposition \ref{pro:3.3}).
\end{enumerate}

In Section \ref{sec:4}, we study the propagation of chaos (Theorem \ref{thm:poc}), i.e., we prove the convergence of an interacting particle system (see \eqref{eq:particle}) to \eqref{eq:1.1}.
% TODO provide a numerical method based on this result. 

In Section \ref{sec:5}, we investigate the impact of small perturbations on the system. Specifically, we study the effects of perturbations in the initial condition (Theorem \ref{thm:sric}), the coefficients (Theorem \ref{thm:src}), and the driving process (Theorem \ref{thm:srdp}). We then use Proposition \ref{pro:LDP1} and Proposition \ref{pro:LDP2} to prove a LDP for \eqref{eq:1.1} (Theorem \ref{thm:LDP}).

In Section \ref{sec:6}, we focus on the long-time behavior of solutions and the Harnack inequality. We prove the existence and uniqueness of an invariant measure for \eqref{eq:1.1} (Theorem \ref{thm:eipm}). Using  the method of coupling by change measure, we prove 
log-Harnack inequality (Theorem \ref{thm:lhi}) and shift Harnack inequality (Theorem \ref{thm:shi}) for \eqref{eq:1.1}.

The paper is organized in the following way: Section \ref{sec:2} introduces the Wasserstein metric and presents some Assumptions and preliminary results. Section \ref{sec:3} shows that the MR-MVSDE admits a unique strong solution $(X,K)$. Then we study the density of $K$.  Section \ref{sec:4} shows that system \eqref{eq:1.1} can be seen as the limit of an interacting particles system with oblique reflection of mean field type. Section \ref{sec:5} studies the effect of small noise on the system. Section \ref{sec:6} proves the existence of invariant probability measure and the Harnack inequality.

Throughout the paper, we adopt the following convention: $\mathcal{B}^+_b(\mathbb{R})$ refers to the space of nonnegative and bounded measurable functions on $\mathbb{R}$. The symbol $C$ denotes various positive constants whose specific values may vary in different contexts. 
%\ref{sec:4} shows the  stability properties of \eqref{eq:1.1} with respect to initial condition and coefficients. Section \ref{sec:5} use weak convergence method to prove the LDP of \eqref{eq:1.1}.
\section{Preliminary}\label{sec:2}
In this section, we primarily focus on the Wasserstein space and some preliminary results.
\subsection{The Wasserstein space}
% TODO 介绍Wasserstein距离与概率空间
Let $\mathcal{P}\left(\mathbb{R}\right)$ be the space of probability measures on $\mathbb{R}$ and for any $p>1$ denote by $\mathcal{P}_p\left(\mathbb{R}\right)$ the subspace of $\mathcal{P}\left(\mathbb{R}\right)$ of probability measures with finite moment of order $p$. Now, we can defined $p$-Wasserstein distance 
For $\mu, \nu \in \mathcal{P}_p\left(\mathbb{R}\right)$,  the $p$-Wasserstein distance $\mathcal{W}_p(\mu, \nu)$ is
$$
\mathcal{W}_p(\mu, \nu)=\inf _{\pi \in \Pi(\mu, \nu)}\left[\int_{\mathbb{R} \times \mathbb{R}}|x-y|^p d \pi(x, y)\right]^{1 / p}\,,
$$
where $\Pi(\mu, \nu)$ denotes the set of probability measures on $\mathbb{R} \times \mathbb{R}$ whose first and second marginals are, respectively, $\mu$ and $\nu$. An equivalent and more probabilistic definition is 
$$
\mathcal{W}_p(\mu,\nu) = \kuo{\inf_{X\sim \mu, Y \sim \nu}\e\zkuo{\abs{X-Y}^p}}^{\frac{1}{p}}\leq \e\zkuo{\abs{X-Y}^p}^{\frac{1}{p}}\,.
$$
In the following result, we mainly use $1$-Wasserstein distance and $2$-Wasserstein distance. More detail about the Wasserstein metric can be seen \cite{villani2021topics} and references therein.
{
Then, we introduce the Lions' derivative (L-derivative) on $\mathcal{P}_2(\mathbb{R})$.}

{For any $\mu \in \mathcal{P}_2(\mathbb{R})$, let }
{$$
\begin{aligned}
T_{\mu, 2} & :=L^2\left(\mathbb{R}, \mathbb{R} ; \mu\right) \\
& :=\left\{\phi: \mathbb{R} \rightarrow \mathbb{R} ; \phi \text { is measurable with } \mu\left(|\phi|^2\right):=\int_{\mathbb{R}}|\phi(x)|^2 \mu(\mathrm{d} x)<\infty\right\},
\end{aligned}
$$}
{and its norm as following
$$
\|\phi\|_{T_{\mu, 2}}^2:=\int_{\mathbb{R}^d}|\phi|^2 \mu(\mathrm{d} x), \text { for } \phi \in T_{\mu, 2}.
$$
}
{Now, we can define the intrinsic derivative and L-differentiable.}
\begin{definition}[Intrinsic derivative and L-differentiable]
{Let $f: \mathcal{P}_2\left(\mathbb{R}\right) \mapsto \mathbb{R}$ be a continuous function, and $I$ be the identity map on $\mathbb{R}^d$.}
\begin{enumerate}[(i)]
	\item {If for any $\mu \in \mathcal{P}_2\left(\mathbb{R}\right)$
$$
T_{\mu, 2} \ni \phi \rightarrow D_\phi^L f(\mu):=\lim _{\varepsilon \rightarrow 0} \frac{f\left(\mu \circ(I+\varepsilon \phi)^{-1}\right)-f(\mu)}{\varepsilon} \in \mathbb{R}
$$
is a well-defined bounded linear functional, then we say that $f$ is intrinsically differentiable at $\mu$ and denote the intrinsic derivative of $f$ at $\mu$ by $D_\phi^L f(\mu)$.}

	\item {If for any $\mu \in \mathcal{P}_2\left(\mathbb{R}\right)$}
$$
\lim _{\|\phi\|_{\mu, 2} \rightarrow 0} \frac{f\left(\mu \circ(I+\varepsilon \phi)^{-1}\right)-f(\mu)-D_\phi^L f(\mu)}{\varepsilon}=0,
$$
we say that $f$ is L-differentiable at $\mu$ and the denote the L-derivative (i.e. Lions derivative) of $f$ at $\mu$ by $D_\mu f(\mu)$.
\end{enumerate}

\end{definition}
{By the Riesz representation theorem, we know that
$$
\left\langle D_\mu  f(\mu), \phi\right\rangle_{T_{\mu, 2}}:=\int_{\mathbb{R}}\left\langle D^L f(\mu)(x), \phi(x)\right\rangle \mu(\mathrm{d} x)=D_\phi^L f(\mu), \quad \phi \in T_{\mu, 2} .
$$
}

In order to obtain the Feynman-Kac formula for \eqref{eq:1.1}, it is necessary to make certain regularity assumptions on function $f$ and apply a special version of It\^{o}'s lemma, as demonstrated in the work of \cite{briand2020forward}.
\begin{definition}	
We say $f$ is partially $\mathcal{C}^2$ if the map $y \rightarrow$ $D_\mu f(\mu)(y)$ is continuously differentiable with a derivative $(\mu, y) \rightarrow \partial_y D_\mu f(\mu)(y)$ jointly continuous on $\mathcal{P}_2\left(\mathbb{R}\right) \times \mathbb{R}$.
\end{definition}
\begin{lemma}[It\^{o}'s lemma]\label{lem:ito}
	Consider following process
	$$
d X_t=b_t d t+\sigma_t d B_t+ d K_t\,,
$$
where $K_t$ is a deterministic, 
continuous and nondecreasing process with $K_0 = 0$, 
$\left(b_t\right)_{t\geq0}$ and $\left(\sigma_t\right)_{t\geq 0}$ are progressively measurable with values in $\mathbb{R}$ and satisfy following integrable condition:
$$
\mathbb{E}\left[\int_0^T\left(\left|b_s\right|^2+\left|\sigma_s\right|^4\right) d s\right]<+\infty\,.
$$
If $f$ is partially $\mathcal{C}^2$, we have 
$$
\begin{gathered}
f\left(\law{X_t}\right)=f\left(\law{X_0}\right)+\int_0^t \mathbb{E}\left[D_\mu f\left(\law{X_t}\right)\left(X_s\right)b_s\right] d s+\int_0^t \mathbb{E}\left[D_\mu f\left(\law{X_t}\right)\left(X_s\right)\right] d K_s \\
+\frac{1}{2} \int_0^t \mathbb{E}\left[\left(\sigma_s^2\partial_y D_\mu f\left(\law{X_t}\right)\left(X_s\right)\right)\right] d s.
\end{gathered}
$$
\end{lemma}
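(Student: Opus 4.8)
The plan is to reduce the statement to the classical It\^{o} formula along a flow of marginal laws for an It\^{o} process \emph{without} the singular term $K$, and then to recover the contribution of $dK$ by a mollification argument that uses only that $K$ is deterministic, continuous and of finite variation. \emph{Step 1 (mollifying $K$).} Fix $T>0$. Since $K$ is deterministic, continuous and nondecreasing with $K_0=0$, pick deterministic nondecreasing $\mathcal{C}^1$ functions $K^n$ with $K^n_0=0$, $\dot K^n$ bounded on $[0,T]$, and $\sup_{s\le T}\abs{K^n_s-K_s}\to 0$ (for instance the moving averages $K^n_t=n\int_{(t-1/n)^+}^t K_r\,dr$, which are nondecreasing and converge uniformly by uniform continuity of $K$). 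Put $X^n_t:=X_t-K_t+K^n_t$, so that $dX^n_t=(b_t+\dot K^n_t)\,dt+\sigma_t\,dB_t$ is a genuine It\^{o} process with initial value $X_0$, whose coefficients still satisfy the integrability hypothesis on $[0,T]$ because $\dot K^n$ is bounded.

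\emph{Step 2 (the formula without reflection).} Apply to $f(\law{X^n_t})$ the It\^{o} formula along flows of marginal laws for partially $\mathcal{C}^2$ functions --- this is exactly the $K\equiv 0$ instance of the present statement, cf. \cite{briand2020forward} --- which yields
$$f(\law{X^n_t})=f(\law{X_0})+\int_0^t\e\zkuo{D_\mu f(\law{X^n_s})(X^n_s)(b_s+\dot K^n_s)}\,ds+\frac12\int_0^t\e\zkuo{\sigma_s^2\,\partial_y D_\mu f(\law{X^n_s})(X^n_s)}\,ds.$$
Since $s\mapsto X^n_s$ is continuous in $L^2(\Omega)$ and $(\mu,y)\mapsto D_\mu f(\mu)(y)$ is jointly continuous, the \emph{deterministic} map $s\mapsto\e[D_\mu f(\law{X^n_s})(X^n_s)]$ is continuous, so the term carrying $\dot K^n$ may be rewritten as the Riemann--Stieltjes integral $\int_0^t\e[D_\mu f(\law{X^n_s})(X^n_s)]\,dK^n_s$.

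\emph{Step 3 (passage to the limit).} Because $\sup_{s\le T}\abs{X^n_s-X_s}=\sup_{s\le T}\abs{K^n_s-K_s}\to 0$, we have $\mathcal{W}_2(\law{X^n_s},\law{X_s})\le\norm{X^n_s-X_s}_{L^2}\to 0$ uniformly in $s\in[0,T]$. Using this, the continuity of $f$ on $\mathcal{P}_2(\mathbb{R})$, the joint continuity of $D_\mu f$ and $\partial_y D_\mu f$, and the $L^2$-bounds on the flows (uniform in $n$, since $\sup_{s\le T}\abs{K^n_s}$ is), we let $n\to\infty$ in Step 2: the left-hand side tends to $f(\law{X_t})$; the two Lebesgue integrals tend to their analogues with $X$ in place of $X^n$ by dominated convergence (here the hypothesis $\e\int_0^T(\abs{b_s}^2+\abs{\sigma_s}^4)\,ds<\infty$ supplies an integrable, $n$-uniform majorant, the fourth power of $\sigma$ being what controls the $\sigma_s^2\,\partial_y D_\mu f$ term); and, writing $g^n(s):=\e[D_\mu f(\law{X^n_s})(X^n_s)]$ and $g(s):=\e[D_\mu f(\law{X_s})(X_s)]$,
$$\abs{\int_0^t g^n\,dK^n-\int_0^t g\,dK}\le\norm{g^n-g}_{L^\infty([0,T])}K^n_T+\abs{\int_0^t g\,dK^n-\int_0^t g\,dK}\longrightarrow 0,$$
since $\norm{g^n-g}_\infty\to 0$ and $dK^n\to dK$ weakly on $[0,T]$ (as $K^n\to K$ pointwise with $K^n,K$ nondecreasing and $K^n_T\to K_T$, while $g$ is continuous and bounded on $[0,T]$). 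Collecting the limits gives the asserted identity, and $T>0$ was arbitrary.

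\emph{Main obstacle.} The real content is Step 2, the It\^{o} formula along a flow of marginal laws for \emph{merely} partially $\mathcal{C}^2$ test functions. Proved from scratch it rests on a time discretization $0=t_0<\cdots<t_m=t$, a first-order expansion of $f$ along the linear interpolation between $\law{X^n_{t_k}}$ and $\law{X^n_{t_{k+1}}}$, and a second-order Taylor expansion of $y\mapsto D_\mu f(\nu)(y)$ in the space variable; the delicate part is the remainder estimate. One must use that the martingale increments $\int_{t_k}^{t_{k+1}}\sigma_s\,dB_s$ are centered given $\mathcal{F}_{t_k}$, so the first-order-in-the-martingale contributions cancel \emph{exactly} --- this is essential, since $\sum_k\norm{X^n_{t_{k+1}}-X^n_{t_k}}_{L^1}$ does not vanish with the mesh; that at second order only the bracket survives, producing the $\sigma_s^2\,\partial_y D_\mu f$ term; and that no $\partial^2_\mu f$-term appears because the driving noise is idiosyncratic --- which is precisely why partial $\mathcal{C}^2$ regularity, rather than full $\mathcal{C}^2$ regularity in the measure variable, is enough. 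The condition $\e\int_0^T\abs{\sigma_s}^4\,ds<\infty$ enters exactly here, to dominate the $\e\abs{X^n_{t_{k+1}}-X^n_{t_k}}^4$-type remainders. Alternatively one may simply invoke this It\^{o} formula from \cite{briand2020forward}, so that the only work left is the routine limit passage of Step 3.
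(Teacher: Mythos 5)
Your proposal is essentially correct, but it follows a different route from the paper: the paper gives no proof at all of Lemma \ref{lem:ito} --- it imports it from \cite{briand2020forward}, where the It\^{o} formula along a flow of marginal laws is established \emph{directly} for processes already containing the deterministic finite-variation term $K$ (via the time-discretization argument you sketch as the ``main obstacle''). Your reduction --- mollify $K$ into $C^1$ nondecreasing $K^n$, apply the $K\equiv 0$ instance of the formula to $X^n_t=X_t-K_t+K^n_t$, rewrite the $\dot K^n$ term as a Stieltjes integral, and pass to the limit using the deterministic uniform convergence $\sup_{s\le T}|X^n_s-X_s|=\sup_{s\le T}|K^n_s-K_s|\to 0$ --- is a legitimate alternative that buys a self-contained derivation of the reflected formula from the unreflected one, and your observation that no second-order measure derivative appears because the noise is idiosyncratic is the correct explanation of why partial $\mathcal{C}^2$ regularity suffices. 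Two caveats. First, the limit passages in Step 3 (the claim $\|g^n-g\|_{L^\infty([0,T])}\to 0$, the continuity of $g$, and the dominated-convergence step for the two Lebesgue integrals) do not follow from joint continuity of $D_\mu f$ and $\partial_y D_\mu f$ alone: since $X^n_s$ is unbounded, you need some locally uniform growth or uniform-integrability control of $D_\mu f(\mu)(y)$ and $\partial_y D_\mu f(\mu)(y)$ along the (relatively compact in $\mathcal{W}_2$) family of laws involved; this is the same kind of technical hypothesis under which the cited formula is proved, so it should be stated rather than attributed to the bare integrability of $b$ and $\sigma$. Second, your limit produces $D_\mu f(\mathcal{L}_{X_s})(X_s)$ inside the integrals, i.e. the law evaluated at the running time $s$; the occurrence of $\mathcal{L}_{X_t}$ in the paper's display is a typo (the form with $\mathcal{L}_{X_s}$ is the one actually used in the proof of Proposition \ref{pro:F-K}), so your statement, not the displayed one, is the intended conclusion.
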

\subsection{Assumptions and preliminary results}
We make following assumptions about the coefficients:
\begin{assumption}[About $b$, $\sigma$ and $\xi$]\label{ass:A_1}
\begin{enumerate}[(i)]
	\item Lipschitz assumption: there exists a constant $K>0$ such that for all $x,y \in \mathbb{R}$, $\mu,\nu \in \mathcal{P}_2(\mathbb{R})$ we have 
	$$
	|b(x,\mu) - b(y,\nu)| + |\sigma(x,\mu)- \sigma(y,\nu)|\leq K\kuo{|x - y| + \mathcal{W}_2(\mu,\nu)}\,.
	$$
	\item The random variable $\xi$ is square integrable, i.e. $\e\zkuo{\xi^2}<\infty$ and satisfies $\e\zkuo{h(\xi)}\geq 0$.
\end{enumerate}
\end{assumption}
\begin{assumption}[More about $\xi$]\label{ass:xi_4}
	There exists a $p>4$ such that $\e |\xi|^p<\infty$.
\end{assumption}
\begin{assumption}[About $h$]\label{ass:A_2}
The function $h : \mathbb{R}\rightarrow \mathbb{R}$ is increasing and bi-Lipschitz. There exist $0<m\leq M$ such that 
	$$
	\forall x \in \mathbb{R}, \forall y \in \mathbb{R}, \quad m|x-y|\leq |h(x)-h(y)|\leq M|x-y|\,.
	$$
\end{assumption}

\begin{assumption}[About the regularity of $h$]\label{ass:A_4}
	The function $h$ is twice continuously differentiable with bounded derivatives.
\end{assumption}
\begin{assumption}[Dissipative condition]\label{ass:A_5}
	For some constants $C_2 >C_1 \geq 0$, 
$$
2(x-y)(b(x,\mu) - b(y,\nu)) + |\sigma(x,\mu) - \sigma(y,\nu)| \leq C_1 \mathcal{W}_2(\mu,\nu)^2 -C_2 |x-y|^2\,.
$$
\end{assumption}

\begin{remark}
{
The Assumptions \ref{ass:A_1}-\ref{ass:A_5} will be used as follows:}

	{The Assumption \ref{ass:A_1} which is the standard assumption for MV-SDE and Assumption \ref{ass:A_2} used in \cite{briand2020particles,briand2018bsdes,falkowski2021mean,briandMEAN} ensure the well-posedness of \eqref{eq:1.1}. 
The  Assumption \ref{ass:A_4} will be use in show the Proposition \ref{pro:3.3}.}

{Assumption \ref{ass:A_5} is a dissipative condition which will be used to show the existence and uniqueness of invariant probability measure when $t\rightarrow \infty$. }

\end{remark}
We now introduce the functions $H$, $\bar{G}_0$, and $G$ which plays an important role in our paper and give some properties from \cite{briand2020particles}.
\begin{equation*}
H: \mathbb{R} \times \mathcal{P}_1(\mathbb{R}) \ni(x, v) \mapsto \int h(x+z) v(d z)\,.
\end{equation*}

Let $\bar{G}_0$ be the inverse function in space of $H$ evaluated at 0 ,
\begin{equation*}
\bar{G}_0: \mathcal{P}_1(\mathbb{R}) \ni v \mapsto \inf \{x \in \mathbb{R}: H(x, v) \geq 0\}\,,
\end{equation*}
and let $G_0$ denote the positive part of $\bar{G}_0$,
\begin{equation*}
\begin{aligned}
	 G_0: \mathcal{P}_1(\mathbb{R}) \ni \nu \mapsto \inf \{x \geq 0: H(x, v) \geq 0\} \,.
\end{aligned}
\end{equation*}

\begin{lemma}\label{lem:2.1}
	Under Assumption \ref{ass:A_2} we have:
	\begin{enumerate}[(i)]
		\item  For all $\nu$ in $\mathcal{P}_1(\mathbb{R})$, the mapping $H(\cdot, \nu): \mathbb{R} \ni x \mapsto H(x, \nu)$ is a bi-Lipschitz function, namely:
$$
\forall x, y \in \mathbb{R}, \quad m|x-y| \leq|H(x, \nu)-H(y, \nu)| \leq M|x-y| .
$$
	\item For all $x$ in $\mathbb{R}$, the mapping $H(x, \cdot): \mathcal{P}_1(\mathbb{R}) \ni \nu \mapsto H(x, \nu)$ satisfies the following Lipschitz estimate:
$$
\forall v, v^{\prime} \in \mathcal{P}_1(\mathbb{R}), \quad\left|H(x, \nu)-H\left(x, \nu^{\prime}\right)\right| \leq\left|\int h(x+\cdot)\left(d \nu-d \nu^{\prime}\right)\right|\,.
$$
	\end{enumerate}
\end{lemma}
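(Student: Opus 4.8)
The plan is to prove both parts by direct manipulation of the defining integral $H(x,\nu)=\int_{\mathbb{R}}h(x+z)\,\nu(dz)$, using only the bi-Lipschitz and monotonicity properties of $h$ from Assumption \ref{ass:A_2} together with the fact that $\nu$ is a probability measure. No fixed-point or compactness argument is needed; the whole statement reduces to elementary estimates on the integrand.

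For (i), fix $\nu\in\mathcal{P}_1(\mathbb{R})$ and $x,y\in\mathbb{R}$, and write
$$
H(x,\nu)-H(y,\nu)=\int_{\mathbb{R}}\bigl(h(x+z)-h(y+z)\bigr)\,\nu(dz).
$$
The key observation is that, since $h$ is increasing, for every fixed $z$ the sign of $h(x+z)-h(y+z)$ depends only on the sign of $x-y$ and not on $z$; hence the integrand has constant sign and one may move the absolute value inside the integral:
$$
\abs{H(x,\nu)-H(y,\nu)}=\int_{\mathbb{R}}\abs{h(x+z)-h(y+z)}\,\nu(dz).
$$
Now apply the pointwise bi-Lipschitz bound $m|x-y|\le\abs{h(x+z)-h(y+z)}\le M|x-y|$ (which is just Assumption \ref{ass:A_2} applied to the points $x+z$ and $y+z$) and integrate against the probability measure $\nu$, whose total mass is one, to obtain $m|x-y|\le\abs{H(x,\nu)-H(y,\nu)}\le M|x-y|$.

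For (ii), fix $x\in\mathbb{R}$ and $\nu,\nu'\in\mathcal{P}_1(\mathbb{R})$. By linearity of the integral,
$$
H(x,\nu)-H(x,\nu')=\int_{\mathbb{R}}h(x+z)\,\nu(dz)-\int_{\mathbb{R}}h(x+z)\,\nu'(dz)=\int_{\mathbb{R}}h(x+\cdot)\,(d\nu-d\nu'),
$$
and taking absolute values gives the claimed estimate (in fact an equality). Here one should record that all these integrals are well defined and finite: $h$ is Lipschitz, hence of at most linear growth, and $\nu,\nu'\in\mathcal{P}_1(\mathbb{R})$ have finite first moments.

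There is essentially no serious obstacle. The only point requiring genuine care is the constant-sign argument in (i) that justifies interchanging the absolute value with the integral: without monotonicity of $h$ one would recover only the upper Lipschitz bound $\abs{H(x,\nu)-H(y,\nu)}\le M|x-y|$ and would lose the lower bound, which is precisely the ingredient that later guarantees invertibility of $H(\cdot,\nu)$ and hence well-posedness of $\bar G_0$. Everything else is the triangle inequality, linearity of the integral, and normalization of $\nu$.
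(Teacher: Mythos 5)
Your proof is correct and takes essentially the same route as the paper's appendix proof: direct estimation of the defining integral $H(x,\nu)=\int h(x+z)\,\nu(dz)$ using the bi-Lipschitz bound on $h$ and the fact that $\nu$ is a probability measure. You are in fact more careful than the paper on the one delicate point, namely that the lower bound $m|x-y|\le |H(x,\nu)-H(y,\nu)|$ requires the monotonicity of $h$ (constant sign of the integrand) to move the absolute value inside the integral, a step the paper's proof performs implicitly.
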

\begin{lemma}\label{lem:2.2}
 Under Assumption \ref{ass:A_2}, the mapping $G_0: \mathcal{P}_1(\mathbb{R}) \ni \nu \mapsto G_0(\nu)$ is Lipschitz continuous in the following sense:
$$
\left|G_0(\nu)-G_0\left(\nu^{\prime}\right)\right| \leq \frac{1}{m}\left|\int h\left(\bar{G}_0(\nu)+\cdot\right)\left(d \nu-d \nu^{\prime}\right)\right|\,,
$$
where we recall that $\bar{G}_0(\nu)$ is the inverse of $H(\cdot, \nu)$ at point $0$. In particular,
$$
\left|G_0(\nu)-G_0\left(\nu^{\prime}\right)\right| \leq \frac{M}{m} \mathcal{W}_1\left(\nu, \nu^{\prime}\right)\,.
$$
\end{lemma}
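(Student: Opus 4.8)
The plan is to work directly from the definitions of $\bar G_0$ and $G_0$, together with the two conclusions of Lemma \ref{lem:2.1}. First I would record the defining property of $\bar G_0(\nu)$: since $H(\cdot,\nu)$ is continuous (indeed bi-Lipschitz by Lemma \ref{lem:2.1}(i)) and the infimum in $\bar G_0(\nu)=\inf\{x:H(x,\nu)\ge 0\}$ is attained, we have $H(\bar G_0(\nu),\nu)=0$, and likewise $H(\bar G_0(\nu'),\nu')=0$. Fix $\nu,\nu'\in\mathcal P_1(\mathbb R)$ and, without loss of generality, assume $\bar G_0(\nu')\ge\bar G_0(\nu)$ (otherwise swap the roles). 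The key estimate I want is on $\bar G_0(\nu')-\bar G_0(\nu)$: using the lower bound in Lemma \ref{lem:2.1}(i) at the point $\bar G_0(\nu)$,
$$
m\,\bigl|\bar G_0(\nu')-\bar G_0(\nu)\bigr|\le \bigl|H(\bar G_0(\nu'),\nu)-H(\bar G_0(\nu),\nu)\bigr| = \bigl|H(\bar G_0(\nu'),\nu)\bigr|,
$$
since $H(\bar G_0(\nu),\nu)=0$. Now $H(\bar G_0(\nu'),\nu') = 0$ as well, so $H(\bar G_0(\nu'),\nu)=H(\bar G_0(\nu'),\nu)-H(\bar G_0(\nu'),\nu')$, and Lemma \ref{lem:2.1}(ii) applied at $x=\bar G_0(\nu')$ bounds this by $\bigl|\int h(\bar G_0(\nu')+\cdot)(d\nu-d\nu')\bigr|$. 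To get the asserted form of the inequality with $\bar G_0(\nu)$ inside the integral rather than $\bar G_0(\nu')$, I would instead start the argument symmetrically: one can always evaluate the bi-Lipschitz bound of Lemma \ref{lem:2.1}(i) and the difference $H(\bar G_0(\nu),\nu)-H(\bar G_0(\nu),\nu')$ at the common point $\bar G_0(\nu)$, i.e. write
$$
m\,\bigl|\bar G_0(\nu')-\bar G_0(\nu)\bigr| \le \bigl|H(\bar G_0(\nu),\nu')-H(\bar G_0(\nu),\nu)\bigr| + \text{(a telescoping term that vanishes)},
$$
using $H(\bar G_0(\nu),\nu)=0=H(\bar G_0(\nu'),\nu')$; after rearranging, $\bigl|\bar G_0(\nu')-\bar G_0(\nu)\bigr|\le \frac1m\bigl|\int h(\bar G_0(\nu)+\cdot)(d\nu-d\nu')\bigr|$ by Lemma \ref{lem:2.1}(ii).

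Next I would pass from $\bar G_0$ to $G_0$: since $G_0(\nu)=\bar G_0(\nu)^+ = \max(\bar G_0(\nu),0)$, the map $x\mapsto x^+$ is $1$-Lipschitz, so $|G_0(\nu)-G_0(\nu')|\le|\bar G_0(\nu)-\bar G_0(\nu')|$, and the first displayed inequality of the lemma follows immediately from the bound just obtained. For the second, cruder inequality, I would bound $\bigl|\int h(\bar G_0(\nu)+\cdot)(d\nu-d\nu')\bigr|$ by noting that $z\mapsto h(\bar G_0(\nu)+z)$ is $M$-Lipschitz (Assumption \ref{ass:A_2}), hence by the Kantorovich--Rubinstein dual characterization of $\mathcal W_1$ (or directly via a coupling: take $X\sim\nu$, $Y\sim\nu'$ optimal for $\mathcal W_1$ and estimate $|\mathbb E[h(\bar G_0(\nu)+X)]-\mathbb E[h(\bar G_0(\nu)+Y)]|\le M\,\mathbb E|X-Y|$), this integral is at most $M\,\mathcal W_1(\nu,\nu')$, giving $|G_0(\nu)-G_0(\nu')|\le\frac Mm\mathcal W_1(\nu,\nu')$.

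I do not expect a serious obstacle here; this is essentially bookkeeping with the two parts of Lemma \ref{lem:2.1}. The one point that requires a little care is making sure the integral that appears carries $\bar G_0(\nu)$ rather than $\bar G_0(\nu')$ — i.e. choosing which of the two defining identities $H(\bar G_0(\nu),\nu)=0$, $H(\bar G_0(\nu'),\nu')=0$ to subtract and at which point to apply the bi-Lipschitz lower bound — and, relatedly, keeping the absolute values honest so that the argument is genuinely symmetric in $\nu,\nu'$ and does not secretly assume one of $\bar G_0(\nu),\bar G_0(\nu')$ is the larger. A second minor point is justifying that the infimum defining $\bar G_0$ is attained and equals a zero of $H(\cdot,\nu)$, which follows from continuity of $H(\cdot,\nu)$ together with $H(x,\nu)\to\pm\infty$ as $x\to\pm\infty$ (a consequence of the bi-Lipschitz bound in Lemma \ref{lem:2.1}(i)).
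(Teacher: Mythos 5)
Your argument is correct and is essentially the standard proof of this lemma (the paper itself does not reprove it but quotes it from \cite{briand2020particles}, where the same reasoning is used): apply the lower bi-Lipschitz bound of Lemma \ref{lem:2.1}(i) with the measure $\nu'$ at the points $\bar G_0(\nu)$ and $\bar G_0(\nu')$, use $H(\bar G_0(\nu'),\nu')=0=H(\bar G_0(\nu),\nu)$ so that $m|\bar G_0(\nu)-\bar G_0(\nu')|\le |H(\bar G_0(\nu),\nu')-H(\bar G_0(\nu),\nu)|$ exactly (your ``telescoping term'' is identically zero), then Lemma \ref{lem:2.1}(ii) at $x=\bar G_0(\nu)$, the $1$-Lipschitz property of the positive part, and Kantorovich--Rubinstein duality for the $\frac{M}{m}\mathcal{W}_1$ bound. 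Your closing remarks on why $\bar G_0(\nu)$ is finite and is a zero of $H(\cdot,\nu)$ correctly handle the only points needing care.
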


\section{Existence, uniqueness and relative properties of the solution}\label{sec:3}
In this section, we show the existence and uniqueness of solutions to MR-MVSDE, and establish some properties of solution.
\subsection{Existence and Uniqueness of \eqref{eq:1.1}}
We now introduce the definition of the solution to MR-MVSDE \eqref{eq:1.1}.
\begin{definition}
 A couple of processes $(X, K)$ is said to be a flat deterministic solution to \eqref{eq:1.1} if $(X, K)$ satisfy \eqref{eq:1.1} with $K$ being a non-decreasing continuous deterministic function with $K_0=0$.
\end{definition}
Given this definition, we have the following result.
\begin{theorem}\label{thm:1}
	Under Assumptions \ref{ass:A_1} and \ref{ass:A_2}, the MR-MVSDE \eqref{eq:1.1} has a unique deterministic flat solution $(X,K)$. Moreover, 
\begin{equation*}
\forall t \geq 0, \quad K_t=\sup _{s \leq t} \inf \left\{x \geq 0: \mathbb{E}\left[h\left(x+U_s\right)\right] \geq 0\right\}=\sup _{s \leq t} G_0\left(\law{U_s}\right),
\end{equation*}
where $\kuo{U_t}_{0\leq t\leq T}$ is the process defined by :
\begin{equation}\label{eq:U}
\begin{aligned}
	  U_t = \xi + \int_0^t b(X_s,\mathcal{L}_{X_s})ds + \int_0^t \sigma(X_s,\mathcal{L}_{X_s})dB_s.
\end{aligned}
\end{equation}
With these notations, denoting by $\law{U_s}$ the family of marginal laws of $\kuo{U_t}_{0\leq t\leq T}$ we have 
$$
K_t = \sup_{s\leq t}G_0(\law{U_s}).
$$
\end{theorem}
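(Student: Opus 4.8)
The plan is to recast \eqref{eq:1.1} as a fixed-point problem on processes and to run a Picard/contraction argument in which Lemma~\ref{lem:2.2} controls the (deterministic) reflection term. Fix $T>0$ and let $\mathcal{S}_T^2$ be the space of $\mathbb{R}$-valued continuous $\{\mathcal{F}_t\}$-adapted processes $Y$ with $\e[\sup_{t\le T}|Y_t|^2]<\infty$. For $Y\in\mathcal{S}_T^2$ define $U_t^Y:=\xi+\int_0^t b(Y_s,\law{Y_s})\,ds+\int_0^t\sigma(Y_s,\law{Y_s})\,dB_s$, then $K_t^Y:=\sup_{s\le t}G_0(\law{U_s^Y})$, and finally $\Phi(Y)_t:=U_t^Y+K_t^Y$. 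Under Assumption~\ref{ass:A_1} the coefficients have at most linear growth, so $U^Y\in\mathcal{S}_T^2$ and $\law{U_s^Y}\in\mathcal{P}_1(\mathbb{R})$; path continuity of $U^Y$ and dominated convergence make $s\mapsto\law{U_s^Y}$ continuous for $\mathcal{W}_1$, and then Lemma~\ref{lem:2.2} makes $s\mapsto G_0(\law{U_s^Y})$, hence $K^Y$, continuous and nondecreasing. Since $\e[h(\xi)]\ge0$ gives $H(0,\law{\xi})\ge0$ we have $G_0(\law{\xi})=0$, i.e. $K_0^Y=0$. Thus $\Phi:\mathcal{S}_T^2\to\mathcal{S}_T^2$, and any fixed point $X$ of $\Phi$, together with $K:=K^X$ and $U:=U^X$, is a candidate deterministic flat solution with $X_t=U_t+K_t$ and $U,K$ as in the statement, provided the Skorokhod conditions hold.

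For the contraction, write $(U^i,K^i)$ for the objects attached to $Y^i\in\mathcal{S}_T^2$, $i=1,2$. The Lipschitz bound on $b,\sigma$ together with $\mathcal{W}_2(\law{Y_s^1},\law{Y_s^2})\le\e[|Y_s^1-Y_s^2|^2]^{1/2}$ and the Burkholder--Davis--Gundy inequality give $\e[\sup_{s\le t}|U_s^1-U_s^2|^2]\le C\int_0^t\e[\sup_{r\le s}|Y_r^1-Y_r^2|^2]\,ds$. For the reflection term, $K^1-K^2$ is deterministic and, by Lemma~\ref{lem:2.2},
\[
\sup_{s\le t}\abs{K_s^1-K_s^2}\le\frac{M}{m}\sup_{s\le t}\mathcal{W}_1(\law{U_s^1},\law{U_s^2})\le\frac{M}{m}\sup_{s\le t}\e\abs{U_s^1-U_s^2}\le\frac{M}{m}\,\e\Big[\sup_{s\le t}\abs{U_s^1-U_s^2}^2\Big]^{1/2}.
\]
Combining, $\e[\sup_{s\le t}|\Phi(Y^1)_s-\Phi(Y^2)_s|^2]\le C\int_0^t\e[\sup_{r\le s}|Y_r^1-Y_r^2|^2]\,ds$. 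Iterating this bound along the Picard sequence $X^0\equiv\xi$, $X^{n+1}:=\Phi(X^n)$ yields the factorial estimate $\e[\sup_{s\le T}|X_s^{n+1}-X_s^n|^2]\le C_0(CT)^n/n!$, so $(X^n)$ is Cauchy in $\mathcal{S}_T^2$ and its limit $X$ is the unique fixed point of $\Phi$ on $[0,T]$ (uniqueness follows from the same inequality and Gronwall's lemma). Since $T$ is arbitrary and a solution on $[0,T']$ restricts to a solution on $[0,T]$ for $T<T'$, this produces a unique pair $(X,K)$ on $[0,\infty)$ with $X_t=U_t+K_t$, $U$ as in \eqref{eq:U} and $K_t=\sup_{s\le t}G_0(\law{U_s})$.

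It remains to verify the Skorokhod conditions for this $(X,K)$ and to upgrade uniqueness to uniqueness among all deterministic flat solutions. Since $K_t$ is a deterministic number, $\e[h(X_t)]=\e[h(U_t+K_t)]=H(K_t,\law{U_t})$; by Lemma~\ref{lem:2.1}(i) the map $H(\cdot,\law{U_t})$ is increasing and vanishes at $\bar{G}_0(\law{U_t})$, while $K_t\ge G_0(\law{U_t})\ge\bar{G}_0(\law{U_t})$, so $\e[h(X_t)]\ge0$. The Stieltjes measure $dK$ is carried by the times $t$ at which the running supremum increases, where $K_t=G_0(\law{U_t})$; at any such $t$ one has $K_t>0$, hence $\bar{G}_0(\law{U_t})=G_0(\law{U_t})=K_t$ and $\e[h(X_t)]=H(\bar{G}_0(\law{U_t}),\law{U_t})=0$, so $\int_0^t\e[h(X_s)]\,dK_s=0$. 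Conversely, given any deterministic flat solution $(\tilde X,\tilde K)$, set $\tilde U_t:=\tilde X_t-\tilde K_t=\xi+\int_0^t b(\tilde X_s,\law{\tilde X_s})\,ds+\int_0^t\sigma(\tilde X_s,\law{\tilde X_s})\,dB_s$; the deterministic Skorokhod-type lemma of \cite{briand2020particles}, applied with the given continuous flow $(\law{\tilde U_s})_{s\ge0}$, shows that flat minimality forces $\tilde K_t=\sup_{s\le t}G_0(\law{\tilde U_s})$, i.e. $\Phi(\tilde X)=\tilde X$, and uniqueness of the fixed point gives $\tilde X=X$, whence $\tilde K=K$.

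The main obstacle is the interplay between the deterministic reflection term and the distribution dependence of the coefficients: unlike in the MR-SDE of \cite{briand2020particles}, here $\law{U_s}$ --- and therefore $K$ --- is itself a nonlinear functional of the unknown process, so the fixed-point map must simultaneously absorb the classical SDE estimate and the $\mathcal{W}_1$-Lipschitz estimate for $G_0$. Once Lemma~\ref{lem:2.2} is available this is routine; the only other delicate point is the deterministic reflection lemma invoked for uniqueness (and, implicitly, for the identification of $K$ as the minimal flat process), which is purely deterministic and carries over unchanged from \cite{briand2020particles}.
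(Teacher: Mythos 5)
Your proposal is correct and follows essentially the same route as the paper: both recast \eqref{eq:1.1} as a fixed point of the map $\tilde X\mapsto U^{\tilde X}+\sup_{s\le\cdot}G_0(\law{U^{\tilde X}_s})$, verify the constraint and flat-off conditions from the explicit $G_0$-representation of $K$, and obtain the fixed point from the Lipschitz assumptions, BDG, and Lemma~\ref{lem:2.2}. The only (minor) differences are technical conveniences on your side — a Picard iteration with factorial bounds on an arbitrary $[0,T]$ instead of the paper's small-time contraction followed by pasting, and a more explicit identification of $K$ for an arbitrary flat solution via the deterministic Skorokhod lemma of \cite{briand2020particles}, which the paper leaves implicit.
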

\begin{remark}
	From this construction, we deduce that for all positive $r$:
 \begin{align}
 \notag
    & K_{t+r}-K_t \\
     \notag
 &= \sup_{s\in [0,r]}\inf\dkuo{x\geq 0:\e\zkuo{h\kuo{x + X_t + \int_t^{t+s}b\kuo{X_u,\law{{X}_u}}du+\int_t^{t+s}\sigma\kuo{X_u,\law{{X}_u}}dB_u}}\geq 0
}	\,.
 \end{align}

\end{remark}

The following Proposition establishes the connection of  MR-MVSDE \eqref{eq:1.1} and a PDE with Neumann boundary condition on the Wasserstein space. 

Let $\mathcal{O}: = \dkuo{\mu \in \mathcal{P}_2(\mathbb{R}),\int h(x)\mu(dx)>0}$. Given a bounded, continuous map $G:\bar{\mathcal{O}} \rightarrow \mathbb{R}$ we consider the map $u: [0,T]\times \bar{\mathcal{O}} \rightarrow \mathbb{R}$ defined by 
\begin{equation}\label{eq:FK_u}
\begin{aligned}
	  u(t,\mu) = \e\zkuo{G(\mathcal{L}_{X_{t}})}\,.
\end{aligned}
\end{equation}

\begin{proposition}[Feynman-Kac formula]\label{pro:F-K}
Suppose that Assumptions \ref{ass:A_1} and \ref{ass:A_2} hold. 
	Assume that $u$ is a classical solution to Neumann problem on the Wasserstein space \eqref{eq:FK_f}. Then, $u$ is given by \eqref{eq:FK_u}.

\begin{equation}\label{eq:FK_f}
\left\{\begin{array}{l}
\text { (i) } \left(\partial_t+\mathcal{T}\right) u(t, \mu)=0 \quad \text { in }(0, T) \times \mathcal{O}, \\
\text { (ii) } \int_{\mathbb{R}} {D}_\mu u(t, \mu)(y)  \mu(d y)=0, \quad \text { in }(0, T)\times \partial \mathcal{O}, \\
\text { (iii) } u(T, \mu)=G(\mu), \quad \text { in } \mathcal{O},
\end{array}\right.
\end{equation}
where the operator $\mathcal{T}$ is given, for any smooth function $\phi:[0,T]\times \mathcal{P}_2(\mathbb{R})\rightarrow \mathbb{R}$, by
\begin{equation*}
\begin{aligned}
	  \mathcal{T}\phi(t,\mu) := \int_{\mathbb{R}}b(z,\mu) D_\mu\phi(t,\mu)(z)d\mu(z) + \frac{1}{2}\int_{\mathbb{R}}\sigma(z,\mu)^2\partial_z D_\mu \phi(t,\mu)(z) d\mu(z)\,.
\end{aligned}
\end{equation*}
\end{proposition}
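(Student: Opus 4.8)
The plan is to apply the It\^{o} formula of Lemma \ref{lem:ito} to the function $t \mapsto u(t,\mathcal{L}_{X_t})$ along the unique flat deterministic solution $(X,K)$ of \eqref{eq:1.1}, and then read off the boundary terms. First I would fix $t \in (0,T)$, start the equation at time $t$ with marginal law $\mathcal{L}_{X_t}$, and consider the process $s \mapsto u(s, \mathcal{L}_{X_s})$ for $s \in [t,T]$. Since $u$ is assumed to be a classical solution of \eqref{eq:FK_f}, in particular $u(s,\cdot)$ is partially $\mathcal{C}^2$ uniformly enough on $\bar{\mathcal{O}}$ and $\partial_s u$ exists and is continuous, I may combine the time-derivative term with the measure-derivative terms. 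The integrability hypothesis of Lemma \ref{lem:ito}, namely $\e[\int_t^T (|b(X_s,\mathcal{L}_{X_s})|^2 + |\sigma(X_s,\mathcal{L}_{X_s})|^4)ds] < \infty$, follows from the Lipschitz Assumption \ref{ass:A_1} together with the $L^p$ estimates (Proposition \ref{pro:3.2}) and the square-integrability of $\xi$ (Assumption \ref{ass:A_1}(ii))—I would invoke these to justify the application of the It\^{o} formula (for the $L^4$ control of $\sigma$ one strictly needs a moment slightly above $4$, but $\sigma$ has linear growth and the displacement-type bound on $K$ keeps all moments finite on $[0,T]$; alternatively one notes $\sigma$ enters only through $\sigma^2$, so $L^4$ in $\sigma$ is $L^2$ in $\sigma^2$, matching the standard drift-type requirement).

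Applying Lemma \ref{lem:ito} to $\phi(s,\mu) = u(s,\mu)$, and writing $b_s = b(X_s,\mathcal{L}_{X_s})$, $\sigma_s = \sigma(X_s,\mathcal{L}_{X_s})$, $\mathrm{d}K_s$ for the reflection term, gives
\begin{equation*}
u(T,\mathcal{L}_{X_T}) = u(t,\mathcal{L}_{X_t}) + \int_t^T (\partial_s + \mathcal{T}) u(s,\mathcal{L}_{X_s})\,\mathrm{d}s + \int_t^T \e\!\left[ D_\mu u(s,\mathcal{L}_{X_s})(X_s)\right]\mathrm{d}K_s,
\end{equation*}
where $\mathcal{T}$ is exactly the operator displayed in the statement because $\e[b_s\,D_\mu u(s,\mathcal{L}_{X_s})(X_s)] = \int_{\mathbb{R}} b(z,\mathcal{L}_{X_s})D_\mu u(s,\mathcal{L}_{X_s})(z)\,\mathcal{L}_{X_s}(\mathrm{d}z)$ and similarly for the second-order term. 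Now I use the three parts of \eqref{eq:FK_f}: by (i), $(\partial_s + \mathcal{T})u(s,\mu) = 0$ whenever $\mu \in \mathcal{O}$; by (iii), $u(T,\mathcal{L}_{X_T}) = G(\mathcal{L}_{X_T})$; and for the $\mathrm{d}K_s$ term I invoke the Skorokhod condition $\int \e[h(X_s)]\,\mathrm{d}K_s = 0$, which says $\mathrm{d}K_s$ is carried by the set $\{s : \e[h(X_s)] = 0\} = \{s : \mathcal{L}_{X_s} \in \partial\mathcal{O}\}$, so on the support of $\mathrm{d}K_s$ the Neumann condition (ii), $\int_{\mathbb{R}} D_\mu u(s,\mathcal{L}_{X_s})(y)\,\mathcal{L}_{X_s}(\mathrm{d}y) = 0$, makes the integrand vanish. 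Taking expectations (all terms are deterministic here, $K$ being deterministic, and $u(s,\mathcal{L}_{X_s})$ is a deterministic number) yields $u(t,\mathcal{L}_{X_t}) = \e[G(\mathcal{L}_{X_T})]$; re-indexing so that the equation is run on $[0,T]$ from a generic initial law identifies $u(t,\mu)$ with \eqref{eq:FK_u}.

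The main obstacle is the rigorous treatment of the two "boundary" set arguments: first, that $\{s : \e[h(X_s)] = 0\}$ is precisely where $\mathcal{L}_{X_s} \in \partial\mathcal{O}$ (this uses that $\mathcal{O} = \{\mu : \int h\,\mathrm{d}\mu > 0\}$ and Lemma \ref{lem:2.1}, which guarantees $H$ is bi-Lipschitz so the boundary is genuinely the zero level set, not a fatter set), and second, that the Neumann condition (ii), stated on $\partial\mathcal{O}$, may legitimately be substituted inside the $\mathrm{d}K_s$-integral; here one must be a little careful because $u$ and its L-derivative are assumed defined and continuous on $\bar{\mathcal{O}}$, so $D_\mu u(s,\cdot)$ extends continuously to $\partial\mathcal{O}$ and the substitution is valid $\mathrm{d}K_s$-a.e. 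A secondary technical point is justifying that $u(s,\mathcal{L}_{X_s})$ inherits enough regularity in $s$ to apply Lemma \ref{lem:ito} as a composition — since $s \mapsto \mathcal{L}_{X_s}$ is continuous in $\mathcal{W}_2$ (from the $L^2$ bounds and continuity of paths) and $u$ is jointly continuous with the requisite partial $\mathcal{C}^2$ regularity in the measure variable, Lemma \ref{lem:ito} applies directly to $\phi = u$ after absorbing $\partial_t u$; if one wished to avoid assuming Lemma \ref{lem:ito} covers time-dependent $\phi$, one could instead run the argument on each subinterval and pass to a Riemann-sum limit, but I expect the cleaner route is simply to state Lemma \ref{lem:ito} for time-dependent integrands, which its proof in \cite{briand2020forward} supports.
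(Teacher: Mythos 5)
Your proposal is correct and follows essentially the same route as the paper's own proof: apply the It\^{o} formula of Lemma \ref{lem:ito} to $u(s,\mathcal{L}_{X_s})$ along the solution, use (i) to cancel the $(\partial_t+\mathcal{T})u$ term, use the Skorokhod condition together with the Neumann condition (ii) to annihilate the $dK$-integral, and conclude with the terminal condition (iii). Your additional remarks on the integrability hypotheses, the identification of $\{s:\e[h(X_s)]=0\}$ with $\{s:\mathcal{L}_{X_s}\in\partial\mathcal{O}\}$, and the time-dependent form of the It\^{o} lemma only make explicit points the paper leaves implicit.
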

\begin{proposition}\label{pro:3.2}
Suppose that Assumptions \ref{ass:A_1} and \ref{ass:A_2} hold.  
\begin{enumerate}[(i)]
	\item  For all $p$, there exists a positive $C_p$, depending on $T, b, \sigma$ and $h$ such that 
$$
\e\zkuo{\sup_{t\leq T}|X_t|^p} \leq C_p(1+\e\zkuo{|\xi|^p})\,.
$$

	\item There exists a positive constant $C$, depending on $T, b, \sigma$ and $h$ such that 
$$
\forall 0\leq s\leq t\leq T ,\quad |K_t-K_s| \leq C|t-s|^{\frac{1}{2}}\,.
$$
	\item For all $p\geq 1$ such that $\xi \in L^p$ we have 
$$
\forall 0\leq s\leq t\leq T ,\quad \e\zkuo{\abs{X_t-X_s}^p}\leq C|t-s|^{\frac{p}{2}}\,.
$$
\end{enumerate}	
\end{proposition}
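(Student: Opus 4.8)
The plan is to establish the three estimates in the order (i), then (iii), then (ii), since the moment bound on $\sup_{t\le T}|X_t|$ feeds into the increment estimate for $X$, and the increment estimate for $X$ together with Lemma~\ref{lem:2.2} feeds into the bound on $K$. Throughout I would use the representation from Theorem~\ref{thm:1}, namely $X_t = U_t + K_t$ with $K_t = \sup_{s\le t} G_0(\law{U_s})$, and the fact (Proposition~\ref{pro:3.2}(ii) aside) that $K$ is deterministic, continuous and nondecreasing, so that $|K_t|$ itself is bounded by $C(1+\e[|\xi|])$ via the Lipschitz property of $G_0$ in Lemma~\ref{lem:2.2} once we control $\e[\mathcal{W}_1(\law{U_s},\delta_0)]\le \e[|U_s|]$.

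For part (i), I would write $|X_t|^p \le C(|\xi|^p + |\int_0^t b(X_s,\law{X_s})\,ds|^p + |\int_0^t \sigma(X_s,\law{X_s})\,dB_s|^p + |K_t|^p)$, apply Hölder to the drift term, the Burkholder--Davis--Gundy inequality to the stochastic integral, and use the linear growth of $b$ and $\sigma$ that follows from the Lipschitz Assumption~\ref{ass:A_1} (with $\mathcal{W}_2(\law{X_s},\delta_0)^2 \le \e[|X_s|^2] \le \e[\sup_{u\le s}|X_u|^2]$). For $p\ge 2$ this gives, after taking suprema, $\e[\sup_{t\le T}|X_t|^p] \le C_p(1 + \e[|\xi|^p]) + C_p\int_0^T \e[\sup_{u\le s}|X_u|^p]\,ds$, and Gronwall's lemma closes it; the $K$ term is handled first by the a~priori bound $|K_t|\le \sup_{s\le T} G_0(\law{U_s}) \le \frac{M}{m}\sup_{s\le T}\mathcal{W}_1(\law{U_s},\delta_0) + |G_0(\delta_0)|$, which is finite once $\e[\sup|U_s|^p]<\infty$, itself obtained by the same BDG/Gronwall argument applied to $U$ alone (which does not involve $K$). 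The case $1\le p<2$ follows from Jensen. A technical point to be careful about: one must bound $U$ first (it is genuinely Markovian-looking and $K$-free), deduce finiteness of $K$, and only then bound $X=U+K$; circularity is avoided this way.

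For part (iii), fix $0\le s\le t\le T$ and write $X_t - X_s = \int_s^t b(X_u,\law{X_u})\,du + \int_s^t \sigma(X_u,\law{X_u})\,dB_u + (K_t - K_s)$. The first term is bounded in $L^p$ by $|t-s|^{p}\cdot C(1+\e[\sup|X_u|^p])$, hence by $C|t-s|^{p/2}|t-s|^{p/2}\le C|t-s|^{p/2}$ on $[0,T]$ (crudely, $|t-s|^p \le T^{p/2}|t-s|^{p/2}$); the stochastic term is bounded by $C|t-s|^{p/2}$ via BDG and the linear growth plus part~(i); and the $K$-increment is handled by part~(ii) (which is why I prove (ii) before invoking it here, or alternatively prove (ii) as a self-contained step). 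Combining gives $\e[|X_t-X_s|^p]\le C|t-s|^{p/2}$.

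For part (ii), by the remark following Theorem~\ref{thm:1}, $0 \le K_{t} - K_s = \sup_{r\in[0,t-s]} G_0(\law{U^{(s)}_r})$ where $U^{(s)}_r := X_s + \int_s^{s+r} b(X_u,\law{X_u})\,du + \int_s^{s+r}\sigma(X_u,\law{X_u})\,dB_u$, and since $G_0(\law{X_s}) \le K_s$ implies $H(K_s, \law{X_s})\ge 0$ hence in effect $G_0$ measured from the "current" state can be compared at $r=0$ to something nonpositive, the Lipschitz bound of Lemma~\ref{lem:2.2} gives $K_{t}-K_s \le \frac{M}{m}\sup_{r\in[0,t-s]} \mathcal{W}_1(\law{U^{(s)}_r}, \law{X_s}) \le \frac{M}{m}\sup_{r\in[0,t-s]} \e[|U^{(s)}_r - X_s|]$. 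The increment $U^{(s)}_r - X_s$ is a drift-plus-martingale increment over a time window of length at most $|t-s|$, so by Hölder, BDG and part~(i) its $L^1$ norm is bounded by $C|t-s|^{1/2}$, yielding $|K_t - K_s|\le C|t-s|^{1/2}$.

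The main obstacle is the first step of part~(ii): one must argue cleanly that the constraint already satisfied at time $s$ (i.e. $\e[h(X_s)]\ge 0$, equivalently $H(0,\law{X_s})\ge 0$, so $G_0(\law{X_s})=0$) lets us replace the infimum defining $K_{t}-K_s$ by a genuine Wasserstein increment between $\law{U^{(s)}_r}$ and $\law{X_s}$ rather than to a fixed reference measure — this is exactly where Lemma~\ref{lem:2.2}, evaluated along the flow and with $\bar G_0(\law{X_s})$, does the work, and getting the base point of the comparison right is the delicate bookkeeping. Everything else is standard BDG/Gronwall/Hölder machinery, but the order of the three proofs matters: $U$-bound $\Rightarrow$ $K$-bound and $X$-bound for (i), then (ii) from Lemma~\ref{lem:2.2} plus (i), then (iii) from (i) and (ii).
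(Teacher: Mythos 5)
Your proposal is correct and follows essentially the same route as the paper: control $K$ through the Lipschitz property of $G_0$ (Lemma \ref{lem:2.2}), run B-D-G/H\"older/Gronwall for (i), reduce (ii) to an $L^1$ increment of the drift-plus-martingale part over a window of length $t-s$, and obtain (iii) from the decomposition $X_t-X_s=U_t-U_s+K_t-K_s$; your bookkeeping in (ii), via the remark after Theorem \ref{thm:1} with base point $\law{X_s}$ (where $G_0(\law{X_s})=0$), is an equivalent variant of the paper's direct comparison of $G_0(\law{U_r})$ with $G_0(\law{U_s})$, using that $K_s\ge G_0(\law{U_s})$. One small caveat on (i): your claim that $U$ can be bounded ``alone, without $K$'' is not literally true, since the integrands defining $U$ are evaluated at $X_s=U_s+K_s$ and its law; there is no need for a sequential argument --- as in the paper, one inserts the bound $K_t\le \frac{M}{m}\,\e\zkuo{\sup_{s\le t}\abs{U_s-U_0}}$ (Lemma \ref{lem:2.2} together with $\e\zkuo{h(\xi)}\ge 0$, so $G_0(\law{U_0})=0$) into the estimate for $\e\zkuo{\sup_{t\le T}\abs{X_t}^p}$ and closes everything with a single Gronwall step, which is the fix your own listed tools already provide.
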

\subsection{Density of $K_t$}
Let $\mathcal{A}_\mu$ be the partial operator of second order defined by 
\begin{equation*}
\begin{aligned}
	  \mathcal{A}_\mu f(x) = b(x,\mu)\frac{\partial }{\partial x} f(x) + \frac{1}{2}\sigma\kuo{x,\mu}^2\frac{\partial^2}{\partial x^2}f(x)\,.
\end{aligned}
\end{equation*}

\begin{proposition}\label{pro:3.3}
Suppose Assumptions \ref{ass:A_1}, \ref{ass:A_2} and \ref{ass:A_4} hold. Let $(X,K)$ be the unique deterministic flat solution to \eqref{eq:1.1}. Then the process $K$ is Lipschitz continuous and the Stielije measure $dK$ has the following density w.r.t the Lebesgue measure: 
	\begin{equation}\label{eq:density_dk}
k: \mathbb{R}^{+} \ni t \longmapsto \frac{\left(\mathbb{E}\left[\mathcal{A}_{\mathcal{L}_{X_t}} h\left(X_{t}\right)\right]\right)^{-}}{\mathbb{E}\left[h^{\prime}\left(X_{t}\right)\right]} \mathbbm{1}_{\mathbb{E}\left[h\left(X_t\right)\right]=0}\,,
\end{equation}
where, $(\cdot)^{-}:= min(\cdot,0)$.
\end{proposition}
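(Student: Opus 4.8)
The plan is to leverage the explicit representation $K_t = \sup_{s\le t} G_0(\mathcal{L}_{U_s})$ from Theorem \ref{thm:1}, together with the regularity of $h$ granted by Assumption \ref{ass:A_4}, to first show that $t\mapsto K_t$ is Lipschitz and then differentiate the relation $\mathbb{E}[h(X_t)]=0$ on the support of $dK$. First I would establish that $m(t):=\mathbb{E}[h(X_t)]$ is a Lipschitz (indeed $C^1$) function of $t$. Since $h\in\mathcal{C}^2$ with bounded derivatives and $X$ satisfies \eqref{eq:1.1}, I can apply the classical (one-dimensional, deterministic-$K$) It\^o formula to $h(X_t)$: using the $L^p$ bounds of Proposition \ref{pro:3.2}(i) and the Lipschitz growth of $b,\sigma$ to justify taking expectations and killing the martingale term, I get
\begin{equation*}
\mathbb{E}[h(X_t)] = \mathbb{E}[h(\xi)] + \int_0^t \mathbb{E}\big[\mathcal{A}_{\mathcal{L}_{X_s}}h(X_s)\big]\,ds + \int_0^t \mathbb{E}[h'(X_s)]\,dK_s.
\end{equation*}
The integrand $s\mapsto \mathbb{E}[\mathcal{A}_{\mathcal{L}_{X_s}}h(X_s)]$ is bounded (by boundedness of $h',h''$ and the moment bounds on $X$, $b$, $\sigma$), and $s\mapsto\mathbb{E}[h'(X_s)]$ is bounded and, by bi-Lipschitzness of $h$, bounded below by $m>0$. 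Hence $m(\cdot)$ is absolutely continuous.

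Next I would identify the structure of $dK$. By the Skorokhod condition $\int_0^t m(s)\,dK_s=0$ and $m\ge 0$, the measure $dK$ is carried by the closed set $Z:=\{t\ge 0: m(t)=0\}$. On the (relative) interior of $Z$, $m$ is constantly zero so $m'=0$ a.e. there; a standard argument (using that $dK$ does not charge the at-most-countable set of isolated points of $Z$, combined with absolute continuity of $m$) shows that for a.e.\ $t$ we have $m'(t)=0$ whenever $t$ is in the support of $dK$. Differentiating the displayed identity gives, for a.e.\ $t$,
\begin{equation*}
0 = m'(t) = \mathbb{E}\big[\mathcal{A}_{\mathcal{L}_{X_t}}h(X_t)\big] + \mathbb{E}[h'(X_t)]\,\dot K_t \quad \text{on } \{t\in\mathrm{supp}(dK)\},
\end{equation*}
provided we already know $K$ is absolutely continuous with density $\dot K_t$. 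To close this circularity, the cleanest route is: from $0\le dK$ and the identity one gets $\mathbb{E}[h'(X_t)]\,dK_t = -\mathbb{E}[\mathcal{A}_{\mathcal{L}_{X_t}}h(X_t)]\,dt$ on $\{m=0\}$ as measures, because $d(m(t)) = \mathbb{E}[\mathcal{A}_{\mathcal{L}_{X_t}}h(X_t)]\,dt + \mathbb{E}[h'(X_t)]\,dK_t$ and the left side is $0$ on $Z$ while $dt$ and $dK$ live there; since $\mathbb{E}[h'(X_t)]\ge m>0$, this exhibits $dK$ as having bounded density $k(t) = \big(\mathbb{E}[\mathcal{A}_{\mathcal{L}_{X_t}}h(X_t)]\big)^{-}/\mathbb{E}[h'(X_t)]\cdot\mathbbm{1}_{m(t)=0}$, where the negative-part truncation appears because $dK\ge 0$ forces the density to be the nonnegative object $(-\mathbb{E}[\mathcal{A}h])^{+} = (\mathbb{E}[\mathcal{A}h])^{-}$ with the sign convention $(\cdot)^-=\min(\cdot,0)$ as stated. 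The Lipschitz continuity of $K$ is then immediate from boundedness of $k$.

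The main obstacle is the measure-theoretic step: rigorously arguing that $dK$ is absolutely continuous and that its density is supported on $\{m=0\}$ with the claimed form, rather than merely showing $m$ is $C^1$. The key technical point is that on the set $\{t: m(t)=0\}$, absolute continuity of $m$ forces $\int_0^t \mathbb{E}[\mathcal{A}_{\mathcal{L}_{X_s}}h(X_s)]\mathbbm{1}_{m(s)=0}\,ds + \int_0^t \mathbb{E}[h'(X_s)]\mathbbm{1}_{m(s)=0}\,dK_s = 0$ (each term being the restriction of $dm$ to $Z$, which vanishes), and then using $\mathbb{E}[h'(X_s)]$ bounded below to solve for $dK$. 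One must also verify $dK$ does not charge $\{m>0\}$ (that is exactly the Skorokhod condition) and handle the measure zero of boundary points carefully; here the bi-Lipschitz Assumption \ref{ass:A_2} and the $\mathcal{C}^2$ Assumption \ref{ass:A_4} do all the work. I would model the argument on the corresponding one in \cite{briand2020particles}, the only new feature being the dependence of $b,\sigma$ (hence of $\mathcal{A}_\mu$) on the law $\mathcal{L}_{X_t}$, which is harmless since that law is a fixed (deterministic) flow once $(X,K)$ is fixed.
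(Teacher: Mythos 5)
There is a genuine gap, and it sits exactly at the point you flag as the ``main obstacle.'' Your whole identification of $dK$ rests on the claim that $dm$ restricted to $Z=\{m=0\}$ vanishes, which you justify by asserting that $m(t)=\mathbb{E}[h(X_t)]$ is absolutely continuous. But the identity you derive,
\begin{equation*}
m(t)=\mathbb{E}[h(\xi)]+\int_0^t \mathbb{E}\bigl[\mathcal{A}_{\mathcal{L}_{X_s}}h(X_s)\bigr]\,ds+\int_0^t \mathbb{E}[h'(X_s)]\,dK_s,
\end{equation*}
only shows $m$ is continuous and of bounded variation: boundedness of $\mathbb{E}[h'(X_s)]$ does \emph{not} make the $dK$-integral absolutely continuous in $t$ unless $dK\ll dt$ already holds --- and that is precisely the conclusion of the proposition. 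If $dK$ had a nonzero singular part $dK^s$ carried by a Lebesgue-null subset $N\subset Z$ (the Skorokhod condition only forces $dK$ to live on $Z$), then $dm|_N=\mathbb{E}[h'(X_\cdot)]\,dK^s|_N\geq m\,dK^s|_N>0$, so ``the left side is $0$ on $Z$'' is exactly equivalent to what must be proved. Your attempt to ``close the circularity'' invokes the same unproved absolute continuity again, so as written the argument is circular, and since you obtain the Lipschitz continuity of $K$ only as a corollary of the density formula, that part of the statement is not established either.

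The paper breaks the circle differently: it first proves the Lipschitz continuity of $K$ directly from the representation $K_t=\sup_{s\leq t}G_0(\mathcal{L}_{U_s})$ of Theorem \ref{thm:1}, estimating $\bigl|\bar{G}_0(\mathcal{L}_{U_s})-\bar{G}_0(\mathcal{L}_{U_t})\bigr|\leq \frac{1}{m}\bigl|\mathbb{E}[h(\bar{G}_0(\mathcal{L}_{U_s})+U_t)]\bigr|=\frac{1}{m}\bigl|\int_s^t\mathbb{E}[\bar{\mathcal{A}}_{X_r,\mathcal{L}_{X_r}}h(\bar{G}_0(\mathcal{L}_{U_s})+U_r)]\,dr\bigr|\leq C|t-s|$, using Assumption \ref{ass:A_4} and the moment bounds of Proposition \ref{pro:3.2}; only then, with $dK\ll dt$ in hand, does it run your It\^o-plus-Skorokhod computation to read off the density. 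Your route can in fact be repaired without that step, but it needs a different justification of $dm|_Z=0$ that does not presuppose absolute continuity: since $m$ is continuous, $m=0$ on the closed set $Z$, and $dK$ does not charge the complementary gaps, each gap $(a,b)$ with $a,b\in Z$ satisfies $dm((a,b))=m(b)-m(a)=0$, whence $dm([s,t])=0$ for all $s,t\in Z$ and therefore $dm|_Z=0$ as a measure; only then can you solve $\mathbb{E}[h'(X_t)]\,dK_t=-\mathbb{E}[\mathcal{A}_{\mathcal{L}_{X_t}}h(X_t)]\mathbbm{1}_{\{m(t)=0\}}\,dt$ using $\mathbb{E}[h'(X_t)]\geq m>0$. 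Either supply this argument or the paper's direct Lipschitz estimate; without one of them the proposal does not go through.
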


\subsection{Proofs of Theorem \ref{thm:1}, Proposition \ref{pro:F-K} and Proposition \ref{pro:3.3}}
\begin{proof}[Proof of Theorem \ref{thm:1}]
	The proof for the case of MR-SDE is given in \cite{briand2020particles}. We present here the proof of the MR-MVSDE.
	
\textbf{Step 1: Construct the solution $(X,K)$ of \eqref{eq:1.1}.}

	Let us consider the set 
 $$\mathcal{C}^2 = \dkuo{X_t \text{ is } \mathcal{F}_t-\text{adapted continuous process}\,, \e\kuo{\sup_{0\leq t\leq T}|X_t|^2< \infty}} \,.$$
 And let $\widetilde{X}_t\in \mathcal{C}^2$ be a given process. We set 
\begin{equation}
\tilde{U}_t=\xi+\int_0^t b\left(\tilde{X}_s, \mathcal{L}_{\tilde{X}_s}\right) d s+\int_0^t \sigma\left(\tilde{X}_s, \mathcal{L}_{\tilde{X}_s}\right) d B_s, \quad \operatorname{Law}\left(\tilde{U}_s\right)=: \law{\tilde {U}_s}\,,
\end{equation}
and define the function $K_t$ by setting
$$
{K_t=\sup _{s \leq t} \inf \left\{x \geq 0: \mathbb{E}\left[h\left(x+\tilde{U}_s\right)\right] \geq 0\right\}=G_0\left(\law{\tilde {U}_s}\right)}\,.
$$
The function $K_t$ being given, let us define the process $X_t$ by the formula
\begin{equation}
\begin{aligned}
	  X_t=\xi+\int_0^t b\left(\tilde{X}_s,\mathcal{L}_{\tilde{X}_s}\right) d s+\int_0^t \sigma\left(\tilde{X}_s,\mathcal{L}_{\tilde{X}_s}\right) d B_s+K_t .
\end{aligned}
\end{equation}
%\textbf{Step 1: Check $(X,K)$ is the solution of \eqref{eq:1.1} with $U_t$ \eqref{eq:U} replaced by $\tilde{U}_t$.}

\textbf{Step 2: Check $K_t$ satisfies $\mathbb{E}\left[h\left(X_t\right)\right] \geq 0$ and $\int_0^t \mathbb{E}\left[h\left(X_s\right)\right] d K_s=0$.}
%Let us check $(X,K)$ is the solution of \eqref{eq:1.1} with $U_t$ replaced by $\tilde{U}_t$. 

Based on the definition of $K_t$, we have $\e\zkuo{h(X_t)}\geq 0$, $K_t = G_0\kuo{\law{\tilde{U}_t}}, dK_t-a.e.$ and $G_0(\law{\tilde{U}_t})>0, dK_t-a.e.$ Then we obtain 
$$
\begin{aligned}
\int_0^t \mathbb{E}\left[h\left(X_s\right)\right] d K_s & =\int_0^t \mathbb{E}\left[h\left(\tilde{U}_s+K_s\right)\right] d K_s \\
& =\int_0^t \mathbb{E}\left[h\left(\tilde{U}_s+G_0\left(\law{\tilde{U}_s}\right)\right)\right] d K_s \\
& =\int_0^t \mathbb{E}\left[h\left(\tilde{U}_s+G_0\left(\law{\tilde{U}_s}\right)\right)\right] \mathbbm{1}_{\dkuo{G_0\left(\law{\tilde{U}_s}\right)>0}} d K_s .
\end{aligned}
$$
Moreover, since $h$ is continuous, we have $\mathbb{E}\left[h\left(\tilde{U}_s+G_0\left(\law{\tilde{U}_s}\right)\right)\right]=0$ as soon as $G_0\left(\law{\tilde{U}_s}\right)>0$, so that
$$
\int_0^t \mathbb{E}\left[h\left(X_s\right)\right] d K_s=0\,.
$$

\textbf{Step 3: Show the solution map $\Phi$ which associates to $\tilde{X}$ to the solution $X$ of \eqref{eq:1.1} is a contraction.}

 Let $\tilde{X}, \tilde{X}^\prime \in \mathcal{C}^2$ and define $\tilde{U}$, $K$ and $\tilde{U}^\prime$, $K^\prime$ as above, using the some Brownian motion. We have from the Lipschitz assumption of $b$ and $\sigma$, Cauchy-Schwarz inequality and Burkholder-Davis-Gundy (B-D-G) inequality 
 \begin{equation}\label{eq:Th_1.1}
\begin{aligned}
	  &\e\zkuo{\sup_{t\leq T}|\tilde{X}_t - \tilde{X}_t^\prime|^2}\\
	  &\leq  3\e \left[\sup_{t\leq T}\left\{\abs{\int_0^t\kuo{b\kuo{\tilde{X}_s,\mathcal{L}_{\tilde{X}_s}}- b\kuo{\tilde{X}^\prime_s,\mathcal{L}_{\tilde{X}^\prime_s}} 
	  }ds}^2 + \abs{\int_0^t \kuo{\sigma\kuo{\tilde{X}_s,\mathcal{L}_{\tilde{X}_s}}-\sigma\kuo{\tilde{X}^\prime_s,\mathcal{L}_{\tilde{X}^\prime_s}}}dB_s}^2\right.\right.\\
	  &\left.\left.\qquad +\abs{K_t - K^\prime_t}^2 \right\}\right]\\
	  &\leq3 \mathbb{E}\left[\left.T\cdot \sup _{t \leq T} \int_0^t\left(b\left(\tilde{X}_s, \mathcal{L}_{\tilde{X}_s}\right)-b\left(\tilde{X}_s^{\prime}, \mathcal{L}_{\tilde{X}_s^{\prime}}\right)\right) d s\right|^2\right]
   \\
	  &\qquad
   +3 \mathbb{E}\left[\sup _{t \leq T}\left|\int_0^t\left(\sigma\left(\tilde{X}_s, \mathcal{L}_{\tilde{X}_s}\right)-\sigma\left(\tilde{X}^{\prime}_s, \mathcal{L}_{\tilde{X}^{\prime}_s}\right)\right) d B_s\right|^2\right] + 3 \sup_{t\leq T}\abs{K_t - K^\prime_t}^2 \\
	  & \leq CT\dkuo{T\e\zkuo{\sup_{t\leq T}\abs{\tilde{X}_t - \tilde{X}^\prime_t}}+\e\zkuo{\int_0^T \mathcal{W}_2\kuo{\mathcal{L}_{\tilde{X}_s},\mathcal{L}_{\tilde{X}^\prime_s}}^2ds }}\\
	  &\qquad+ CT \e\zkuo{\sup_{t\leq T}\abs{\tilde{X}_t - \tilde{X}^\prime_t}}+CT\e\zkuo{\int_0^T \mathcal{W}_2\kuo{\mathcal{L}_{\tilde{X}_s},\mathcal{L}_{\tilde{X}^\prime_s}}^2ds }+ 3 \sup_{t\leq T}\abs{K_t - K^\prime_t}^2   \,.
\end{aligned}
\end{equation}
From the representation of $K$ and Lemma \ref{lem:2.2}, we have 
\begin{equation}\label{eq:K_t}
\begin{aligned}
	  \sup_{t\leq T}|K_t-K^\prime_t|^2 &= \sup_{t\leq T}\abs{\sup_{s\leq t}G_0(\law{\tilde X_t})-\sup_{s\leq t}G_0(\law{\tilde X_t^\prime})}\leq \sup_{t\leq T}\abs{G_0(\law{\tilde X_t})-G_0(\law{\tilde X_t^\prime})}\\
	  &\leq \frac{M}{m} \mathbb{E}\left[\sup _{t \leq T}\left|\tilde{U}_t-\tilde{U}^{\prime}_t\right|^2\right] \leq  C\mathbb{E}\left[\sup _{t \leq T}\left|\tilde{X}_t-\tilde{X}_t^{\prime}\right|^2\right] \,.
\end{aligned}
\end{equation}
Plugging \eqref{eq:K_t} into \eqref{eq:Th_1.1}, we have 
$$
\e\zkuo{\sup_{t\leq T}|\tilde X_t - \tilde X_t^\prime|}\leq  C(1+T) T \mathbb{E}\left[\sup _{t \leq T}\left|\tilde{X}_t-\tilde{X}_t^{\prime}\right|^2\right].
$$
Hence, there exists a positive $\tilde{T}$, depending on $b, \sigma$ and $h$ only, such that for all $T<\tilde{T}$, the map $\Phi$ is a contraction. We first deduce the existence and uniqueness of the solution on $[0, \tilde{T}]$ and then on $\mathbb{R}^{+}$ by iterating the construction. 

\end{proof}

\begin{proof}[Proof of Proposition \ref{pro:F-K}]
	For given $X_0\in \mathbb{R}$ and $t_0\geq 0$, let us set $\mu_s : = \mathcal{L}_{X_s^{t_0,X_0}}$. Then, by Lemma \ref{lem:ito}, 
	$$
	u(T,\mu_T) = u(t_0,\mu_{t_0}) + \int_{t_0}^T\kuo{\partial_t + \mathcal{T}}u(s,\mu_s) + \int_{t_0}^T\int_\mathbb{R}D_\mu u(s,\mu_s)(y) \mu_s(dy) dK_s 	\,,
 $$
	where, by \eqref{eq:FK_f}-(ii), 
	$$
	\int_{t_0}^T\int_{\mathbb{R}}D_\mu u(s,\mu_s)(y) \mu_s(dy)dK_s = \int_{t_0}^T\mathbbm{1}_{\dkuo{\mu_s \in \partial \mathcal{O}}} \kuo{\int_{\mathbb{R}}D_\mu u(s,\mu_s) (y) }dK_s = 0\,.
	$$
	From the equation satisfied by $u$, we obtain
	$$
	u(t_0,\mu_0) = u(t_0,\mu_{t_0}) = \e\zkuo{u(T,\mu_T)} = \e\zkuo{G(\mathcal{L}_{X_T})}\,.
	$$
\end{proof}

\begin{proof}[Proof of Proposition \ref{pro:3.2}]

	(i) We have 
	\begin{equation}\label{eq:pro_3_2_1}
\begin{aligned}
\mathbb{E}\left[\sup _{t \leq T}\left|X_t\right|^p\right] & \leq 4^{p-1}\left\{\mathbb{E}\left[|\xi|^p\right]+\mathbb{E}\left[\sup _{t \leq T}\left(\int_0^t\left|b\left(X_s, \mathcal{L}_{X_s}\right)\right| d s\right)^p\right]\right. \\
& \left.+\mathbb{E}\left[\sup _{t \leq T}\left(\int_0^t\left|\sigma\left(X_s, \mathcal{L}_{X_s}\right)\right| d B_s\right)^p\right]+K_T^p\right\}\,.
\end{aligned}
\end{equation}
We first deal with $K^p_T = \sup_{t\leq T}G_0(\law{U_t})$. Apply Lipschitz property of $G_0$, Lemma \ref{lem:2.2} and the definition of the Wasserstein metric $\mathcal{W}_1$, we have 
$$
\forall t \geq 0, \quad \abs{G_0(\law{U_t})}\leq \frac{M}{m}\e\zkuo{|U_t - U_0|},
$$
since $G_0(\law{U_0}) = 0$ as $\e\zkuo{h(X_0)}\geq 0$, and where $U$ is defined by \eqref{eq:U}. Therefore 
\begin{equation}\label{eq:pro_3_2_2}
\begin{aligned}
	  |K_T|^p = \abs{\sup_{t\leq T}G_0(\law{U_t})}^p & \leq  2^{p-1}\left\{ \kuo{\frac{M}{m}}^p\e\zkuo{\sup_{t\leq T}\kuo{\int_0^t\abs{b(X_s,\mathcal{L}_{X_s})}ds}^p}\right . \\
	  &\qquad \left .+ \e\zkuo{\sup_{t\leq T}\kuo{\int_0^t\abs{\sigma(X_s,\mathcal{L}_{X_s})}dB_s}^p}\right\}\,.
\end{aligned}
\end{equation}
Plugging \eqref{eq:pro_3_2_2} into \eqref{eq:pro_3_2_1}, we deduce that there exists a constant $C_p\geq 0$ such that 
\begin{equation}
\begin{aligned}
	  \e\zkuo{\sup_{t\leq T}|X_t|^p}&\leq C_p \e\zkuo{|\xi|^p + \sup_{t\leq T}\kuo{\int_0^t\abs{b(X_s,\mathcal{L}_{X_s})}ds}^p + \sup_{t\leq T}\abs{\int_0^t \sigma\kuo{X_s,\mathcal{L}_{X_s}}dB_s}^p}\\
	  &\leq C_p\dkuo{\e\zkuo{|\xi|^p} + T^{p-1}\e\zkuo{\int_0^T(1+|X_s|)^pds} + C\e\zkuo{\int_0^T(1+|X_s|)^2ds}^{\frac{p}{2}}}\\
	  &\leq C_p(1+\e\zkuo{|\xi|^p}) + C \int_0^T\e\zkuo{\sup_{t\leq r}|X_t|^p}dr\,.
\end{aligned}
\end{equation}
Apply Gronwall's lemma, we deduce for all $p\geq 2$, there exists a constant $C>0$ such that 
$$
\e\zkuo{\sup_{t\leq T}|X_t|^p} \leq C(1+\e\zkuo{|\xi|^p}).
$$

(ii) we observe that $\nu \rightarrow G_0(\nu)$ is Lipschitz continuous, we have 
$$
K_t - K_s = \sup_{r\leq t}G_0(\law{U_r}) -\sup_{r\leq s}G_0(\law{U_r})\leq \frac{M}{m}\mathcal{W}_1(\law{U_r},\law{U_s})\leq \frac{M}{m}\e\zkuo{\abs{U_r - U_s}},
$$
so the result follows from standard computations.

(iii) we use the decomposition 
$$X_t - X_s = U_t - U_s + K_t - K_s\,,
$$
As a result of (ii), we complete this proof.
\end{proof}

In order to prove Proposition \ref{pro:3.3}, we need following lemma: 
\begin{lemma}\label{lem:3.3}
If Assumptions \ref{ass:A_1}, \ref{ass:A_2} and \ref{ass:A_4} hold.
	\begin{enumerate}[(i)]
		\item If $\varphi$ is a continuous function such that, for some $C\geq 0$ and $p\geq 1$, 
		$$
		\forall x \in \mathbb{R}, \quad |\varphi(x)|\leq C(1+|x|^p)\,,
		$$
then the function $t\rightarrow \e\zkuo{\varphi(X_t)} $ is continuous.
		\item The functions $t \rightarrow  \e\zkuo{h(X_t)}$ and $t \rightarrow \e\zkuo{\mathcal{A}_{\mathcal{L}_{X_r}}h(X_t)}$ are continuous.
	\end{enumerate}
\end{lemma}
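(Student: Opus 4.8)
The plan is to prove (i) first by a uniform integrability argument, and then deduce (ii) as a special case once we check that the relevant integrands have polynomial growth in $X_t$ (locally uniformly in the parameter $r$). For (i), the key input is the pathwise continuity of $t \mapsto X_t$, which is immediate since $(X,K)$ is a continuous process, and the $L^p$-bound $\e[\sup_{t\le T}|X_t|^p] \le C_p(1+\e[|\xi|^p])$ from Proposition \ref{pro:3.2}(i). Fix $T>0$ and $p'>p$ with $\xi\in L^{p'}$ (if no such $p'$ exists for the given $p$, shrink $p$ slightly — actually Assumption \ref{ass:A_1}(ii) only gives $\xi\in L^2$, so I would instead argue directly: I take a sequence $t_n\to t$, note $\varphi(X_{t_n})\to\varphi(X_t)$ a.s.\ by continuity of $\varphi$ and of the path, and dominate by $C(1+\sup_{s\le T}|X_s|^p)$, which is in $L^1$ precisely when $\xi\in L^p$; to get genuine uniform integrability without an extra power I invoke that $|\varphi(X_{t_n})|\le C(1+|X_{t_n}|^p)$ and $X_{t_n}\to X_t$ in $L^p$ — this follows from Proposition \ref{pro:3.2}(iii) when $\xi\in L^p$, giving $\e[|X_{t_n}-X_t|^p]\le C|t_n-t|^{p/2}\to 0$). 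Combining a.s.\ convergence with $L^p$ convergence of $X_{t_n}$ and the polynomial bound yields uniform integrability of $\{\varphi(X_{t_n})\}_n$ (e.g.\ via the $L^{p'}$-boundedness when a higher moment is available, or via Vitali's theorem using that $|X_{t_n}|^p\to|X_t|^p$ in $L^1$ forces uniform integrability of $|X_{t_n}|^p$), hence $\e[\varphi(X_{t_n})]\to\e[\varphi(X_t)]$, which is the claim.

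For (ii), the function $h$ is Lipschitz by Assumption \ref{ass:A_2}, hence $|h(x)|\le |h(0)|+M|x|$ has linear growth, so $t\mapsto \e[h(X_t)]$ is continuous by part (i) with $p=1$. For the second map, write out
\[
\mathcal{A}_{\mathcal{L}_{X_r}}h(X_t) = b(X_t,\mathcal{L}_{X_r})\,h'(X_t) + \tfrac12\,\sigma(X_t,\mathcal{L}_{X_r})^2\,h''(X_t).
\]
By Assumption \ref{ass:A_4}, $h'$ and $h''$ are bounded; by Assumption \ref{ass:A_1}(i), $b(\cdot,\mathcal{L}_{X_r})$ and $\sigma(\cdot,\mathcal{L}_{X_r})$ have linear growth in the spatial variable with a constant independent of $r\le T$ (the measure-dependence only contributes the finite quantity $\mathcal{W}_2(\mathcal{L}_{X_r},\delta_0)\le \e[\sup_{s\le T}|X_s|^2]^{1/2}$, bounded by Proposition \ref{pro:3.2}(i)). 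Hence $x\mapsto \mathcal{A}_{\mathcal{L}_{X_r}}h(x)$ satisfies $|\mathcal{A}_{\mathcal{L}_{X_r}}h(x)|\le C(1+|x|^2)$ with $C$ uniform in $r\le T$, and moreover $x\mapsto \mathcal{A}_{\mathcal{L}_{X_r}}h(x)$ is continuous. Applying part (i) (now with $p=2$, which requires $\xi\in L^2$, available from Assumption \ref{ass:A_1}(ii)) gives the continuity of $t\mapsto \e[\mathcal{A}_{\mathcal{L}_{X_r}}h(X_t)]$ for each fixed $r$.

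The main obstacle is the uniform integrability step in (i): a.s.\ convergence of $\varphi(X_{t_n})$ alone does not give convergence in mean, and the natural dominating function $C(1+\sup_{s\le T}|X_s|^p)$ is only integrable under $\xi\in L^p$ (not a strict inequality), so dominated convergence applies on the nose only when $\xi\in L^p$; for the borderline we must upgrade to genuine uniform integrability. The cleanest route is to observe, via Proposition \ref{pro:3.2}(iii), that $X_{t_n}\to X_t$ in $L^p$, whence $|X_{t_n}|^p\to|X_t|^p$ in $L^1$; a sequence converging in $L^1$ is uniformly integrable, so $\{|X_{t_n}|^p\}_n$ is uniformly integrable, and since $|\varphi(X_{t_n})|\le C(1+|X_{t_n}|^p)$ this transfers to $\{\varphi(X_{t_n})\}_n$; combined with the a.s.\ convergence, Vitali's convergence theorem closes the argument. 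One should double-check that Proposition \ref{pro:3.2}(iii) is available with the exponent $p$ in question — it is stated "for all $p\ge 1$ such that $\xi\in L^p$", which covers $p=1$ and $p=2$, the only cases needed for part (ii).
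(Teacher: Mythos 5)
Your proof is correct, and for part (ii) it takes the same route as the paper: a polynomial growth bound for $x\mapsto\mathcal{A}_\mu h(x)$ followed by an application of part (i). The genuine difference is part (i), which the paper does not prove at all but delegates to \cite{briandMEAN} (Lemma 3), whereas you give a self-contained argument: a.s.\ convergence of $\varphi(X_{t_n})$ from path continuity and continuity of $\varphi$, uniform integrability of $\{|X_{t_n}|^p\}_n$ deduced from the $L^p$-continuity $\e\zkuo{|X_{t_n}-X_t|^p}\le C|t_n-t|^{p/2}$ of Proposition \ref{pro:3.2}(iii), and Vitali's theorem. This is valid for every $p$ with $\xi\in L^p$, which is all that part (ii) requires ($p=1,2$, with $\xi\in L^2$ guaranteed by Assumption \ref{ass:A_1}); your remark that (i) implicitly carries this moment restriction is well taken. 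Your bound $|\mathcal{A}_{\mathcal{L}_{X_r}}h(x)|\le C(1+|x|^2)$ and the use of $p=2$ are in fact more careful than the paper's own proof, which asserts a linear-growth bound for the diffusion term even though $\sigma(x,\mu)^2$ grows quadratically in $x$ (and writes $h'$ where $h''$ is meant).

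One caveat: you prove continuity of $t\mapsto\e\zkuo{\mathcal{A}_{\mathcal{L}_{X_r}}h(X_t)}$ for each \emph{fixed} $r$, reading the subscript $r$ literally. The paper's own proof, and the only place the lemma is used (the density in Proposition \ref{pro:3.3} involves $\e\zkuo{\mathcal{A}_{\mathcal{L}_{X_t}}h(X_t)}$), require continuity with $r=t$, i.e.\ with the measure argument moving as well; the $r$ in the statement is evidently a typo. Your argument extends with one extra line: for $t_n\to t$, bound $\abs{\e\zkuo{\mathcal{A}_{\mathcal{L}_{X_{t_n}}}h(X_{t_n})}-\e\zkuo{\mathcal{A}_{\mathcal{L}_{X_t}}h(X_{t_n})}}$ by $C\kuo{1+\e\zkuo{\sup_{s\le T}|X_s|}}\mathcal{W}_2\kuo{\mathcal{L}_{X_{t_n}},\mathcal{L}_{X_t}}$, using the boundedness of $h',h''$, the Lipschitz dependence of $b,\sigma$ on the measure, the factorization $|\sigma(x,\mu)^2-\sigma(x,\nu)^2|\le|\sigma(x,\mu)-\sigma(x,\nu)|\,(|\sigma(x,\mu)|+|\sigma(x,\nu)|)$, and the uniform moment bound of Proposition \ref{pro:3.2}(i); this term vanishes because $t\mapsto\mathcal{L}_{X_t}$ is $\mathcal{W}_2$-continuous by Proposition \ref{pro:3.2}(iii) with $p=2$, and the remaining term is handled exactly by your fixed-measure argument. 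With that addition your proof covers the statement as it is actually used.
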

\begin{proof}
	(i) is the \cite{briandMEAN}[Lemma 3]. 
	
	Now, we prove (ii). Since the Assumptions \ref{ass:A_1}, \ref{ass:A_2} and \ref{ass:A_4} hold., the mapping $x\longmapsto \mathcal{A}_{\mathcal{L}_{X_t}}h(X_t)$ is continuous and for all $x\in \mathbb{R}$, there exists some universe constants $C_1$ and $C_2$ such that 
	$$
	\begin{aligned}
		\abs{b(x,\mu)h^\prime(x)}\leq C_1(1+|x|)\,,\\
		\abs{\sigma(x,\mu)^2h^{\prime}(x)}\leq C_2(1+|x|).
	\end{aligned}
	$$
	Then, using (i), we deduce that $t \rightarrow \e\mathcal{A}_{\mathcal{L}_{X_t}}h(X_t)$ is continuous.
\end{proof}
\begin{proof}[Proof of Proposition \ref{pro:3.3}]
		Firstly, we show $K$ is Lipschitz continuous. To do so, we prove that $s\mapsto \bar{G}_0(\mu_s) $ is Lipschitz continuous on $[0,T]$. If $s \leq t$, we have 
$$
%%\law{U_s}
\begin{aligned}
\left|\bar{G}_0\left(\law{U_s}\right)-\bar{G}_0\left(\law{U_t}\right)\right| & \leq \frac{1}{m}\left|H\left(\bar{G}_0\left(\law{U_s}\right), \law{U_t}\right)\right| \\
& =\frac{1}{m}\left|\mathbb{E}\left[h\left(\bar{G}_0\left(\law{U_s}\right)+U_t\right)\right]\right| \\
& =\frac{1}{m}\left|\mathbb{E}\left[h\left(\bar{G}_0\left(\law{U_s}\right)+U_s+\int_s^t b\left(X_r,\mathcal{L}_{X_r}\right) d r+\int_s^t \sigma\left(X_r,\mathcal{L}_{X_r}\right) d B_r\right)\right]\right| ,
\end{aligned}
$$
the first equality since $H(\bar{G}_0(\mu_t),\mu_t) = 0$.

Let $\bar{A}_{x,\mu}$ be the partial operator of second order defined by 
$$
\bar{\mathcal{A}}_{y,\mu}f(x) := b(y,\mu) \frac{\partial }{\partial x}f(x) + \frac{1}{2}\sigma\kuo{y,\mu}^2\frac{\partial^2}{\partial x^2}f(x)\,.
$$
Apply It\^{o}'s lemma, we have 
\begin{equation}
\begin{aligned}
	  \e\zkuo{h\kuo{\bar{G}_0(\law{U_s}) + U_t}} &= \e\zkuo{h\kuo{\bar{G}_0(\law{U_s})+U_s}} + \int_s^t\e\zkuo{\bar{\mathcal{A}}_{X_r,\mathcal{L}_{X_r}}h\kuo{\bar{G}_0(\law{U_s}) + U_r}}dr \\
	  &=H\left(\bar{G}_0\left(\law{U_s}\right), \law{U_s}\right)+\int_s^t\e\zkuo{\bar{\mathcal{A}}_{X_r,\mathcal{L}_{X_r}}h\kuo{\bar{G}_0(\law{U_s}) + U_r}}dr\\
	  & = \int_s^t\e\zkuo{\bar{\mathcal{A}}_{X_r,\mathcal{L}_{X_r}}h\kuo{\bar{G}_0(\law{U_s}) + U_r}}dr.
\end{aligned}
\end{equation}
The result follows that $h$ has bounded derivatives and $\sup_{t\leq T}|X_t|\in L^2$ for each $T>0$. So from \cite{briand2020particles}, we deduce that $K$ is Lipschitz continuous and has a bounded density on $[0,T]$ for each $T>0$. 

Then we show the density of $dK$ is \eqref{eq:density_dk}. From Assumption \ref{ass:A_4} and apply It\^{o}'s lemma, for any $s\leq t$ we can get
\begin{equation*}
\begin{aligned}
	  h(X_t) - h(X_s) &= \int_s^tb(X_r,\mathcal{L}_{X_r})h^{\prime}(X_r)dr + \int_s^t \sigma (X_r,\mathcal{L}_{X_r})h^{\prime}(X_r)dB_r \\
	  &\qquad+ \frac{1}{2}\int_s^t \sigma (X_r,\mathcal{L}_{X_r})^2h^{\prime}(X_r)d r  + \int_s^t h^{\prime}(X_r)dK_r \\
	  & =\int_s^t \mathcal{A}_{\mathcal{L}_{X_r}}h(X_r)dr + \int_s^t h^\prime(X_r)dK_r + \int_s^t \sigma\kuo{X_r,\mathcal{L}_{X_r}}dB_r \,.
\end{aligned}
\end{equation*}
Then, we obtain 
\begin{equation}\label{eq:pro_3.4_1}
\begin{aligned}
	  \e\zkuo{\int_s^t h^\prime(X_r)dK_r} = \e h(X_t) - \e h(X_s) - \int_s^t \e \mathcal{A}_{\mathcal{L}_{X_r}}h(X_r)dr\,.
\end{aligned}
\end{equation}

Using \eqref{eq:pro_3.4_1}, Lemma \ref{lem:3.3} , and the proof of \cite{briand2020particles}[Proposition 2.7], we can get the density of $dK$ w.r.t the Lebesgue measure is 
$$
k_t = \frac{\left(\mathbb{E}\left[\mathcal{A}_{\mathcal{L}_{X_t}} h\left(X_{t}\right)\right]\right)^{-}}{\mathbb{E}\left[h^{\prime}\left(X_{t}\right)\right]} \mathbbm{1}_{\mathbb{E}\left[h\left(X_t\right)\right]=0}\,.
$$
Then, we complete the proof.
\end{proof}
\section{Propagation of Chaos}\label{sec:4}
We can write the unique solution of the MR-MVSDE \eqref{eq:1.1} as :
\begin{equation}\label{eq:cou_1}
\begin{aligned}
X_t=\xi+\int_0^t b\left(X_s, \mathcal{L}_{X_s}\right) d s+\int_0^t \sigma\left(X_s,\mathcal{L}_{X_s}\right) d B_s+ \sup_{s\leq t}G_0(\law{U_s})\,,
\end{aligned}
\end{equation}
where $\mathcal{L}_{U_s}$ stands for the law of 
$$
U_t = \xi+\int_0^t b\left(X_s, \mathcal{L}_{X_s}\right) d s+\int_0^t \sigma\left(X_s,\mathcal{L}_{X_s}\right) d B_s\,.
$$
We are here interested in the particle approximation of such a system. Our candidates are the particles
\begin{equation}\label{eq:particle}
\begin{aligned}
	  X^{i}_t = \hat{\xi} + \int_0^t b\left(X_s, \frac{1}{n}\sum_{j = 1}^n \delta_{x^j_s}\right) d s+\int_0^t \sigma\left(X_s,\frac{1}{n}\sum_{j = 1}^n \delta_{x^j_s}\right) d B^i_s + \sup_{s\leq t}G_0\kuo{\law{U^{\xi,n}_s}}\,,
\end{aligned}
\end{equation}
where $B^i$ are independent Brownian motions, $\hat{\xi}^i = \xi^i + G_0(\law{U^{\xi}})$, $(\xi^i)_i $ are independent copies of $\xi$, $\law{U^{\xi}_s}$ denotes the empirical distribution at time $s$ of the particles
$$
U_s^{\xi,i}=\hat{\xi}^i+\int_0^s b\left(X_r^i,\frac{1}{n}\sum_{j = 1}^n \delta_{x^j_s}\right) d r+\int_0^s \sigma\left(X_r^i,\frac{1}{n}\sum_{j = 1}^n \delta_{x^j_s}\right) d B_r^i\,, 1 \leq i \leq N\,, \law{U^{\xi}_s}=\frac{1}{n} \sum_{i=1}^n \delta_{U_s^{\xi,i}}\,.
$$
\begin{remark}
	% Mckean-Vlasove SDE with Oblique reflection.

	Similar \cite{briand2020particles} and \cite{briandMEAN}, the previous system of interacting particles can be seen as a multidimensional reflected SDE with oblique reflection. If $h$ is concave, the set
$$
\mathcal{S} := \dkuo{\kuo{x_1,\cdots,x_n}\in \mathbb{R}^n: h(x_1) + \cdots + h(x_n) \geq 0}
$$
 is convex and the system 
 \begin{equation*}
\left\{\begin{array}{l}
  X^{i}_t = \hat{\xi} + \int_0^t b\left(X^i_s, \frac{1}{n}\sum_{j = 1}^n \delta_{x^j_s}\right) d s+\int_0^t \sigma\left(X^i_s,\frac{1}{n}\sum_{j = 1}^n \delta_{x^j_s}\right) d B^i_s + K^n_t, \quad 1\leq i\leq n\\
\frac{1}{n}\sum_{i = 1}^n h(X^i_t)\geq 0,\quad \frac{1}{n}\sum_{i = 1}^n\int_0^th(X^i_t)dK^n_s = 0
\end{array}\right.
\end{equation*}
is the SDE with oblique reflection in convex $\mathcal{S}$.
	\end{remark}
 We use the synchronous coupling technique to prove the propagation of chaos. Specifically, we consider the following system of synchronous coupling:
	$$
		{\begin{cases}
		\bar{X}^i_t = \xi^i + \int_0^tb(\bar{X}^i_s,\law{\bar X^i_s})ds+\int_0^t \sigma(\bar{X}^i_s,\law{\bar {X}^i_s})dB^i_s +\sup_{s\leq t}G_0(\law{\bar {U}^i_s})\\
		X^i_t = \hat\xi^i + \int_0^t b(X^i_s,\frac{1}{n}\sum_{j=1}^n\delta_{X^j_s})ds + \int_0^t \sigma(X^i_s, \frac{1}{n}\sum_{j=1}^n\delta_{X^j_s})dB^i_s+\sup_{s\leq t}G_0(\law{U^{\xi,n}_s})\\
		\bar{U}^{i}_t = \xi^i + \int_0^t b(\bar{X}^i_s,\law{\bar{X}^i_s})ds+ \int_0^t \sigma(\bar{X}^i_s,\law{\bar {X}^i_s})dB^i_s \\
		U^{\xi,i}_t = \hat{\xi}^i + \int_0^t b\left(X^i_s, \frac{1}{n}\sum_{j = 1}^n \delta_{x^j_s}\right) d s+\int_0^t \sigma\left(X^i_s,\frac{1}{n}\sum_{j = 1}^n \delta_{x^j_s}\right) d B^i_s\\
		\law{\bar{U}^i_t} = \operatorname{Law}(\bar{U}^{i}_t),\quad \law{U^{\xi}_t} = \frac{1}{n}\sum_{j=1}^n \delta_{U^{\xi,i}_t}\,,
	\end{cases}}
		$$
		where $\xi^i$ is independent copy of $\xi$.
\begin{theorem}[Propagation of Chaos]\label{thm:poc}
	% TODO 写证明
	Let $T>0$, and suppose that Assumptions \ref{ass:A_1}, \ref{ass:xi_4} and \ref{ass:A_2}   hold. 
	there exists a constant $C$ depending on $b$, $\sigma$, $M$, $m$ and $T$ such that, for each $j \in \dkuo{1,\cdots,n}$, 
		$$
		\e\zkuo{\sup_{t\leq T}\abs{X^j_t - \bar{X}^j_t}^2}\leq C n^{-\frac{1}{2}}\,.
		$$

\end{theorem}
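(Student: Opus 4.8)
The plan is to run a standard synchronous-coupling argument, writing $\Delta^i_t := X^i_t - \bar X^i_t$ and controlling $\e[\sup_{t\leq T}|\Delta^i_t|^2]$ by Gronwall after decomposing the difference into the drift part, the martingale part, and the reflection part $K^n_t - \bar K^i_t := \sup_{s\leq t}G_0(\law{U^{\xi,n}_s}) - \sup_{s\leq t}G_0(\law{\bar U^i_s})$. First I would note that by exchangeability it suffices to estimate the $j=1$ term, and that $\hat\xi^i - \xi^i = G_0(\law{U^\xi})$ contributes a term of order $G_0$ evaluated at an empirical measure against the true law of $\xi$, which by Lemma \ref{lem:2.2} and the rate of convergence of empirical measures in $\mathcal W_1$ on $\mathbb R$ is $O(n^{-1/2})$ (in $L^2$, using Assumption \ref{ass:xi_4} to get $p>4$ moments and the Fournier–Guillin / Rio-type estimate $\e[\mathcal W_1(\frac1n\sum\delta_{\xi^i},\law\xi)^2]\leq Cn^{-1/2}$). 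For the drift and diffusion differences I would apply the Lipschitz Assumption \ref{ass:A_1}, Cauchy–Schwarz, and the Burkholder–Davis–Gundy inequality exactly as in the proof of Theorem \ref{thm:1}, splitting $\mathcal W_2(\frac1n\sum_j\delta_{X^j_s},\law{\bar X^1_s})^2 \leq 2\mathcal W_2(\frac1n\sum_j\delta_{X^j_s},\frac1n\sum_j\delta_{\bar X^j_s})^2 + 2\mathcal W_2(\frac1n\sum_j\delta_{\bar X^j_s},\law{\bar X^1_s})^2$; the first piece is bounded by $\frac1n\sum_j|\Delta^j_s|^2$, which after taking expectation and using exchangeability equals $\e|\Delta^1_s|^2$, and the second piece is the empirical-measure error for i.i.d. samples of $\bar X^1_s$, again $O(n^{-1/2})$ in $L^2$ uniformly in $s\leq T$ thanks to the moment bound in Proposition \ref{pro:3.2}(i) together with Assumption \ref{ass:xi_4}.

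For the reflection term I would use, as in the proof of Theorem \ref{thm:1} and Proposition \ref{pro:3.2}, that $\sup_{t\leq T}|K^n_t - \bar K^1_t|^2 \leq \sup_{t\leq T}|G_0(\law{U^{\xi,n}_t}) - G_0(\law{\bar U^1_t})|^2$, and then bound this by $\frac{M^2}{m^2}$ times $\mathcal W_1$ of the two measures, which splits (just as above) into the pathwise distance between the coupled processes $U^{\xi,i}$ and $\bar U^i$ — controlled by the same drift/diffusion Lipschitz estimates and hence by $\e[\sup_{s\leq T}|\Delta^1_s|^2]$ plus the $\hat\xi$ discrepancy — and the empirical-measure error $\e[\mathcal W_1(\frac1n\sum_j\delta_{\bar U^j_t},\law{\bar U^1_t})^2]$, which is $O(n^{-1/2})$. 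Collecting everything, I would arrive at an inequality of the form
\begin{equation*}
\e\Big[\sup_{t\leq T}|\Delta^1_t|^2\Big]\leq C n^{-1/2} + C\int_0^T \e\Big[\sup_{t\leq r}|\Delta^1_t|^2\Big]\,dr,
\end{equation*}
and Gronwall's lemma finishes the proof on $[0,\tilde T]$ and then on $[0,T]$ by iteration (or directly since the constant is uniform in the length once the interval is bounded by $T$).

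The main obstacle — and the only genuinely nonroutine ingredient — is the quantitative empirical-measure convergence producing the exact exponent $n^{-1/2}$: one needs $\e[\mathcal W_1(\frac1n\sum_{i=1}^n\delta_{Y_i},\law{Y})^2]\leq C n^{-1/2}$ uniformly over the family $Y = \bar X^1_s$ (resp. $\bar U^1_s$, $\xi$), $s\leq T$, which is exactly where Assumption \ref{ass:xi_4} (moments of order $p>4$) and the uniform moment bound of Proposition \ref{pro:3.2}(i) enter; in dimension one this follows from the sharp Fournier–Guillin rate (or an explicit Wasserstein-in-one-dimension computation via quantile functions), and the uniformity in $s$ comes from the uniform-in-$s$ moment bound. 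Everything else — the BDG/Cauchy–Schwarz estimates on the drift and diffusion increments, the Lipschitz bound on $G_0$ from Lemma \ref{lem:2.2}, the reduction of the $\sup$ of $G_0$'s to a single $G_0$, and the final Gronwall step — is a direct transcription of the computations already carried out in the proofs of Theorem \ref{thm:1} and Proposition \ref{pro:3.2}.
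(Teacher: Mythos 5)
Your overall strategy coincides with the paper's proof: synchronous coupling, splitting the difference into drift, martingale and reflection parts, the Lipschitz bound on $G_0$ from Lemma \ref{lem:2.2}, a triangle inequality through the empirical measure of the i.i.d.\ copies $\bar X^j$ (resp.\ $\bar U^j$), the Fournier--Guillin/Carmona--Delarue empirical rate (which is where Assumption \ref{ass:xi_4} enters), and a final Gronwall argument. All of the routine estimates you describe are exactly those carried out in the paper.

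There is, however, one genuine gap, and it sits precisely at the point the paper itself flags as ``not straightforward''. For the reflection term you must control $\mathbb{E}\bigl[\sup_{t\leq T}|K^n_t-\bar K^1_t|^2\bigr]$, and after your triangle inequality the purely i.i.d.\ piece that remains is $\mathbb{E}\bigl[\sup_{t\leq T}\mathcal{W}_1\bigl(\tfrac{1}{n}\sum_{j}\delta_{\bar U^j_t},\mathcal{L}_{\bar U^1_t}\bigr)^2\bigr]$, with the supremum over time \emph{inside} the expectation, because $K^n_t$ is a running supremum of $G_0$ along the random empirical flow. Your justification --- ``the uniformity in $s$ comes from the uniform-in-$s$ moment bound'' --- only yields the pointwise-in-time statement $\sup_{t\leq T}\mathbb{E}\bigl[\mathcal{W}_1(\cdot,\cdot)^2\bigr]\leq Cn^{-1/2}$, which does not imply the expectation of the supremum; interchanging $\sup$ and $\mathbb{E}$ is exactly the difficulty here. (For the drift and diffusion increments the pointwise rate does suffice, since after Cauchy--Schwarz and BDG they enter only through time integrals of expectations; the problem is confined to the $K$ term.) The paper closes this step by invoking the proof of Theorem 3.3(i) in \cite{briand2020particles}, which establishes $\mathbb{E}\bigl[\sup_{s\leq T}\mathcal{W}_1(\cdot,\cdot)^2\bigr]\leq Cn^{-1/2}$ by a dedicated argument combining the pointwise empirical rate with the time regularity of the underlying processes (cf.\ the H\"older estimates of Proposition \ref{pro:3.2}); to make your proof complete you must either import that uniform-in-time estimate or reproduce such an argument, rather than appeal to uniform moment bounds alone.
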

\begin{remark}
	Unlike in \cite{briand2020particles} [Theorem 3.3], improving the regularity of $h$ does not improve the convergence speed of propagation of chaos. This is because in this case, the drift coefficient and diffusion coefficient are both related to the law of the solution. We need to control $$
|b(\bar{X}^i_t,\law{\bar{X}^i_s}) - b({X}^i_t,\frac{1}{n}\sum_{j=1}^n \delta_{X^j_s})|^2+|\sigma(\bar{X}^i_t,\law{\bar{X}^i_s}) - \sigma({X}^i_t,\frac{1}{n}\sum_{j=1}^n \delta_{X^j_s})|^2
$$ and the convergence speed of this part is $n^{-\frac{1}{2}}$.
\end{remark}
\begin{proof}
	Let $t >0$, for $r \leq t$, we have 
\begin{equation}
\begin{aligned}
\left|X_r^j-\bar{X}_r^j\right| \leq & \left|\hat{\xi}^j-\xi^j\right| \\
& +\int_0^r\left|b\left(X_s^j, \frac{1}{n}\sum_{j = 1}^n \delta_{x^j_s}\right)-b\left(\bar{X}_s^j,\mathcal{L}_{\bar{X}^j_s}\right)\right| d s\\
&+\left|\int_0^r\left(\sigma\left(X_s^j,\frac{1}{n}\sum_{j = 1}^n \delta_{x^j_s}\right)-\sigma\left(\bar{X}_s^j,\mathcal{L}_{\bar{X}_s}\right)\right) d B_s^j\right| \\
& +\left|\sup _{s \leq r} G_0\left(\law{U^{\xi}_s}\right)-\sup _{s \leq r} G_0\left(\law{\bar{U}^i_s}\right)\right| .
\end{aligned}
\end{equation}

From the inequality 
$$
\begin{aligned}
\left|\sup _{s \leq r} G_0\left(\law{U^{\xi}_s}\right)-\sup _{s \leq r} G_0\left(\law{\bar{U}^i_s}\right)\right| & \leq \sup _{s \leq r}\left|G_0\left(\law{U^{\xi}_s}\right)-G_0\left(\law{\bar{U}^i_s}\right)\right| \leq \sup _{s \leq t}\left|G_0\left(\law{U^{\xi}_s}\right)-G_0\left(\law{\bar{U}^i_s}\right)\right| \\
& \leq \sup _{s \leq t}\left|G_0\left(\law{U^{\xi}_s}\right)-G_0\left(\law{\tilde{U}_s}\right)\right|+\sup _{s \leq t}\left|G_0\left(\law{\tilde{U}_s} \right)-G_0\left(\law{\bar{U}^i_s}\right)\right|,
\end{aligned}
$$
where $\law{\tilde{U}_s}:=\frac{1}{n}\sum_{i=1}^n\delta_{\bar{U}^i_s}$.
We have 
\begin{equation*}
\begin{aligned}
	  \sup_{r\leq t}\abs{X^j_r - \bar{X}^j_r}\leq I^j_1(r) +  \sup _{s \leq t}\left|G_0\left(\law{U^N_s}\right)-G_0\left(\law{\tilde{U}_s}\right)\right|+\sup _{s \leq t}\left|G_0\left(\law{\tilde{U}_s}\right)-G_0\left(\law{\bar{U}^j_s}\right)\right|,
\end{aligned}
\end{equation*}
where, $I^j_1$ is defined by 
\begin{equation*}
\begin{aligned}
	  I^j_1 = \int_0^t \abs{b\kuo{X^j_s, \frac{1}{n}\sum_{j=1}^n \delta_{X^j_s}}-b\kuo{\bar{X}^j_s,\mathcal{L}_{\bar{X}^j_s}}}ds + \abs{\int_0^t \sigma\kuo{X^j_s, \frac{1}{n}\sum_{i=1}^n \delta_{X^j_s}}-\sigma\kuo{\bar{X}^j_s,\mathcal{L}_{\bar{X}^j_s}}}dB^j_s\,.
\end{aligned}
\end{equation*}
We have 
\begin{equation*}
\begin{aligned}
	  \e\zkuo{|I^j_1(t)|^2} &\leq C\e\zkuo{t\int_0^t\abs{b\kuo{X^j_s, \frac{1}{n}\sum_{j=1}^n \delta_{X^j_s}}-b\kuo{\bar{X}^j_s,\mathcal{L}_{\bar{X}^j_s}}}^2ds}\\
	  &\qquad +C\e\zkuo{\int_0^t\abs{ \sigma\kuo{X^j_s, \frac{1}{n}\sum_{j=1}^n \delta_{X^j_s}}-\sigma\kuo{\bar{X}^j_s,\mathcal{L}_{\bar{X}^j_s}}}^2ds}\\
	  &\leq C t \int_0^t\e\zkuo{\abs{X^j_s - \bar{X}^j_s}^2 + \mathcal{W}_2\kuo{\frac{1}{n}\sum_{j=1}^n\delta_{X^j_s},\mathcal{L}_{\bar{X}^j_s}}^2}ds \\
	  &\qquad + C \int_0^t\e\zkuo{\abs{X^j_s - \bar{X}^j_s}^2 + \mathcal{W}_2\kuo{\frac{1}{n}\sum_{j=1}^n\delta_{X^j_s},\mathcal{L}_{\bar{X}^j_s}}^2}ds\\
	  & = C(1+t)\int_0^t\e\zkuo{\abs{X^j_s-\bar{X}^j_s}^2}ds + C(1+t)\int_0^t\e\zkuo{\mathcal{W}_2\kuo{\frac{1}{n}\sum_{j=1}^n\delta_{X^j_s},\mathcal{L}_{\bar{X}^j_s}}^2}ds\,.
\end{aligned}
\end{equation*}
% Lemma \law{U^N_s}
We have 
$$
\sup _{s \leq t}\left|G_0\left(\law{U^{\xi}_s}\right)-G_0\left(\law{\tilde{U}_s}\right)\right| \leq \frac{M}{m} \sup _{s \leq t} \frac{1}{n} \sum_{j=1}^n\left|U_s^j-\bar{U}_s^j\right| \leq \frac{M}{m} \frac{1}{n} \sum_{j=1}^n \sup _{s \leq t}\left|U_s^j-\bar{U}_s^j\right|\,.
$$

Moreover, taking into account that the variables are exchangeable, the Cauchy-Schwarz inequality implies
$$
\mathbb{E}\left[\sup _{s \leq t}\left|G_0\left(\law{U^{\xi}_s}\right)-G_0\left(\law{\tilde{U}_s}\right)\right|^2\right] \leq \frac{M^2}{m^2} \frac{1}{n} \sum_{j=1}^n \mathbb{E}\left[\sup _{s \leq t}\left|U_s^j-\bar{U}_s^j\right|^2\right]=\frac{M^2}{m^2} \mathbb{E}\left[\sup _{s \leq t}\left|U_s^j-\bar{U}_s^j\right|^2\right]\,.
$$
Since
\begin{equation*}
\begin{aligned}
	  U^j_s - \bar{U}^j_s = \int_0^s b\kuo{X^j_r,\frac{1}{n}\sum_{j = 1}^n\delta_{X^j_r}} - b\kuo{\bar{X}^j_r,\mathcal{L}_{\bar{X}^j_r}}ds + \int_0^s \sigma\kuo{X^j_r,\frac{1}{n}\sum_{i = 1}^n\delta_{X^j_r}} - \sigma\kuo{\bar{X}^j_s,\mathcal{L}_{\bar{X}^j_s}}dB^j_s\,,
\end{aligned}
\end{equation*}
we have 
$$
\e\zkuo{\sup_{s\leq t} 
\abs{G_0(\law{U^{\xi}_s}) - G_0(\law{\bar{U}^j_s})}} 
\leq C\frac{M^2}{m^2}(1+t) \dkuo{\int_0^t\e\zkuo{\abs{X^j_s - \bar{X}^j_s}^2} + \e\zkuo{\mathcal{W}_2\kuo{\frac{1}{n}\sum_{j=1}^n\delta_{X^j_s},\mathcal{L}_{\bar{X}^j_s}}^2}ds}\,.
$$
Next, dealing with the $\e\zkuo{\mathcal{W}_2\kuo{\frac{1}{n}\sum_{i=1}^N\delta_{X^i_s},\mathcal{L}_{\bar{X}^j_s}}^2}$terms. 
By the triangle inequality, we get
\begin{equation*}
\begin{aligned}
\e\zkuo{\mathcal{W}_2\kuo{\frac{1}{n}\sum_{i=1}^n\delta_{X^i_s},\mathcal{L}_{\bar{X}^j_s}}}\leq \e\zkuo{\kuo{\frac{1}{n}\sum_{j = 1}^n\abs{X^j_s - \bar{X}^j_s}^2}^{\frac{1}{2}}+\mathcal{W}_2\kuo{\frac{1}{n}\sum_{j = 1}^n\delta_{\bar{X}^j_s},\mathcal{L}_{\bar{X}^j_s}}}\,.
\end{aligned}
\end{equation*}
Noting that the particles are exchangeable and from \cite{PTMFG}, we have 
$$
\e\zkuo{\mathcal{W}_2\kuo{\frac{1}{n}\sum_{j=1}^n\delta_{X^j_s},\mathcal{L}_{\bar{X}^j_s}}^2} \leq C \e\zkuo{\abs{X^j_s - \bar{X}^j_s}}   + C n^{-\frac{1}{2}}\,.
$$
Consequently, we have 
\begin{equation}
\begin{aligned}
	  \e\zkuo{\sup_{r\leq t}\abs{X^j_r - \bar{X}^j_r}^2}& \leq  2 A \int_0^t\e\zkuo{\sup_{r\leq s}\abs{X^j_r - \bar{X}^j_r}^2}ds\\
   &\qquad+ 4 \e\zkuo{\sup_{s\leq t}\abs{G_0(\law{{U}^{\xi}_s}) - G_0\kuo{\law{\tilde{U}_s}}}^2} + A T n^{-\frac{1}{2}},
\end{aligned}
\end{equation}
where $A = C(1+t) (1 + M^2/m^2)$. 
Apply Gronwall's lemma, we deduce that 
\begin{equation*}
\begin{aligned}
	  \e\zkuo{\sup_{r\leq t}\abs{X^j_r - \bar{X}^j_r}^2}\leq 2Ce^{A}
	  \dkuo{\e\zkuo{\sup_{s\leq t}\abs{G_0(\law{{U}^{\xi}_s}) - G_0\kuo{\law{\tilde{U}_s}}}^2} 
	  + AT n^{-\frac{1}{2}}}\,.
\end{aligned}
\end{equation*}
In view of Lemma \ref{lem:2.2}, we have
$$
\mathbb{E}\left[\sup _{s \leq t}\left|G_0\left(\law{{U}^{\xi}_s}\right)-G_0\left(\law{\tilde{U}_s}\right)\right|^2\right] \leq \frac{1}{m^2} \mathbb{E}\left[\sup _{s \leq t}\left|\int h\left(\bar{G}_0\left(\law{U^{\xi}_s}\right)+\cdot\right)\left(d \law{{U}^\xi_s}-d \law{\tilde{U}_s}\right)\right|^2\right]\,,
$$
which leads to
$$
\mathbb{E}\left[\sup _{r \leq t}\left|X_r^j-\bar{X}_r^j\right|^2\right] \leq C e^{A}\dkuo{ \frac{1}{m^2} \mathbb{E}\left[\sup _{s \leq t}\left|\int h\left(\bar{G}_0\left(\law{U^{\xi}_s}\right)+\cdot\right)\left(d \law{{U}^\xi_s}-d \law{\tilde{U}_s}\right)\right|^2\right] +ATn^{-\frac{1}{2}}}\,.
$$

As $h$ is Lipschitz continuous, the convergence rate is determined by the empirical measure convergence of independent and identically distributed diffusion processes. However, obtaining the standard convergence rate is not straightforward since we require uniform convergence in time. If we only suppose that Assumptions \ref{ass:A_1} and \ref{ass:A_2} holds, we obtain that
$$
\frac{1}{m^2} \mathbb{E}\left[\sup _{s \leq t}\left|\int h\left(\bar{G}_0\left(\law{\tilde{U}_s}\right)+\cdot\right)\left(d \law{{U}^{\xi}_s}-d \law{\tilde{U}_s}\right)\right|^2\right]\leq \frac{M^2}{m^2} \mathbb{E}\left[\sup _{s \leq t} \mathcal{W}_1\left(\law{{U}^{\xi}_s}, \law{\tilde{U}_s}\right)^2\right]\,.
$$

In view of the proof of \cite{briand2020particles}[Theorem 3.3 Part (i)]
, we have 
$$
\e\zkuo{\sup_{s\leq t}\mathcal{W}_1\kuo{\law{\bar{U}^N_s}, \law{U_s}}^2}\leq Cn^{-\frac{1}{2}}\,.
$$
So, we have there exists a constant $C>0$
\begin{equation*}
\begin{aligned}
	  \e\zkuo{\sup_{r\leq t}\abs{X^j_r - \bar{X}^j_r}^2}\leq Ce^A\kuo{C \frac{M^2}{m^2}+ AT}n^{-\frac{1}{2}}\leq C n^{-\frac{1}{2}}\,.
\end{aligned}
\end{equation*}
Then, we complete this proof.
% TODO h平滑时可以提升N的速度
\end{proof}

\section{Asymptotic analysis}\label{sec:5}

\subsection{Stability analysis}
% 对初值稳定
% 对系数稳定
In this subsection, We study stability properties of \eqref{eq:1.1} with respect to the initial condition,  coefficients, and driven process.

Firstly, we establish the stability of \eqref{eq:1.1} with respect to small perturbation of the initial point $\xi$. For simplicity, We denote by  $X^{\xi_n}_t$ as the unique solution  of following equation:
	$$
\left\{\begin{array}{l}
X^{\xi_n}_t=\xi_n+\int_0^t b\left(X^{\xi_n}_s, \mathcal{L}_{X^{\xi_n}_s}\right) d s+\int_0^t \sigma\left(X^{\xi_n}_s,\mathcal{L}_{X^{\xi_n}_s}\right) d B_s+K_t^{\xi_n}, \quad t \geq 0 \\
\mathbb{E}\left[h\left(X^{\xi_n}_t\right)\right] \geq 0, \quad \int_0^t \mathbb{E}\left[h\left(X^{\xi_n}_s\right)\right] d K_s^{\xi_n}=0, \quad t \geq 0
\end{array}\right.\,.
	$$
\begin{theorem}[Stability with respect to initial condition]\label{thm:sric}
Suppose that Assumptions \ref{ass:A_1} and \ref{ass:A_2} hold. The mapping 
$$
\Xi: \mathbb{R} \rightarrow L^2\kuo{\Omega, \mathcal{C}([0,T],\mathbb{R})}
$$
defined by $\Xi(\xi) \rightarrow X_t^\xi$ is continuous.
\end{theorem}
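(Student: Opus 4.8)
The plan is to show that if $\xi_n \to \xi$ in $L^2(\Omega)$, then $\e[\sup_{t\le T}|X_t^{\xi_n} - X_t^\xi|^2] \to 0$, which gives continuity of $\Xi$ at $\xi$ (and since $\xi$ is arbitrary, continuity everywhere). First I would fix $T>0$ and work on $[0,T]$, using the representation from Theorem \ref{thm:1}, namely $K_t^{\xi_n} = \sup_{s\le t} G_0(\law{U_s^{\xi_n}})$ with $U_t^{\xi_n} = \xi_n + \int_0^t b(X_s^{\xi_n},\law{X_s^{\xi_n}})\,ds + \int_0^t \sigma(X_s^{\xi_n},\law{X_s^{\xi_n}})\,dB_s$, and similarly for $\xi$. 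Writing $X_t^{\xi_n} - X_t^\xi = (\xi_n - \xi) + \int_0^t(b(X_s^{\xi_n},\law{X_s^{\xi_n}}) - b(X_s^\xi,\law{X_s^\xi}))\,ds + \int_0^t(\sigma(\cdots)-\sigma(\cdots))\,dB_s + (K_t^{\xi_n} - K_t^\xi)$, I would take $\sup_{t\le T}$, square, use the elementary inequality $(a+b+c+d)^2 \le 4(a^2+b^2+c^2+d^2)$, then take expectations.

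The four resulting terms are handled as follows. The initial-condition term contributes $4\e[|\xi_n-\xi|^2] \to 0$ by hypothesis. The drift term is bounded via Cauchy-Schwarz by $CT\int_0^T \e[|X_s^{\xi_n}-X_s^\xi|^2 + \mathcal{W}_2(\law{X_s^{\xi_n}},\law{X_s^\xi})^2]\,ds$, and since $\mathcal{W}_2(\law{X_s^{\xi_n}},\law{X_s^\xi})^2 \le \e[|X_s^{\xi_n}-X_s^\xi|^2]$, this is $\le 2CT\int_0^T \e[\sup_{r\le s}|X_r^{\xi_n}-X_r^\xi|^2]\,ds$. The diffusion term is treated identically after applying the Burkholder-Davis-Gundy inequality, using the Lipschitz assumption (Assumption \ref{ass:A_1}) on $\sigma$. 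For the reflection term, exactly as in \eqref{eq:K_t}, the Lipschitz property of $G_0$ (Lemma \ref{lem:2.2}) and the fact that $\sup$ over $t$ of a difference of sups is controlled by the sup of the difference give $\sup_{t\le T}|K_t^{\xi_n}-K_t^\xi|^2 \le \frac{M^2}{m^2}\e[\sup_{t\le T}|U_t^{\xi_n}-U_t^\xi|^2]$; then expanding $U_t^{\xi_n}-U_t^\xi$ in the same way (initial condition plus drift plus martingale, no $K$ term) and applying BDG and Cauchy-Schwarz bounds this by $C\e[|\xi_n-\xi|^2] + C(1+T)\int_0^T\e[\sup_{r\le s}|X_r^{\xi_n}-X_r^\xi|^2]\,ds$. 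Collecting everything yields
$$
\e\Big[\sup_{t\le T}|X_t^{\xi_n}-X_t^\xi|^2\Big] \le C\,\e[|\xi_n-\xi|^2] + C\int_0^T \e\Big[\sup_{r\le s}|X_r^{\xi_n}-X_r^\xi|^2\Big]\,ds,
$$
with $C$ depending only on $T,b,\sigma,h$. An application of Gronwall's lemma then gives $\e[\sup_{t\le T}|X_t^{\xi_n}-X_t^\xi|^2] \le C e^{CT}\,\e[|\xi_n-\xi|^2] \to 0$, which is precisely continuity of $\Xi$ in the $L^2(\Omega,\mathcal{C}([0,T],\mathbb{R}))$ norm.

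I do not anticipate a serious obstacle here: the argument is essentially a re-run of the contraction estimate in the proof of Theorem \ref{thm:1}, now carried out between two genuine solutions driven by the same Brownian motion rather than between Picard iterates, with the extra $\xi_n-\xi$ term tracked through. The one point requiring a little care is the treatment of the reflection term: one must not attempt to bound $|K^{\xi_n}-K^\xi|$ by a Wasserstein distance between the solution laws directly, but instead route through $U^{\xi_n}-U^\xi$ as in \eqref{eq:K_t}, because $G_0$ is only Lipschitz on $\mathcal{P}_1$ and the clean Gronwall closure requires the $U$-difference to again be expanded into its drift and martingale parts. A minor bookkeeping subtlety is ensuring all constants are uniform in $n$, which is automatic since the Lipschitz constants $K,m,M$ do not depend on the initial datum; a uniform a priori $L^2$ bound $\sup_n \e[\sup_{t\le T}|X_t^{\xi_n}|^2] < \infty$ (from Proposition \ref{pro:3.2}(i), valid once $\sup_n \e[|\xi_n|^2]<\infty$, which holds for a convergent sequence) guarantees all the integrals above are finite so that Gronwall applies.
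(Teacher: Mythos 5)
Your proposal is correct and follows essentially the same route as the paper's proof: the same decomposition of $X^{\xi_n}-X^\xi$ into initial-condition, drift, martingale and reflection parts, the same Lipschitz/Cauchy--Schwarz/BDG estimates with the Wasserstein distance dominated by the $L^2$ difference, the same control of $\sup_{t\le T}|K^{\xi_n}_t-K^\xi_t|$ through $U^{\xi_n}-U^\xi$ via Lemma \ref{lem:2.2} as in \eqref{eq:K_t}, and a concluding Gronwall argument.
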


Secondly, we study the stability of \eqref{eq:1.1} with respect to small perturbation of the coefficients $b$ and $\sigma$. Let $X_t^n$ be the unique solution of the following equation:
$$
\left\{\begin{array}{l}
X^{n}_t=\xi+\int_0^t b^n\left(X^{n}_s, \mathcal{L}_{X^{n}_s}\right) d s+\int_0^t \sigma^n\left(X^{n}_s,\mathcal{L}_{X^{n}_s}\right) d B_s+K_t^{n}, \quad t \geq 0 \\
\mathbb{E}\left[h\left(X^{n}_t\right)\right] \geq 0, \quad \int_0^t \mathbb{E}\left[h\left(X^{n}_s\right)\right] d K_s^{n}=0, \quad t \geq 0
\end{array}\right.\,.
	$$
\begin{theorem}[Stability with respect to coefficients]\label{thm:src}
Assume that the functions $b(x,\mu)$, $b^n(x,\mu)$, $\sigma^n(x,\mu)$ and $\sigma(x,\mu)$ satisfy Assumptions \ref{ass:A_1} and \ref{ass:A_2}, Further suppose that for each $T>0$, and each compact set $K$ there exists $C>0$ such that
$$
{\lim_{n\rightarrow \infty} \sup _{x \in K}\sup_{\mu \in \mathcal{P}_2(\mathbb{R})} \abs{b^n(x,\mu) - b(x,\mu)} + \abs{\sigma^n(x,\mu) - \sigma(x,\mu)} = 0}\,,
$$
then 
$$
\lim_{n\rightarrow \infty} \e\zkuo{\sup_{t\leq T}\abs{X^n_t - X_t}^2} = 0\,.
$$
\end{theorem}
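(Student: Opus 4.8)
The plan is to run a direct Gronwall-type estimate on the difference $X^n_t - X_t$, exactly paralleling the contraction argument in the proof of Theorem \ref{thm:1}, but now keeping track of the perturbation term $b^n - b$ and $\sigma^n - \sigma$ along the trajectory of $X$. Write $U^n_t = \xi + \int_0^t b^n(X^n_s,\law{X^n_s})\,ds + \int_0^t \sigma^n(X^n_s,\law{X^n_s})\,dB_s$ and $U_t$ analogously, so that $X^n_t = U^n_t + \sup_{s\le t}G_0(\law{U^n_s})$ and $X_t = U_t + \sup_{s\le t}G_0(\law{U_s})$. First I would split each coefficient difference as $b^n(X^n_s,\law{X^n_s}) - b(X_s,\law{X_s}) = \bigl(b^n(X^n_s,\law{X^n_s}) - b(X^n_s,\law{X^n_s})\bigr) + \bigl(b(X^n_s,\law{X^n_s}) - b(X_s,\law{X_s})\bigr)$, using the uniform-on-compacts convergence for the first bracket and the Lipschitz Assumption \ref{ass:A_1} (plus $\mathcal{W}_2(\law{X^n_s},\law{X_s})^2 \le \e|X^n_s - X_s|^2$) for the second; similarly for $\sigma$. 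As in \eqref{eq:K_t}, the reflection difference is controlled via Lemma \ref{lem:2.2} by $\sup_{t\le T}|\sup_{s\le t}G_0(\law{U^n_s}) - \sup_{s\le t}G_0(\law{U_s})| \le \tfrac{M}{m}\,\e[\sup_{t\le T}|U^n_t - U_t|]$, which is again bounded by the same drift and diffusion differences. Combining these with Cauchy–Schwarz and the B–D–G inequality yields
\begin{equation*}
\e\Bigl[\sup_{r\le t}|X^n_r - X_r|^2\Bigr] \le C(1+T)\int_0^t \e\Bigl[\sup_{r\le s}|X^n_r - X_r|^2\Bigr]\,ds + C(1+T)\,\e\Bigl[\int_0^T \delta_n(X_s)^2\,ds\Bigr],
\end{equation*}
where $\delta_n(x) := \sup_{\mu}\bigl(|b^n(x,\mu)-b(x,\mu)| + |\sigma^n(x,\mu)-\sigma(x,\mu)|\bigr)$.

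The remaining work is to show the inhomogeneous term $\e[\int_0^T \delta_n(X_s)^2\,ds]$ tends to $0$. Here I would use a localization argument: fix $R>0$ and split according to whether $|X_s|\le R$ or not. On $\{|X_s|\le R\}$, the hypothesis gives $\sup_{|x|\le R}\delta_n(x) \to 0$, so that contribution is at most $T\cdot\sup_{|x|\le R}\delta_n(x)^2 \to 0$ as $n\to\infty$. On $\{|X_s|>R\}$, I would use the linear growth of $b^n,\sigma^n,b,\sigma$ (a consequence of Assumption \ref{ass:A_1}, with a growth constant uniform in $n$ because all the $b^n$ share the Lipschitz constant $K$ and one needs the bound $|b^n(0,\delta_0)|$ to be uniformly bounded — this is implicitly ensured since $b^n\to b$ at a fixed point), giving $\delta_n(X_s)^2 \le C(1+|X_s|^2)$, hence $\e[\int_0^T \delta_n(X_s)^2\mathbbm{1}_{|X_s|>R}\,ds] \le C\,\e[\int_0^T (1+|X_s|^2)\mathbbm{1}_{|X_s|>R}\,ds]$, which by Proposition \ref{pro:3.2}(i) (giving $\e[\sup_{t\le T}|X_t|^2]<\infty$) and dominated convergence goes to $0$ as $R\to\infty$, uniformly in $n$. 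Choosing $R$ large first and then $n$ large makes $\e[\int_0^T\delta_n(X_s)^2\,ds]$ arbitrarily small. Applying Gronwall's lemma to the displayed inequality then gives $\e[\sup_{t\le T}|X^n_t - X_t|^2] \le C(1+T)e^{C(1+T)T}\,\e[\int_0^T\delta_n(X_s)^2\,ds] \to 0$.

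The main obstacle is the inhomogeneous term: the hypothesis only provides \emph{uniform-on-compacts} convergence of the coefficients, whereas the solution $X$ is unbounded, so one genuinely needs the localization-plus-uniform-integrability step above rather than a naive bound. A secondary subtlety is ensuring the linear-growth constant for the family $\{b^n,\sigma^n\}$ is uniform in $n$; this follows from the shared Lipschitz constant in Assumption \ref{ass:A_1} together with the convergence $b^n(x_0,\mu_0)\to b(x_0,\mu_0)$ at one point, which bounds $\sup_n|b^n(x_0,\mu_0)|$. Everything else — the B–D–G and Cauchy–Schwarz estimates, the Lipschitz bound on the reflection via Lemma \ref{lem:2.2}, and the final Gronwall step — is routine and mirrors the computations already carried out in the proofs of Theorem \ref{thm:1} and Proposition \ref{pro:3.2}.
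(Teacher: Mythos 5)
Your proposal is correct and follows essentially the same route as the paper: split the coefficient difference into a Lipschitz part plus the perturbation $b^n-b$, $\sigma^n-\sigma$, control $\sup_{t\le T}|K^n_t-K_t|$ via Lemma \ref{lem:2.2} by $\tfrac{M}{m}$ times the corresponding estimate on $U^n-U$, and close with Gronwall, arriving at $\mathbb{E}[\sup_{t\le T}|X^n_t-X_t|^2]\le C\lambda_n$ with $\lambda_n=\mathbb{E}\int_0^T(|b^n-b|^2+|\sigma^n-\sigma|^2)(X_s,\mathcal{L}_{X_s})\,ds$. In fact you do more than the paper, which simply "lets $n\to\infty$" at this point; your localization step (split on $\{|X_s|\le R\}$ versus $\{|X_s|>R\}$, use the hypothesis on the compact part and the second-moment bound of Proposition \ref{pro:3.2} on the tail) is exactly what is needed to deduce $\lambda_n\to0$ from convergence that is only uniform on compacts in $x$. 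Two small repairs: on the tail set you should estimate the difference evaluated at the specific measure $\mathcal{L}_{X_s}$ (whose second moments are bounded uniformly in $s$), not $\delta_n(x)=\sup_{\mu\in\mathcal{P}_2(\mathbb{R})}(\cdot)$, since a supremum over all $\mu$ of functions growing linearly in $\mathcal{W}_2(\mu,\delta_0)$ cannot be bounded by $C(1+|x|)$; and your stated decomposition places the perturbation at $(X^n_s,\mathcal{L}_{X^n_s})$ while your Gronwall display uses $\delta_n(X_s)$ --- either choice works, but the version along $X$ (the paper's choice) is cleaner because it avoids needing moment bounds uniform in $n$.
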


Thirdly, we study the stability with respect to the driving processes. We consider the MR-MVSDE driving by continuous semimartingales of the following form:
\begin{equation}\label{eq:s_DP_1}
\left\{\begin{array}{l}
X_t=\xi+\int_0^t b\left(X_s, \mathcal{L}_{X_s}\right) d A_s+\int_0^t \sigma\left(X_s,\mathcal{L}_{X_s}\right) d M_s+K_t, \quad t \geq 0 \\
\mathbb{E}\left[h\left(X_t\right)\right] \geq 0, \quad \int_0^t \mathbb{E}\left[h\left(X_s\right)\right] d K_s=0, \quad t \geq 0
\end{array}\right.\,,
\end{equation}
where $A_t$ is an adapted continuous process of bounded variation and $M_t$ is a continuous local martingale.

Let us consider the following sequence $\dkuo{X^n_t}_{n=1}^{\infty}$ of MR-MVSDE:
\begin{equation}\label{eq:s_DP_2}
\left\{\begin{array}{l}
X^n_t=\xi+\int_0^t b\left(X^n_s, \mathcal{L}_{X^n_s}\right) d A^n_s+\int_0^t \sigma\left(X^n_s,\mathcal{L}_{X^n_s}\right) d M^n_s+K^n_t, \quad t \geq 0 \\
\mathbb{E}\left[h\left(X^n_t\right)\right] \geq 0, \quad \int_0^t \mathbb{E}\left[h\left(X^n_s\right)\right] d K_s=0, \quad t \geq 0
\end{array}\right.\,,
\end{equation}
where $\left(A^n_t\right)$ is a sequence of $\mathcal{F}_t$-adapted continuous process of bounded variation and $M^n_t$ is continuous $\left(\mathcal{F}_t, \mathbb{P}\right)$-local martingales. {The proof of Theorem \ref{thm:1} can demonstrate that \eqref{eq:s_DP_2} is well-posedness. Detailed proof can be found in the appendix.}
\begin{theorem}[Stability with respect to driving processes]\label{thm:srdp}
Assume that the functions $b(x,\mu)$, $\sigma(x,\mu)$ satisfy Assumptions \ref{ass:A_1} and \ref{ass:A_2}, Further suppose that $(A_t,A_t^n,M_t,M_t^n)$ satisfy that
\begin{enumerate}[(i)]
	\item The family $\left(A_t, A_t^n, M_t, M_t^n\right)$ is bounded in $\mathcal{C}([0,T])^{\otimes 4}$,
	\item $\left(M^n_t-M_t\right)$ converges to 0 in probability in $\mathcal{C}([0,T])$,
	\item The total variation $\left(A^n_t-A_t\right)$ converges to 0 in probability.
\end{enumerate}
	Then
	$$
\lim _{n \rightarrow \infty} \e\left[\sup _{t<T}\left|X_t^n-X_t\right|^2\right]=0\,,
$$
where $(X^n_t )$ and $(X_t)$ are, respectively, solutions of \eqref{eq:s_DP_2} and \eqref{eq:s_DP_1}.
\end{theorem}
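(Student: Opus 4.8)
The plan is to exploit, exactly as in Theorem~\ref{thm:1}, that the mean reflection is a Lipschitz functional of the law of the ``forward'' part of the equation, so that the whole problem reduces to a stability estimate for the semimartingale-driven McKean--Vlasov equation obeyed by that forward part. Set
\[
U^n_t=\xi+\int_0^t b(X^n_s,\law{X^n_s})\,dA^n_s+\int_0^t\sigma(X^n_s,\law{X^n_s})\,dM^n_s,
\]
and define $U_t$ analogously for \eqref{eq:s_DP_1}. By the analogue of Theorem~\ref{thm:1} (its proof adapts verbatim; see the appendix) one has $K^n_t=\sup_{s\le t}G_0(\law{U^n_s})$, $K_t=\sup_{s\le t}G_0(\law{U_s})$, and $X^n_t-X_t=(U^n_t-U_t)+(K^n_t-K_t)$. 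Since $K^n,K$ are deterministic and $G_0$ is Lipschitz (Lemma~\ref{lem:2.2}),
\[
\sup_{t\le T}|K^n_t-K_t|\le\sup_{t\le T}|G_0(\law{U^n_t})-G_0(\law{U_t})|\le\tfrac{M}{m}\sup_{t\le T}\mathcal{W}_1(\law{U^n_t},\law{U_t})\le\tfrac{M}{m}\,\e\Big[\sup_{t\le T}|U^n_t-U_t|\Big],
\]
so that $\e[\sup_{t\le T}|X^n_t-X_t|^2]\le 2(1+M^2/m^2)\,\e[\sup_{t\le T}|U^n_t-U_t|^2]$, and likewise $\mathcal{W}_2(\law{X^n_s},\law{X_s})^2\le\e|X^n_s-X_s|^2\le 2(1+M^2/m^2)\,\e[\sup_{r\le s}|U^n_r-U_r|^2]$. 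Hence it suffices to prove $\e[\sup_{t\le T}|U^n_t-U_t|^2]\to 0$.

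Next I would split
\[
U^n_t-U_t=\mathrm{I}^n_t+\mathrm{II}^n_t+R^{n,A}_t+R^{n,M}_t,
\]
where $\mathrm{I}^n_t=\int_0^t[b(X^n_s,\law{X^n_s})-b(X_s,\law{X_s})]\,dA^n_s$ and $\mathrm{II}^n_t=\int_0^t[\sigma(X^n_s,\law{X^n_s})-\sigma(X_s,\law{X_s})]\,dM^n_s$ are the ``coefficient'' terms, while $R^{n,A}_t=\int_0^t b(X_s,\law{X_s})\,d(A^n_s-A_s)$ and $R^{n,M}_t=\int_0^t\sigma(X_s,\law{X_s})\,d(M^n_s-M_s)$ are the ``driver'' terms. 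Using the Lipschitz bound of Assumption~\ref{ass:A_1}, Cauchy--Schwarz for $\mathrm{I}^n$ and Burkholder--Davis--Gundy for $\mathrm{II}^n$, the uniform (in $n$) a priori $L^2$-bounds on $\sup_{t\le T}|X^n_t|$ and $\sup_{t\le T}|X_t|$ (the analogue of Proposition~\ref{pro:3.2}(i), available because the total variation of $A^n$ and the bracket of $M^n$ are uniformly bounded, which is the content we read into (i)), and the reduction above, one gets
\[
\e\Big[\sup_{r\le t}\big(|\mathrm{I}^n_r|^2+|\mathrm{II}^n_r|^2\big)\Big]\le C\int_0^t\e\Big[\sup_{r\le s}|U^n_r-U_r|^2\Big]\,d\rho_s
\]
for a finite measure $\rho$ built from those uniformly bounded variations. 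A Gronwall argument for Stieltjes measures then reduces the theorem to showing $\e[\sup_{t\le T}(|R^{n,A}_t|^2+|R^{n,M}_t|^2)]\to 0$.

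The bounded-variation driver term is straightforward: $\sup_{t\le T}|R^{n,A}_t|\le\big(\sup_{s\le T}|b(X_s,\law{X_s})|\big)\cdot\mathrm{Var}_{[0,T]}(A^n-A)$, whose first factor lies in $L^2$ by the linear growth of $b$ and the moment bound on $\sup_{s\le T}|X_s|$, and whose second factor tends to $0$ in probability by (iii) while staying uniformly bounded by (i); since the square of the product is dominated by an integrable random variable not depending on $n$, dominated convergence gives $\e[\sup_{t\le T}|R^{n,A}_t|^2]\to 0$. The martingale driver term $R^{n,M}$ is the genuine difficulty, because convergence of $\int H\,dM^n$ to $\int H\,dM$ does \emph{not} follow from the convergence of $M^n-M$ to $0$ in probability in $\mathcal{C}([0,T])$ alone. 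I would treat it by approximating the continuous adapted integrand $H_s:=\sigma(X_s,\law{X_s})$ by a step process $H^\pi_s=\sum_i H_{t_i}\mathbbm{1}_{(t_i,t_{i+1}]}(s)$ along a partition $\pi$ of $[0,T]$, and writing $R^{n,M}_t=\int_0^t(H_s-H^\pi_s)\,d(M^n-M)_s+\int_0^t H^\pi_s\,d(M^n-M)_s$. For fixed $\pi$ the second integral equals the finite sum $\sum_i H_{t_i}\big[(M^n-M)_{t_{i+1}\wedge t}-(M^n-M)_{t_i\wedge t}\big]$, whose supremum over $t\le T$ is bounded by $2\big(\sum_i|H_{t_i}|\big)\,\|M^n-M\|_{\infty,[0,T]}$, which converges to $0$ in probability by (ii) and, its square being dominated via (i) by an integrable random variable independent of $n$, also in $L^2$; the approximation error is made small uniformly in $n$ by Burkholder--Davis--Gundy together with the uniform bound on $\langle M^n\rangle_T+\langle M\rangle_T$ and the path-continuity of $H$, upon refining $\pi$. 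Letting first $n\to\infty$ and then $\pi$ refine yields $\e[\sup_{t\le T}|R^{n,M}_t|^2]\to 0$; combining this with the previous steps completes the proof.

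\textbf{Main obstacle.} The crux is precisely this last point: making the step-process approximation uniform in $n$ needs a uniform control of the brackets $\langle M^n\rangle$ (a uniform tightness / uniformly-controlled-variations type hypothesis on the drivers), which is the substantive content of assumption (i). Alternatively, the required convergence of stochastic integrals can simply be quoted from the classical stability theory for semimartingale-driven (mean) reflected SDEs, in the spirit of Falkowski--S\l{}omi\'nski~\cite{falkowski2021mean}.
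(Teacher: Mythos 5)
Your proposal is correct in substance and shares the paper's overall skeleton: reduce $K^n-K$ to $U^n-U$ via the Lipschitz property of $G_0$ (Lemma \ref{lem:2.2}), split $U^n-U$ into coefficient-difference terms and driver-difference terms, and close with a Stieltjes/stochastic Gronwall argument driven by $dA^n_s+d\langle M^n\rangle_s$. Where you genuinely diverge is the treatment of the driver terms, and this is the more rigorous route. The paper absorbs both driver errors into a single quantity $e_n$ that contains $\int_0^t|\sigma(X_s,\mathcal{L}_{X_s})|\,d|M^n_s-M_s|$ — i.e.\ it treats the martingale difference as if it had finite total variation — and then simply asserts that hypotheses (i)–(iii) give $e_n\to 0$; no argument is offered for the martingale part, which is exactly the point you flag as the crux (uniform convergence in probability of $M^n-M$ alone does not yield convergence of the stochastic integrals). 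Your step-process approximation of $H_s=\sigma(X_s,\mathcal{L}_{X_s})$, with the finite-sum term controlled by $\|M^n-M\|_{\infty}$ and the remainder by Burkholder--Davis--Gundy plus a uniform bound on the brackets extracted from (i), is the standard UT/UCV-type argument (Kurtz--Protter, S\l{}omi\'nski) and fills precisely the gap the paper leaves open. One caveat in your sketch: the uniform-in-$n$ smallness of the BDG remainder, as you set it up via Cauchy--Schwarz, needs $\e\bigl[\sup_{s\le T}|H_s-H^{\pi}_s|^4\bigr]$ (hence fourth moments of $\sup_t|X_t|$, i.e.\ Assumption \ref{ass:xi_4} or a truncation/localization of $H$) together with $\e[\langle M^n\rangle_T^2]$ bounded, the latter following from (i) if ``bounded in $\mathcal{C}([0,T])^{\otimes 4}$'' is read as an almost sure uniform bound; this is a fixable technicality, and indeed the paper itself never addresses it. Your alternative of quoting the semimartingale stability theory of Falkowski--S\l{}omi\'nski is also consistent with how the paper implicitly leans on the literature.
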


\subsubsection{Proofs of Theorem \ref{thm:sric}-\ref{thm:srdp}}
\begin{proof}[Proof of Theorem \ref{thm:sric}]

	Let $\{\xi_i\}_{i=1}^n$ be a sequence in $\mathbb{R}$ and $\lim_{i\rightarrow \infty} \xi_i = \xi$. We will show that 
	$$
	\lim_{n\rightarrow \infty} \e\zkuo{\sup_{t\leq T}\abs{X^{\xi_n}_t - X^\xi_t}^2} = 0.
	$$
	That means the mapping $\Xi$ is continuous.
	
	Apply the Lipschitz condition of $b$ and $\sigma$, and Cauchy-Schwarz inequality, we have 
	\begin{equation*}
\begin{aligned}
	  \e\zkuo{\sup_{t\leq T}\abs{X^{\xi_n}_t - X^{\xi}_t}^2} &\leq 4\abs{\xi_n - \xi}^2 + 4\e\zkuo{\sup_{t\leq T}\int_0^t\abs{b\kuo{X_s^{\xi_n},\mathcal{L}_{X_s^{\xi_n}}}-b\kuo{X_s^{\xi},\mathcal{L}_{X_s^{\xi}}}}ds}^2\\
	  &\qquad +4\e\zkuo{\sup_{t\leq T} \int_0^t \abs{\sigma\kuo{X_s^{\xi_n},\mathcal{L}_{X_s^{\xi_n}}}-\sigma\kuo{X_s^{\xi},\mathcal{L}_{X_s^{\xi}}}}dB_s}^2 \\&\qquad  + 4 \sup_{t\leq T}\abs{K^{\xi_n}_t - K^\xi_t}^2\\
	  &\leq 4\abs{\xi_n - \xi}^2 + 4 T\e\zkuo{\int_0^t \abs{b\kuo{X_s^{\xi_n},\mathcal{L}_{X_s^{\xi_n}}}-b\kuo{X_s^{\xi},\mathcal{L}_{X_s^{\xi}}}}^2ds}\\
	  &\qquad +4C\e\zkuo{\int_0^t \abs{\sigma\kuo{X_s^{\xi_n},\mathcal{L}_{X_s^{\xi_n}}}-\sigma\kuo{X_s^{\xi},\mathcal{L}_{X_s^{\xi}}}}^2ds}  
   \\
   &\qquad + 4 \sup_{t\leq T}\abs{K^{\xi_n}_t - K^\xi_t}^2\\
	  &\leq 4|\xi_n - \xi| + 4TC \e\zkuo{\int_0^t \abs{X^{\xi_n}_s - X^\xi_s}^2 + \mathcal{W}_2\kuo{\mathcal{L}_{X^{\xi_n}_s},\mathcal{L}_{X^\xi_s}}^2ds} \\
	  &\qquad+ 4C\e\zkuo{\int_0^t \abs{X^{\xi_n}_s - X^\xi_s}^2 + \mathcal{W}_2\kuo{\mathcal{L}_{X^{\xi_n}_s},\mathcal{L}_{X^\xi_s}}^2ds}  \\
   &\qquad + 4 \sup_{t\leq T}\abs{K^{\xi_n}_t - K^\xi_t}^2\,,
	  \end{aligned}
\end{equation*}
and 
\begin{equation*}
\begin{aligned}
	  	\sup_{t\leq T}\abs{K^{\xi_n}_t - K^\xi_t}^2&\leq \frac{M^2}{m^2}\e\zkuo{\sup_{r\in [0,t]}\abs{U^{\xi_n}_r - U_r^\xi}^2} \\
	  	&\leq \frac{M^2}{m^2}\dkuo{4|\xi_n - \xi| + (4T+1)C \e\zkuo{\int_0^t \abs{X^{\xi_n}_s - X^\xi_s}^2 + \mathcal{W}_2\kuo{\mathcal{L}_{X^{\xi_n}_s},\mathcal{L}_{X^\xi_s}}^2ds} }
\end{aligned}
\end{equation*}
Apply Gronwall's lemma, we have 
\begin{equation}
\begin{aligned}
	\e\zkuo{\sup_{t\leq T}\abs{X^{\xi_n}_t - X^{\xi}_t}^2} \leq 4\exp\dkuo{C(1+\frac{M^2}{m^2})}
	\end{aligned}|\xi_n - \xi|^2\,.
\end{equation}
Letting $n$ goes to infinity, we complete this proof.

\end{proof}
\begin{proof}[Proof of Theorem \ref{thm:src}]
	Apply the Lipschitz condition of $b^n$ and $\sigma^n$, and Cauchy-Schwarz inequality ,we have 
	\begin{equation*}
\begin{aligned}
	  \e\zkuo{\sup_{t\leq T}\abs{X^n_t - X_t}^2}&\leq 4\e\zkuo{\int_0^t b^n\kuo{X^n_s,\mathcal{L}_{X^n_s}}-b^n\kuo{X_s,\mathcal{L}_{X_s}}ds}^2\\
	  &\qquad +4\e\zkuo{\int_0^t b^n\kuo{X_s,\mathcal{L}_{X_s}}-b\kuo{X_s,\mathcal{L}_{X_s}}ds}^2\\
	  &\qquad +4\e\dkuo{\sup_{t\leq T}\zkuo{\int_0^t \sigma^n\kuo{X^n_s,\mathcal{L}_{X^n_s}}-\sigma^n\kuo{X_s,\mathcal{L}_{X_s}}dB_s}^2}\\
	  &\qquad +4\e\dkuo{\sup_{t\leq T}\zkuo{\int_0^t \sigma^n\kuo{X_s,\mathcal{L}_{X_s}}-\sigma\kuo{X_s,\mathcal{L}_{X_s}}dB_s}^2} +4\sup_{t\leq T}\abs{K^n_t - K_t}^2\\
	  &\leq 4(CT+C)\int_0^t\e\zkuo{|X^n_s-X_s|^2} + \mathcal{W}_2\kuo{\mathcal{L}_{X^n_s},\mathcal{L}_{X_s}}^2ds +C \lambda_n \\
   &\qquad+4\sup_{t\leq T}\abs{K^n_t - K_t}^2,
\end{aligned}
\end{equation*}
where 
$$
\lambda_n = \e\zkuo{\int_0^t \abs{b^n(X_s,\mathcal{L}_{X_s}) - b(X_s,\mathcal{L}_{X_s})}^2  + \abs{\sigma^n(X_s,\mathcal{L}_{X_s}) - \sigma(X_s,\mathcal{L}_{X_s})}^2ds}.
$$
Then, we control  $\sup_{t\leq T}\abs{K^{n}_t - K_t}^2$:
\begin{equation*}
\begin{aligned}
	  	\sup_{t\leq T}\abs{K^{n}_t - K_t}^2&\leq \frac{M^2}{m^2}\e\zkuo{\sup_{r\in [0,t]}\abs{U^{n}_r - U_r}^2} \\
	  	&\leq \frac{M^2}{m^2}4(CT+C)\int_0^t\e\zkuo{|X^n_s-X_s|^2} + \mathcal{W}_2\kuo{\mathcal{L}_{X^n_s},\mathcal{L}_{X_s}}^2ds +C \lambda_n \,.
	  	\end{aligned}
\end{equation*}
Apply Gronwall's lemma, we have 
\begin{equation*}
\begin{aligned}
\e\zkuo{\sup_{t\leq T}\abs{X^n_t - X_t}^2}\leq C(1+\frac{M^2}{m^2})\exp\dkuo{8C(1+T)(1+\frac{M^2}{m^2})}\lambda_n\,.
\end{aligned}
\end{equation*}
Letting $n$ go to infinity, we complete this proof.

\end{proof}
\begin{proof}[Proof of Theorem \ref{thm:srdp}]
	Apply the Lipschitz condition of $b$ and $\sigma$ and Cauchy-Schwarz inequality, we have 
	\begin{equation}\label{eq:th_5_3_1}
\begin{aligned}
	  \e\zkuo{\sup_{t\leq T}|X^n_t - X_t|^2}&\leq 4\e\dkuo{\sup_{t\leq T}\zkuo{\int_0^t \abs{b\kuo{X^n_s,\mathcal{L}_{X^n_s}}-b(X_s,\mathcal{L}_{X_s})}dA^n_s}^2} \\
	  &\qquad  + 4\e\dkuo{\sup_{t\leq T}\zkuo{\int_0^t \abs{\sigma\kuo{X^n_s,\mathcal{L}_{X^n_s}}-\sigma(X_s,\mathcal{L}_{X_s})}dM^n_s}^2}\\
	  &\qquad +4\e\zkuo{\sup_{t\leq T}\kuo{\int_0^t |b(X_s,\mathcal{L}_{X_s})|d|A^n_s-A_s|}^2} \\
	  &\qquad +4\e\zkuo{\sup_{t\leq T}\kuo{\int_0^t |\sigma(X_s,\mathcal{L}_{X_s})|d|M^n_s-M_s|}^2} +4\sup_{t\leq T}|K^n_t - K_t|^2\\
	  &\leq C\int_0^T\e\zkuo{\sup_{s\leq t}|X^n_s-X_s|^2}+\mathcal{W}_2\kuo{\mathcal{L}_{X^n_s},\mathcal{L}_{X_s}}^2 \kuo{dA^n_s + d\brackt{M^n}_s}+e_n\\
	  &\qquad +4\sup_{t\leq T}|K^n_t - K_t|^2\,,
\end{aligned}
\end{equation}
where
$$
e_n = 4\e\zkuo{\sup_{t\leq T}\kuo{\int_0^t |b(X_s,\mathcal{L}_{X_s})|d|A^n_s-A_s|}^2}+4\e\zkuo{\sup_{t\leq T}\kuo{\int_0^t |\sigma(X_s,\mathcal{L}_{X_s})|d|M^n_s-M_s|}^2}\,.
$$
Moreover, 
\begin{equation}\label{eq:th_5_3_2}
\begin{aligned}
	  	\sup_{t\leq T}\abs{K^{n}_t - K_t}^2&\leq \frac{M^2}{m^2}\e\zkuo{\sup_{r\in [0,t]}\abs{U^{n}_r - U_r}^2} \\
	  	&\leq \frac{M^2}{m^2}C\int_0^t\e\zkuo{|X^n_s-X_s|^2} + \mathcal{W}_2\kuo{\mathcal{L}_{X^n_s},\mathcal{L}_{X_s}}^2ds +C e_n 
	  	\end{aligned}
\end{equation}
 Combining \eqref{eq:th_5_3_1} and \eqref{eq:th_5_3_2}, we have 
\begin{equation*}
\begin{aligned}
  \e\zkuo{\sup_{t\leq T}|X^n_t - X_t|^2} \leq C \int_0^T\e\zkuo{\sup_{t\leq T}|X^n_t - X_t|^2}\kuo{dA^n_s + d\brackt{M^n}_s}+Ce_n\,.
\end{aligned}
\end{equation*}
Since $A^n_s+ \brackt{M^n}_s$ is an increasing process, then according to the Stochastic Gronwall lemma, we obtain
$$
 \e\zkuo{\sup_{t\leq T}|X^n_t - X_t|^2} \leq C e_n \e\zkuo{A^n_T+ \brackt{M^n}_T}<\infty,
$$ 
using the assumptions of $(A_t,A_t^n,M_t,M_t^n)$, we have 
$$
\lim_{n\rightarrow \infty} e_n = 0\,.
$$
Therefore,
$$
\lim\limits_{n\rightarrow \infty}\e\zkuo{\sup_{t\leq T}|X^n_t - X_t|^2} = 0\,.
$$
\end{proof}
\subsection{Large Deviation Principle}
For any $\varepsilon \in (0,1]$, let $X^{\varepsilon}=\left\{X_t^{\varepsilon}, t \in[0, T]\right\}$ be the unique strong solution to the following MR-MVSDE:
\begin{equation}\label{eq:4.1}
\left\{\begin{array}{l}
X^{\varepsilon}_t=\xi+\int_0^t b\left(X_s^{\varepsilon}, \mathcal{L}_{X_s^{\varepsilon}}\right) d s+\sqrt{\varepsilon}\int_0^t \sigma\left(X_s^{\varepsilon},\mathcal{L}_{X_s^{\varepsilon}}\right) d B_s+K_t^\varepsilon, \quad t \geq 0 \\
\mathbb{E}\left[h\left(X_t^{\varepsilon}\right)\right] \geq 0, \quad \int_0^t \mathbb{E}\left[h\left(X_s^{\varepsilon}\right)\right] d K_s^\varepsilon=0, \quad t \geq 0
\end{array}\right.\,.
\end{equation}
In this subsection, we mainly consider the LDP for $X^\varepsilon$ as $\varepsilon$ tending to 0. 

\begin{definition}[Large deviation]{}
 A family $\left\{X^{\varepsilon}\right\}_{\varepsilon>0}$ of $\mathcal{E}$-valued random variable is said to satisfy the large deviation principle on $\mathcal{E}$, with the good rate function $I$ and with the speed function $\lambda(\varepsilon)$ which is a sequence of positive numbers tending to $+\infty$ as $\varepsilon \rightarrow 0$, if the following conditions hold:
 \begin{enumerate}[(i)]
 	\item for each $M<\infty$, the level set $\{x \in \mathcal{E}: I(x) \leq M\}$ is a compact subset of $E$;
 	\item for each closed subset $F$ of $\mathcal{E}, \limsup _{\varepsilon \rightarrow 0} \frac{1}{\lambda(\varepsilon)} \log \mathbb{P}\left(X^{\varepsilon} \in F\right) \leq-\inf _{x \in F} I(x)$;
 	\item for each open subset $G$ of $\mathcal{E}, \liminf _{\varepsilon \rightarrow 0} \frac{1}{\lambda(\varepsilon)} \log \mathbb{P}\left(X^{\varepsilon} \in G\right) \geq-\inf _{x \in G} I(x)$.
 \end{enumerate}
\end{definition}

From Theorem \ref{thm:1}, we know that there exists a unique solution for the following oblique reflected  Ordinary differential equation (ODE for short)
\begin{equation}\label{eq:4.2}
\left\{\begin{array}{l}
\psi_t = \xi + \int_0^t b(\psi_s, \delta_{\psi_s})ds + K^\psi_t, \quad t \geq 0\\
h(\psi_t)\geq 0\quad \int_0^t h(\psi_s)dK^{\psi}_s = 0
\end{array}\right. ,
\end{equation}
where, 
$$
K^{\psi}_s = \sup_{r\leq s} \inf\dkuo{x\geq 0, h(x + U_s^{\psi_s}) \geq 0},
$$
and $\kuo{U^\psi}_{0\leq t\leq T}$ is the process defined by 
\begin{equation*}
\begin{aligned}
	  U^\psi_t = \xi + \int_0^tb\kuo{\psi_s, \delta_{\psi_s}}ds\,.
\end{aligned}
\end{equation*}

We denote $\psi$ as the solution of \eqref{eq:4.2}. For any $h \in L^2\kuo{[0,T];\mathbb{R}}$, consider the so-called skeleton equation:
\begin{equation}\label{eq:ske_eq}
\begin{aligned}
\left\{\begin{array}{l}
	  Y^h_t = \xi + \int_0^t b\kuo{Y^h_s,\delta_{\psi_s}}ds + \int_0^t \sigma\kuo{Y^h_s,\delta_{\psi_s}}h(s)ds + K^h_t\\
	  h(Y^h_t)\geq 0, \quad \int_0^t h(Y^h_s)dK^h_s = 0
	  \end{array}\right. \,.
\end{aligned}
\end{equation}
\begin{proposition}\label{pro:5.1}
	Under Assumptions \ref{ass:A_1} and \ref{ass:A_2},  there exists a unique strong solution to \eqref{eq:ske_eq}.
\end{proposition}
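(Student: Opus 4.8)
The plan is to adapt the Picard/contraction argument from the proof of Theorem \ref{thm:1}. Once the path $\psi$ solving \eqref{eq:4.2} is fixed, the maps $x\mapsto b(x,\delta_{\psi_s})$ and $x\mapsto\sigma(x,\delta_{\psi_s})$ are, by Assumption \ref{ass:A_1}, Lipschitz in $x$ uniformly in $s\in[0,T]$ and of linear growth, so \eqref{eq:ske_eq} becomes a mean-reflected equation with time-dependent Lipschitz coefficients, additionally driven by the control $h\in L^2([0,T];\mathbb{R})$. On the space $\mathcal{C}^2$ introduced in the proof of Theorem \ref{thm:1} I would define the solution map $\Gamma:\widetilde Y\mapsto Y^h$ through
\begin{equation*}
U^h_t=\xi+\int_0^t b\kuo{\widetilde Y_s,\delta_{\psi_s}}\,ds+\int_0^t \sigma\kuo{\widetilde Y_s,\delta_{\psi_s}}h(s)\,ds,\qquad K^h_t=\sup_{s\leq t}G_0\kuo{\law{U^h_s}},\qquad \Gamma(\widetilde Y)_t=U^h_t+K^h_t.
\end{equation*}
The process $U^h$ is well defined and continuous: since $\sigma$ has linear growth, the Cauchy--Schwarz inequality gives $\int_0^T\abs{\sigma(\widetilde Y_s,\delta_{\psi_s})}\abs{h(s)}\,ds\leq\kuo{\int_0^T\abs{\sigma(\widetilde Y_s,\delta_{\psi_s})}^2\,ds}^{1/2}\norm{h}_{L^2([0,T])}<\infty$. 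That $(\Gamma(\widetilde Y),K^h)$ satisfies $h(\Gamma(\widetilde Y)_t)\geq 0$ and the Skorokhod condition $\int_0^t h(\Gamma(\widetilde Y)_s)\,dK^h_s=0$ is checked exactly as in Steps~1--2 of the proof of Theorem \ref{thm:1}, using continuity of $h$ and the fact that $\bar{G}_0(\nu)$ is the inverse of $H(\cdot,\nu)$ at $0$.

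For the contraction, take $\widetilde Y,\widetilde Y'\in\mathcal{C}^2$ with associated $U^h,K^h$ and $U^{h\prime},K^{h\prime}$. Combining the Lipschitz bounds on $b$ and $\sigma$, the Cauchy--Schwarz inequality (with $ds$ for the drift term and with $\abs{h(s)}^2\,ds$ for the $\sigma h$ term), and the Lipschitz property of the reflection --- Lemma \ref{lem:2.2} together with $\mathcal{W}_1(\law{U^h_s},\law{U^{h\prime}_s})\leq\e\abs{U^h_s-U^{h\prime}_s}$ --- I expect an estimate of the form
\begin{equation*}
\e\zkuo{\sup_{t\leq T}\abs{\Gamma(\widetilde Y)_t-\Gamma(\widetilde Y')_t}^2}\leq C(1+T)\kuo{T+\int_0^T\abs{h(s)}^2\,ds}\,\e\zkuo{\sup_{t\leq T}\abs{\widetilde Y_t-\widetilde Y'_t}^2},
\end{equation*}
with $C$ depending only on the Lipschitz constant $K$ and on $M/m$. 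Since $s\mapsto\int_0^s\abs{h(r)}^2\,dr$ is absolutely continuous, one can choose $T_0>0$ with $C(1+T_0)\kuo{T_0+\int_0^{T_0}\abs{h}^2}<1$, so $\Gamma$ is a strict contraction on $[0,T_0]$; a unique global solution on $[0,T]$ then follows by iterating the construction finitely many times (the admissible step length being governed by the mass of $\abs{h}^2$ on the current sub-interval). Alternatively, one may work on $[0,T]$ at once with the weighted norm $\e\zkuo{\sup_{t\leq T}e^{-\lambda\Lambda_t}\abs{Y_t}^2}$, $\Lambda_t:=t+\int_0^t(1+\abs{h(s)}^2)\,ds$, choosing $\lambda$ large enough to make $\Gamma$ a strict contraction in one shot.

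The only point requiring care beyond a near-verbatim copy of the proof of Theorem \ref{thm:1} is that the control $h$ lies in $L^2([0,T])$ rather than $L^\infty$; this is what makes the one-step contraction constant non-uniform in the length of the interval and forces the absolute-continuity/finite-covering argument or the weighted-norm device above. Everything else --- well-posedness of $\Gamma$, the Skorokhod verification, and patching the local solutions into a global one --- is routine and uses only Assumptions \ref{ass:A_1} and \ref{ass:A_2} through the properties of $H$ and $G_0$ already recorded in Section \ref{sec:2}.
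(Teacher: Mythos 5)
Your proposal is correct and takes essentially the same route as the paper: the paper's proof simply freezes $\psi$, absorbs the control into the time-dependent drift $\tilde b(y,t)=b(y,\delta_{\psi_t})+\sigma(y,\delta_{\psi_t})h(t)$, and invokes the standard Picard/contraction argument of Theorem \ref{thm:1}, with the reflection term handled through $G_0$ and Lemma \ref{lem:2.2} exactly as you do. The only difference is one of detail: the paper leaves the iteration as a remark, while you spell out the Skorokhod verification and the treatment of the merely $L^2$-in-time coefficient (small-interval covering or weighted norm), which is the right way to make the sketched argument rigorous.
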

\begin{proof}
	{Let $\tilde{b}(y,t) =b\left( y, \delta_{\psi_t}\right)+\sigma\left(y, \delta_{\psi_t}\right) h(t)$. 
	Since $b$ and $\sigma$ satisfy assumption \ref{ass:A_1}, and $h \in L^2([0,T];\mathbb{R})$, we can easily prove the existence and uniqueness of solution for \ref{eq:ske_eq} through the standard argument (Picard iteration).
	%Since $b$, $\sigma$ satisfies Assumption \ref{ass:A_1} and $h\in L^2([0,T];\mathbb{R})$, by  standard arguments (picard 
	 it is easy show \eqref{eq:ske_eq} has unique solution.
	 } 
\end{proof}
\begin{theorem}[Large deviation principle]\label{thm:LDP}
Let Assumptions \ref{ass:A_1} and \ref{ass:A_2} hold and $X^{\varepsilon}$ be the unique strong solution to \eqref{eq:4.1}. Then the family of $\left\{X^{\varepsilon}\right\}_{\varepsilon>0}$ satisfies a LDP on the space $\mathcal{X}$ with rate function
$$
I(\phi):=\inf _{\left\{h \in L^2\left([0, T], \mathbb{R}\right): \phi=Y^h\right\}} \frac{1}{2} \int_0^T|h(t)|^2 \mathrm{~d} t, \quad \phi \in \mathcal{X}
$$
with the convention $\inf \{\emptyset\}=+\infty$, here $Y^h \in \mathcal{X}$ solves equation \eqref{eq:ske_eq}.
\end{theorem}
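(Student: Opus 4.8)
The plan is to run the weak convergence method. Solving the uncontrolled equation \eqref{eq:4.1} first produces the deterministic flow of marginals $\mu^{\varepsilon}_{t}:=\law{X^{\varepsilon}_{t}}$ and the deterministic reflection $K^{\varepsilon}$; I would freeze the measure argument to $\mu^{\varepsilon}$ and argue as in the proof of Theorem \ref{thm:1} and in \cite{briand2020particles} to obtain a measurable solution map $\mathcal{G}^{\varepsilon}\colon \mathcal{C}([0,T],\mathbb{R})\to\mathcal{X}$ with $X^{\varepsilon}=\mathcal{G}^{\varepsilon}(\sqrt{\varepsilon}B)$, and set $\mathcal{G}^{0}\kuo{\int_{0}^{\cdot}h_{s}\,ds}:=Y^{h}$, the solution of the skeleton equation \eqref{eq:ske_eq}. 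By the variational representation of Brownian motion \cite{Bou1998AVR}, the LDP with good rate function $I$ reduces (this is what Proposition \ref{pro:LDP1} and Proposition \ref{pro:LDP2} provide) to verifying, for each $N<\infty$: \textbf{(WC1)} if $h^{n}\to h$ weakly in $S_{N}:=\dkuo{g\in L^{2}([0,T];\mathbb{R})\colon \int_{0}^{T}\abs{g(s)}^{2}ds\le N}$ then $Y^{h^{n}}\to Y^{h}$ in $\mathcal{X}$; and \textbf{(WC2)} if $\kuo{h^{\varepsilon}}$ are predictable with $\int_{0}^{T}\abs{h^{\varepsilon}_{s}}^{2}ds\le N$ a.s. and $h^{\varepsilon}\Rightarrow h$ as $S_{N}$-valued random variables for the weak topology, then $\mathcal{G}^{\varepsilon}\kuo{\sqrt{\varepsilon}B+\int_{0}^{\cdot}h^{\varepsilon}_{s}\,ds}\Rightarrow Y^{h}$ in $\mathcal{X}$.

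For \textbf{(WC1)} I would use that $Y^{h^{n}}$ and $Y^{h}$ are deterministic paths driven by the common frozen flow $\delta_{\psi_{\cdot}}$ and estimate $D^{n}(t):=\sup_{r\le t}\abs{Y^{h^{n}}_{r}-Y^{h}_{r}}$ via the Lipschitz bounds of Assumption \ref{ass:A_1}, the Lipschitz property of $G_{0}$ (Lemma \ref{lem:2.2}) for the difference of reflections, and the splitting
$$
\int_{0}^{t}\sigma\kuo{Y^{h^{n}}_{s},\delta_{\psi_{s}}}h^{n}(s)\,ds-\int_{0}^{t}\sigma\kuo{Y^{h}_{s},\delta_{\psi_{s}}}h(s)\,ds=\int_{0}^{t}\zkuo{\sigma\kuo{Y^{h^{n}}_{s},\delta_{\psi_{s}}}-\sigma\kuo{Y^{h}_{s},\delta_{\psi_{s}}}}h^{n}(s)\,ds+\rho^{n}(t),
$$
with $\rho^{n}(t):=\int_{0}^{t}\sigma\kuo{Y^{h}_{s},\delta_{\psi_{s}}}\kuo{h^{n}(s)-h(s)}\,ds$, which yields $D^{n}(t)\le C\int_{0}^{t}D^{n}(s)\kuo{1+\abs{h^{n}(s)}}\,ds+C\sup_{r\le T}\abs{\rho^{n}(r)}$. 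The family $\kuo{\rho^{n}}$ is equicontinuous since $\abs{\rho^{n}(t)-\rho^{n}(s)}\le 2N^{1/2}\norm{\sigma(Y^{h}_{\cdot},\delta_{\psi_{\cdot}})}_{L^{2}([s,t])}$, and it converges pointwise to $0$ by weak convergence, so $\sup_{r\le T}\abs{\rho^{n}(r)}\to0$; since $\int_{0}^{T}\kuo{1+\abs{h^{n}(s)}}ds$ is bounded uniformly in $n$, Gronwall's lemma gives $D^{n}(T)\to0$. The same estimate with a single control shows $\dkuo{Y^{h}\colon h\in S_{N}}$ is bounded and equicontinuous in $\mathcal{X}$, so, combined with (WC1), the level sets of $I$ are compact.

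For \textbf{(WC2)} I would invoke the Skorokhod representation theorem to assume $h^{\varepsilon}\to h$ a.s.\ in the weak topology of $S_{N}$, and use Girsanov's theorem to see that $\bar{X}^{\varepsilon}:=\mathcal{G}^{\varepsilon}\kuo{\sqrt{\varepsilon}B+\int_{0}^{\cdot}h^{\varepsilon}_{s}\,ds}$ solves the controlled equation obtained from \eqref{eq:4.1} by freezing the law to $\mu^{\varepsilon}$, adding the drift $\sigma\kuo{\bar{X}^{\varepsilon}_{s},\mu^{\varepsilon}_{s}}h^{\varepsilon}_{s}$, with the associated reflection $\bar{K}^{\varepsilon}$. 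Then: (a) uniform moment bounds $\sup_{\varepsilon}\e\zkuo{\sup_{t\le T}\abs{\bar{X}^{\varepsilon}_{t}}^{2}}<\infty$ from $\int_{0}^{T}\abs{h^{\varepsilon}_{s}}^{2}ds\le N$, Burkholder--Davis--Gundy, the Lipschitz property of $G_{0}$ and Gronwall; (b) tightness of $\dkuo{\bar{X}^{\varepsilon}}$ in $\mathcal{X}$ via the increment bound $\e\zkuo{\abs{\bar{X}^{\varepsilon}_{(\tau+\delta)\wedge T}-\bar{X}^{\varepsilon}_{\tau}}^{2}}\le C\kuo{\varepsilon+\delta+\delta^{1/2}}$ uniform over stopping times $\tau$ (Cauchy--Schwarz for the control term, Proposition \ref{pro:3.2}(ii) for the reflection); (c) the convergence $\e\zkuo{\sup_{t\le T}\abs{X^{\varepsilon}_{t}-\psi_{t}}^{2}}\to0$ from a stability-in-the-driving-noise argument as in the proof of Theorem \ref{thm:srdp}, whence $\sup_{t\le T}\mathcal{W}_{2}(\mu^{\varepsilon}_{t},\delta_{\psi_{t}})\to0$ and $\sup_{t\le T}\abs{K^{\varepsilon}_{t}-K^{\psi}_{t}}\to0$. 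Along any subsequence on which $\kuo{\bar{X}^{\varepsilon},h^{\varepsilon}}$ converges in law I would pass, after Skorokhod representation, to a.s.\ limits $\bar{X}^{\varepsilon}\to\bar{X}$ and $h^{\varepsilon}\to h$ weakly: the stochastic integral $\sqrt{\varepsilon}\int_{0}^{\cdot}\sigma\kuo{\bar{X}^{\varepsilon}_{s},\mu^{\varepsilon}_{s}}dB_{s}\to0$ (Doob plus the moment bound), the drift converges by continuity of $b$, the reflection $\bar{K}^{\varepsilon}\to K^{h}$ by the Lipschitz continuity of $G_{0}$ and the convergence of the underlying $U$-processes, and the control term is split as in (WC1): its $\sigma\kuo{\bar{X}^{\varepsilon}_{s},\mu^{\varepsilon}_{s}}-\sigma\kuo{\bar{X}_{s},\delta_{\psi_{s}}}$ part is $O\kuo{N^{1/2}\norm{\sigma(\bar{X}^{\varepsilon}_{\cdot},\mu^{\varepsilon}_{\cdot})-\sigma(\bar{X}_{\cdot},\delta_{\psi_{\cdot}})}_{L^{2}}}\to0$ and its remaining part $\int_{0}^{t}\sigma\kuo{\bar{X}_{s},\delta_{\psi_{s}}}\kuo{h^{\varepsilon}(s)-h(s)}ds\to0$ uniformly in $t$ by the equicontinuity argument of (WC1) applied pathwise. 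The limit $\bar{X}$ then solves \eqref{eq:ske_eq}, so uniqueness (Proposition \ref{pro:5.1}) forces $\bar{X}=Y^{h}$, and since the subsequence was arbitrary, $\bar{X}^{\varepsilon}\Rightarrow Y^{h}$.

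The hard part will be (WC2), and within it three points: identifying the correct controlled equation, in particular that the measure argument is the \emph{uncontrolled} flow $\mu^{\varepsilon}=\law{X^{\varepsilon}}$, which converges to the deterministic curve $\delta_{\psi_{\cdot}}$ (the McKean--Vlasov subtlety stressed in \cite{liu2022large}); passing to the limit in the control term $\int_{0}^{\cdot}\sigma\kuo{\bar{X}^{\varepsilon}_{s},\mu^{\varepsilon}_{s}}h^{\varepsilon}_{s}\,ds$, where the integrand itself varies so that only weak $L^{2}$ convergence of $h^{\varepsilon}$ is available; and controlling the nonlinear mean-reflection term $\bar{K}^{\varepsilon}$ and showing it converges to the reflection $K^{h}$ of \eqref{eq:ske_eq}, for which Lemma \ref{lem:2.2} and the time-continuity results of Section \ref{sec:3} are the main tools.
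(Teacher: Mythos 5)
Your proposal is correct in outline and shares the paper's overall framework (variational representation, the skeleton equation \eqref{eq:ske_eq}, the frozen \emph{uncontrolled} law $\mathcal{L}_{X^\varepsilon}$ in the controlled equation, and the $I_3^n$-type splitting with Arzel\`a--Ascoli for weakly convergent controls, which is exactly the paper's Proposition \ref{pro:LDP1}), but it verifies the second sufficient condition by a genuinely different route. The paper does not use the classical Budhiraja--Dupuis condition you state as (WC2) (convergence in distribution of the controlled process to $Y^h$ via tightness, Skorokhod representation and identification of limit points); instead, following \cite{liu2022large}, it verifies the simpler criterion (LDP2): for random controls $h^\varepsilon\in\tilde{\mathcal{H}}^M$, the controlled process $Z^\varepsilon$ is compared directly with the skeleton $Y^{h^\varepsilon}$ driven by the \emph{same} control, and $\sup_{t\le T}|Z^\varepsilon_t-Y^{h^\varepsilon}_t|\to 0$ in probability is obtained by a Lipschitz/Gronwall estimate, with the McKean--Vlasov term handled by Lemma \ref{lem:4.4} ($\sup_t\mathcal{W}_2(\mathcal{L}_{X^\varepsilon_t},\delta_{\psi_t})\to 0$, your point (c)). So your attribution ``this is what Propositions \ref{pro:LDP1} and \ref{pro:LDP2} provide'' is slightly off: Proposition \ref{pro:LDP2} is the direct-comparison condition, not your (WC2). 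What each approach buys: the paper's route avoids tightness arguments and the delicate passage to the limit in the mean-reflection term entirely, reusing the Section \ref{sec:3} machinery (Lemma \ref{lem:2.2} and Gronwall); your route is the classical criterion and would also work, but it carries extra burdens you should not underestimate --- uniform moment and increment bounds for the controlled processes, tightness of the deterministic reflectors, and the identification of the limit reflection, which requires upgrading convergence in law of $\bar U^\varepsilon$ to $\mathcal{W}_1$-convergence of its marginals (via uniform integrability) before Lemma \ref{lem:2.2} can be applied. None of these is a fatal gap, but they make your version strictly longer than the paper's.
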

\begin{proof}
	According to Proposition \ref{pro:5.1} , there exists a measurable map $\mathcal{G}^0$: 
	$$
	\mathcal{G}^0 : C\left([0, T] ; \mathbb{R}\right) \rightarrow \mathcal{X} \,,
	$$
	such that for any $h \in L^2([0,T],\mathbb{R})$
	$$
	Y^h = \mathcal{G}^0\kuo{\int_0^\cdot h(s)ds}.
	$$
	Let $$
\mathcal{H}^M:=\left\{h:[0, T] \rightarrow \mathbb{R}^m, \int_0^T\|h(s)\|^2 \mathrm{~d} s \leq M\right\}\,,
$$
and
$$\tilde{\mathcal{H}}^M:=\left\{h: h\right. \text{is }\mathbb{R}\text{-valued }\mathcal{F}_t\text{-predictable process such that } h(\omega) \in \mathcal{H}^M, \mathbb{P}-a.s \}\,.$$

$\mathcal{H}^M$ is endowed with the weak topology on $L^2\left([0, T], \mathbb{R}\right)$. Then $\mathcal{H}^M$ is a Polish space. For every $\varepsilon > 0$, there exists a measurable mapping $\mathcal{G}^\varepsilon(\cdot) :C\left([0, T] ; \mathbb{R}\right) \rightarrow \mathcal{X} $ such that 
$$
X^\varepsilon = \mathcal{G}^\varepsilon(B(\cdot))\,,
$$
and for any $M > 0$ and $h^\varepsilon \in \tilde{\mathcal{H}}^M$ , 
$$
Z^{\varepsilon}:=\mathcal{G}^{\varepsilon}\left(B(\cdot)+\frac{1}{\sqrt{\varepsilon}} \int_0^{\cdot} h^{\varepsilon}(s) \mathrm{d} s\right)\,.
$$
From  Girsanov's Theorem, we know $Z^\varepsilon$  is the solution of the following MR-MVSDE:
\begin{equation}\label{eq:Z}
\begin{aligned}
	  \left\{\begin{array}{l}
	  Z^\varepsilon_t = \xi + \int_0^t b\kuo{Z^\varepsilon_s,\mathcal{L}_{X^\varepsilon_s}}ds + \int_0^t \sigma\kuo{Y^h_s,\mathcal{L}_{X^\varepsilon_s}}h^\varepsilon(s)ds + K^{Z^\varepsilon}_t\\
	  \e\zkuo{h(Z^\varepsilon_t)}\geq 0, \quad \int_0^t h(Z^\varepsilon_s)dK^{Z^\varepsilon}_s = 0
	  \end{array}\right. \,.
\end{aligned}
\end{equation}
Following \cite{liu2022large}, we only need to verify the two claims:
\begin{itemize}
	\item [\textbf{(LDP1)}]  For every $M<+\infty$ and any family $\left\{h^n ; n \in \mathbb{N}\right\} \subset \mathcal{H}^M$ converging to some element $h \in \mathcal{H}^M$ as $n \rightarrow \infty$
	$$
	\lim\limits_{n\rightarrow \infty} \sup_{t\in [0,T]}\abs{\mathcal{G}^0\kuo{\int_0^{\cdot}h^n(s)ds} - \mathcal{G}^0\kuo{\int_0^{\cdot}h(s)ds}} = 0\,.
	$$
	\item [\textbf{(LDP2)}] For every $M<+\infty$ and any family $\left\{h^{\varepsilon} ; \varepsilon>0\right\} \subset \tilde{\mathcal{H}}^M$ and any $\theta>0$,
$$
	\lim_{\varepsilon\rightarrow 0}\p\zkuo{\sup_{t\in [0,T]}\abs{Z^\varepsilon_t - Y^{h^\varepsilon}_t} \geq \theta} = 0\,,
	$$
where $Z^{\varepsilon}$ is the unique solution to \eqref{eq:Z} and $Y^{h^{\varepsilon}}=\mathcal{G}^0\left(\int_0^\cdot h^{\varepsilon}(s) \mathrm{d} s\right)$.
\end{itemize}

The verification of (LDP1) will be given in Proposition \ref{pro:LDP1}. (LDP2) will be established in Proposition \ref{pro:LDP2}.
\end{proof}
\subsubsection{Proof of (LDP1)}
\begin{proposition}\label{pro:LDP1}
Let Assumptions \ref{ass:A_1} and \ref{ass:A_2} hold. For any $M< + \infty$, family $\{h^n\}_{n\geq 1}  \in \mathcal{H}^M$ and $h \in \mathcal{H}^M$, suppose that $h^n \rightarrow h$ as $n \rightarrow \infty$ in the weak topology of $L^2([0,T],\mathbb{R})$. Then
	$$
	\lim\limits_{n\rightarrow \infty} \sup_{t\in [0,T]}\abs{\mathcal{G}^0\kuo{\int_0^{\cdot}h^n(s)ds} - \mathcal{G}^0\kuo{\int_0^{\cdot}h(s)ds}} = 0\,.
	$$
\end{proposition}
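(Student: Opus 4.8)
The plan is to prove Proposition~\ref{pro:LDP1} by a compactness (Arzel\`a--Ascoli) argument rather than a direct estimate, because the controls converge only \emph{weakly} in $L^2$ and so no quantitative Gronwall bound on $\sup_t\abs{Y^{h^n}_t-Y^h_t}$ is available. Write $Y^n:=\mathcal{G}^0\kuo{\int_0^\cdot h^n(s)\,ds}=Y^{h^n}$ and $Y:=\mathcal{G}^0\kuo{\int_0^\cdot h(s)\,ds}=Y^h$. As in the construction behind Theorem~\ref{thm:1}, now in the purely deterministic setting with the fixed measure path $\delta_{\psi_s}$, the skeleton solution admits the representation
$$
Y^n_t=U^n_t+\sup_{s\leq t}G_0\kuo{\delta_{U^n_s}},\qquad U^n_t=\xi+\int_0^t b\kuo{Y^n_s,\delta_{\psi_s}}\,ds+\int_0^t\sigma\kuo{Y^n_s,\delta_{\psi_s}}h^n(s)\,ds,
$$
and likewise for $Y$. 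First I would establish the uniform bound $\sup_n\sup_{t\leq T}\abs{Y^n_t}\leq C_{M,T}<\infty$ from the linear growth of $b,\sigma$ (Assumption~\ref{ass:A_1}), Cauchy--Schwarz applied to $\int_0^t\sigma(Y^n_s,\delta_{\psi_s})h^n(s)\,ds$ (bounded by $C\kuo{1+\sup_{r\leq t}\abs{Y^n_r}}\sqrt{M}\,t^{1/2}$ since $\int_0^T\abs{h^n(s)}^2\,ds\leq M$), the Lipschitz bound for $G_0$ from Lemma~\ref{lem:2.2}, and Gronwall.

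Feeding this bound back into the representation and using Cauchy--Schwarz again gives $\abs{U^n_t-U^n_s}\leq C_{M,T}\kuo{\abs{t-s}+\abs{t-s}^{1/2}}$ for $0\leq s\leq t\leq T$, and since $G_0(\delta_\cdot)$ is $\tfrac{M}{m}$-Lipschitz the reflection term inherits the same modulus of continuity; hence $\{Y^n\}_n$ is uniformly bounded and uniformly $\tfrac12$-H\"older, so by Arzel\`a--Ascoli it is relatively compact in $\mathcal{C}([0,T];\mathbb{R})$. It then suffices to identify every uniform limit point. Let $Y^{n_k}\to\widehat Y$ uniformly along a subsequence. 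The drift term $\int_0^\cdot b(Y^{n_k}_s,\delta_{\psi_s})\,ds$ converges uniformly to $\int_0^\cdot b(\widehat Y_s,\delta_{\psi_s})\,ds$ by the Lipschitz property of $b$; for the control term I would split
$$
\int_0^t\sigma\kuo{Y^{n_k}_s,\delta_{\psi_s}}h^{n_k}(s)\,ds=\int_0^t\kuo{\sigma\kuo{Y^{n_k}_s,\delta_{\psi_s}}-\sigma\kuo{\widehat Y_s,\delta_{\psi_s}}}h^{n_k}(s)\,ds+\int_0^t\sigma\kuo{\widehat Y_s,\delta_{\psi_s}}h^{n_k}(s)\,ds,
$$
where the first summand is $\leq\kuo{\int_0^T\abs{\sigma(Y^{n_k}_s,\delta_{\psi_s})-\sigma(\widehat Y_s,\delta_{\psi_s})}^2\,ds}^{1/2}\sqrt{M}\to0$ by uniform convergence and Lipschitz continuity, while the second converges to $\int_0^t\sigma(\widehat Y_s,\delta_{\psi_s})h(s)\,ds$ because $\mathbbm{1}_{[0,t]}(\cdot)\,\sigma(\widehat Y_\cdot,\delta_{\psi_\cdot})\in L^2([0,T];\mathbb{R})$ and $h^{n_k}\rightharpoonup h$ weakly. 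Consequently $U^{n_k}_t\to\widehat U_t:=\xi+\int_0^t b(\widehat Y_s,\delta_{\psi_s})\,ds+\int_0^t\sigma(\widehat Y_s,\delta_{\psi_s})h(s)\,ds$ pointwise, and since $\{U^{n_k}\}$ is equicontinuous this convergence is uniform; by Lemma~\ref{lem:2.2} together with the uniform bound, $\sup_{s\leq t}G_0(\delta_{U^{n_k}_s})\to\sup_{s\leq t}G_0(\delta_{\widehat U_s})$. Passing to the limit in $Y^{n_k}_t=U^{n_k}_t+\sup_{s\leq t}G_0(\delta_{U^{n_k}_s})$ shows that $\widehat Y$ solves \eqref{eq:ske_eq} with control $h$, whence $\widehat Y=Y$ by the uniqueness in Proposition~\ref{pro:5.1}.

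Finally, since every subsequence of $\{Y^n\}$ admits a further subsequence converging uniformly to $Y$, the full sequence converges, i.e.\ $\sup_{t\in[0,T]}\abs{Y^n_t-Y_t}\to0$, which is the claim. The main obstacle is the identification step: weak $L^2$ convergence of $\{h^n\}$ precludes any contraction/Gronwall argument, so one must extract the convergence of $\int_0^\cdot\sigma(Y^n_s,\delta_{\psi_s})h^n(s)\,ds$ from the combination of strong (uniform) convergence of $Y^{n_k}$ and weak convergence of $h^{n_k}$ in a ``strong times weak'' product, which is only legitimate once the a priori uniform bound and equicontinuity have been secured; handling the reflection term $\sup_{s\leq t}G_0(\delta_{U^{n_k}_s})$ under these limits is then routine thanks to Lemma~\ref{lem:2.2}.
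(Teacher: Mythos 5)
Your argument is correct, but it takes a genuinely different route from the paper's, and your opening claim that ``no quantitative Gronwall bound is available'' under weak convergence is not accurate: the paper does exactly that. It decomposes $Y^{h^n}-Y^h$ into four terms, of which the only one carrying $h^n-h$ is $I^n_3(t)=\int_0^t\sigma\kuo{Y^h_s,\delta_{\psi_s}}\zkuo{h^n(s)-h(s)}ds$, anchored at the \emph{fixed} limit trajectory $Y^h$; the remaining terms are bounded by $\kappa^n(s)=\sup_{r\le s}\abs{Y^{h^n}_r-Y^h_r}$ times the weights $1$ and $\abs{h^n(s)}$ (uniformly integrable since $\int_0^T\abs{h^n(s)}ds\le\sqrt{TM}$), and the reflector difference is absorbed through Lemma \ref{lem:2.2}. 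Gronwall then yields $\kappa^n(T)\le C\sup_{t\in[0,T]}\abs{I^n_3(t)}$, and the Arzel\`a--Ascoli/weak-convergence argument is needed only to upgrade the pointwise convergence $I^n_3(t)\to0$ to uniform convergence of this single auxiliary sequence. Your proof instead applies compactness to the whole solution sequence: a priori bounds and $\tfrac12$-H\"older equicontinuity of $Y^n$, extraction of uniform limit points, identification of the limit via the ``strong times weak'' convergence of the control integral together with the $G_0$-representation of the reflection term, and finally the uniqueness of Proposition \ref{pro:5.1} plus the subsequence principle. Both routes are valid: the paper's is shorter and quantitative (the error is explicitly dominated by $\sup_t\abs{I^n_3(t)}$), whereas yours is more robust, since it never needs to anchor the control term at $Y^h$ and would adapt to settings where the coefficient multiplying the control also changes along the sequence, at the price of invoking well-posedness of the skeleton equation and the extra a priori estimates. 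One small point to polish: in your uniform bound the Cauchy--Schwarz estimate $C\kuo{1+\sup_{r\le t}\abs{Y^n_r}}\sqrt{M}\,t^{1/2}$ contains the supremum itself on the right-hand side, so you should either square the inequality before applying Gronwall or argue on sufficiently small subintervals and iterate.
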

\begin{proof}
	For simplicity, we denote $Y^{h_n} : = \mathcal{G}^0\kuo{\int_0^{\cdot}h^n(s)ds}$, $Y^h = \mathcal{G}^0\kuo{\int_0^{\cdot}h(s)ds}$.
	\begin{equation}\label{eq:4_pro_1_1}
\begin{aligned}
	  Y^{h_n} - Y^h &= \int_0^t b\kuo{Y^{h^n}_s,\delta_{\psi_s}} - b\kuo{Y^h_s,\delta_{\psi_s}}ds + \int_0^t\sigma\kuo{Y^{h^n}_s,\delta_{\psi_s}} h^n(s) - \sigma\kuo{Y^h_s,\delta_{\psi_s}}h(s)ds \\
	  &\qquad+ K^{h^n}_t - K^h_t\\
	  & = \int_0^t b\kuo{Y^{h^n}_s,\delta_{\psi_s}} - b\kuo{Y^h_s,\delta_{\psi_s}}ds + \int_0^t\zkuo{\sigma\kuo{Y^{h^n}_s,\delta_{\psi_s}} - \sigma\kuo{Y^h_s,\delta_{\psi_s}}}h^n(s)ds\\
	  &\qquad + \int_0^t\sigma\kuo{Y^h_s,\delta_{\psi_s}}\zkuo{h^n(s)-h(s)}ds + K^{h^n}_t - K^h_t\\
	  &:= I^n_1(t) + I^n_2(t) + I^n_3(t) + I^n_4(t).
\end{aligned}
\end{equation}
Let $\kappa^n(t):= \sup_{r\in [0,t]}\abs{Y^{h^n}(r) - Y^h(r)}$. By the Lipschitz condition of $b$, we have 
\begin{equation}\label{eq:4_pro_1_2}
\begin{aligned}
	  \abs{I^n_1(t)} \leq \int_0^t \abs{b\kuo{Y^{h^n}_s ,\delta_{\psi_s}}-b\kuo{Y^h_s,\delta_{\psi_s}}}ds \leq K\int_0^t \kappa^n(s)ds.
\end{aligned}
\end{equation}
By the Lipschitz condition of $\sigma$, we have 
\begin{equation}\label{eq:4_pro_1_3}
\begin{aligned}
	  \abs{I^n_2(t)}\leq K\int_0^t|h^n(s)|\kappa_n(s)ds\,.
\end{aligned}
\end{equation}
By the linear growth of $\sigma$, we have 
\begin{equation*}
\begin{aligned}
	  \sup_{t\in [0,T]}\abs{\sigma(x,\mu)}^2\leq C(1+K^2)
\end{aligned}
\end{equation*}
Since $h^n(t) \rightarrow h(t)$ weakly in $L^2\kuo{[0,T],\mathbb{R}}$ as $n\rightarrow \infty$, we have 
\begin{equation*}
\begin{aligned}
	  \int_0^T\abs{h^n(s)}^2ds \leq C(h) \quad \int_0^T\abs{h(s)}^2ds \leq C(h).
\end{aligned}
\end{equation*}
Then, by the Cauchy-Schwarz inequality, we have 
\begin{equation}\label{eq:4_pro_1_4}
\begin{aligned}
	  \sup_{t\in [0,T]} \abs{I^n_3(t)}&\leq \int_0^T \abs{\sigma\kuo{Y^h_s,\delta_{\psi_s}}\zkuo{h^n(s)-h(s)}}ds\\
	  &\leq \kuo{\int_0^T\abs{\sigma\kuo{Y^h_s,\psi_s}}^2ds}^{\frac{1}{2}}\kuo{\int_0^T\abs{h^n(s)-h(s)}^2}^{\frac{1}{2}}\\
	  &\leq \kuo{ T \sup_{t\in [0,T]}\abs{\sigma\kuo{Y^h_s,\psi_s} }^2}^{\frac{1}{2}}\kuo{2 \int_0^T\kuo{|h^n(s)|^2+|h(s)|^2}ds}^{\frac{1}{2}}\\
	  &\leq (CT(1+K^2)C(h))^{\frac{1}{2}}<\infty .
\end{aligned}
\end{equation}
Similarly, we have, for any $0\leq t_1\leq t_2\leq T$,
$$
\abs{I^n_3(t_2) - I^n_3(t_1)}\leq \zkuo{C(t_2-t_1)(1+K^2)C(h)}^{\frac{1}{2}},
$$
which means that the sequence $\dkuo{I^n_3:n\geq 1}$ is equi-continuous. By the Arzela-Ascoli theorem, we deduce that $\dkuo{I^n_3:n\geq 1}$ is relatively compact in $\mathcal{C}\kuo{[0,T],\mathbb{R}}$.

We can deduce that for any $t \in [0,T]$,
\begin{equation}\label{eq:4_pro_1_5}
\begin{aligned}
	  I^n_3(t) = \int_0^t\kuo{h^n(s)-h(s)}\sigma\kuo{Y^h_s,\delta_{\psi_s}}ds\rightarrow 0 \quad \text{as } n \rightarrow \infty\,.
\end{aligned}
\end{equation}
Together with the relative compactness of $\dkuo{I^n_3:n\geq 1}$ in $\mathcal{C}\kuo{[0,T],\mathbb{R}}$, we can get 
\begin{equation*}
\begin{aligned}
	  \lim\limits_{n\rightarrow \infty}\sup_{t\in [0,T]}|I^n_3(t)| = 0\,.
\end{aligned}
\end{equation*}
%%%%
By Lemma \ref{lem:2.2} , we have 
\begin{equation}\label{eq:4_pro_1_6}
\begin{aligned}
	  K^{h^n}_t - K^h_t\leq \frac{M}{m}\sup_{r\in [0,t]}\abs{U^{h^n}_r - U^h_r} = \frac{M}{m}\sup_{r \in [0,t]}\zkuo{|I^n_1|(r) + |I^n_2|(r)}\,.
	  \end{aligned}
\end{equation}
Combing \eqref{eq:4_pro_1_1}, \eqref{eq:4_pro_1_2}, \eqref{eq:4_pro_1_3}, \eqref{eq:4_pro_1_4} and \eqref{eq:4_pro_1_6} together, we have 
\begin{equation*}
\begin{aligned}
	  \kappa^n(t)&\leq C\dkuo{\int_0^t\kappa^n(s)ds + \int_0^t\kappa^n(s)\abs{h^n(s)}ds}+ 2\sup_{t\in [0,T]}\abs{I^n_3(t)}\,.
\end{aligned}
\end{equation*}
Then, by \eqref{eq:4_pro_1_5} and Gronwall's lemma, we have 
\begin{equation}
\begin{aligned}
	  \kappa^n(T)&\leq 2\sup_{t\in [0,T]}|I^n_3(t)|\exp\dkuo{C\int_0^T|h^n(s)|ds}\\
	  & \leq C\sup_{t\in [0,T]}|I^n_3(t)| \rightarrow 0 \quad \text{as }n\rightarrow\infty.
\end{aligned}
\end{equation}
The proof is complete.
\end{proof}
\subsubsection{Proof of (LDP2)}
\begin{lemma}\label{lem:4.4}
Suppose Assumptions \ref{ass:A_1} and \ref{ass:A_2} hold, let $X^\varepsilon_t$ and $\psi_t$ satisfy \eqref{eq:4.1} and \eqref{eq:4.2}, respectively. Then
	$$
	\lim_{\varepsilon \rightarrow 0}\sup_{t \in [0,T]}\mathcal{W}_2\kuo{\mathcal{L}_{X^\varepsilon_t},\delta_{\psi_t}}^2 = 0\,.
	$$
\end{lemma}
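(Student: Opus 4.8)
The plan is to reduce the statement to an $L^2$ estimate on the pathwise difference $X^\varepsilon - \psi$ and then close a Gronwall inequality. The key preliminary observation is that the $2$-Wasserstein distance from a measure to a Dirac mass is simply an $L^2$-distance: for every $t\in[0,T]$,
$$
\mathcal{W}_2\kuo{\law{X^\varepsilon_t},\delta_{\psi_t}}^2 = \e\zkuo{\abs{X^\varepsilon_t - \psi_t}^2},
$$
because the only coupling of $\law{X^\varepsilon_t}$ with $\delta_{\psi_t}$ is the product one. Hence it is enough to show that $\varphi_\varepsilon(T) := \e\zkuo{\sup_{t\le T}\abs{X^\varepsilon_t - \psi_t}^2} \to 0$ as $\varepsilon \to 0$, since then $\sup_{t\le T}\mathcal{W}_2(\law{X^\varepsilon_t},\delta_{\psi_t})^2 \le \varphi_\varepsilon(T)$ finishes the argument.

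First I would subtract \eqref{eq:4.2} from \eqref{eq:4.1} to get
$$
X^\varepsilon_t - \psi_t = \int_0^t \kuo{b(X^\varepsilon_s,\law{X^\varepsilon_s}) - b(\psi_s,\delta_{\psi_s})}\,ds + \sqrt{\varepsilon}\int_0^t \sigma(X^\varepsilon_s,\law{X^\varepsilon_s})\,dB_s + K^\varepsilon_t - K^\psi_t,
$$
and estimate the three pieces separately. For the drift piece I use the Lipschitz Assumption \ref{ass:A_1} together with $\mathcal{W}_2(\law{X^\varepsilon_s},\delta_{\psi_s}) = (\e\abs{X^\varepsilon_s-\psi_s}^2)^{1/2}$, so that after Cauchy--Schwarz, taking the supremum in time and expectations, it contributes at most $C\int_0^t \varphi_\varepsilon(s)\,ds$. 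For the stochastic piece I apply Burkholder--Davis--Gundy, the at-most-linear growth of $\sigma$ (a consequence of Assumption \ref{ass:A_1} by fixing a reference point), and the uniform-in-$\varepsilon$ moment bound $\e[\sup_{t\le T}\abs{X^\varepsilon_t}^2] \le C(1+\e\abs{\xi}^2)$, which holds because the proof of Proposition \ref{pro:3.2} applies verbatim with $\sqrt{\varepsilon}\le 1$; this yields a contribution of order $C\varepsilon(1+\e\abs{\xi}^2)$. For the reflection piece I use the explicit representations $K^\varepsilon_t = \sup_{s\le t}G_0(\law{U^\varepsilon_s})$ and $K^\psi_t = \sup_{s\le t}G_0(\delta_{U^\psi_s})$ from Theorem \ref{thm:1}, together with the Lipschitz estimate for $G_0$ in Lemma \ref{lem:2.2}, to obtain
$$
\sup_{s\le t}\abs{K^\varepsilon_s - K^\psi_s}^2 \le \frac{M^2}{m^2}\,\e\zkuo{\sup_{s\le t}\abs{U^\varepsilon_s - U^\psi_s}^2};
$$
since $U^\varepsilon_s - U^\psi_s$ is precisely $X^\varepsilon_s - \psi_s$ with the $K$-term removed, the right-hand side is bounded, by the same two arguments as above, by $C\int_0^t\varphi_\varepsilon(s)\,ds + C\varepsilon(1+\e\abs{\xi}^2)$.

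Putting the pieces together would give $\varphi_\varepsilon(t) \le C_1\varepsilon(1+\e\abs{\xi}^2) + C_2\int_0^t\varphi_\varepsilon(s)\,ds$ with $C_1,C_2$ independent of $\varepsilon$, so Gronwall's lemma yields $\varphi_\varepsilon(T) \le C_1\varepsilon(1+\e\abs{\xi}^2)e^{C_2 T} \to 0$ as $\varepsilon\to 0$, which is the claim. I expect the only genuinely delicate point to be the apparent circularity between the reflection term and $\varphi_\varepsilon$ in the Gronwall scheme: one must observe that, because $K^\varepsilon - K^\psi$ is controlled by $\sup\abs{U^\varepsilon-U^\psi}^2$ and $U^\varepsilon-U^\psi$ contains no reflection term, the reflection contribution is already of the admissible form $C\int_0^t\varphi_\varepsilon + C\varepsilon(\cdots)$ and does not produce a term proportional to $\varphi_\varepsilon(t)$ on the right-hand side. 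The remaining computations (BDG constants, moment bounds, Cauchy--Schwarz) are routine.
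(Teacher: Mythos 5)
Your argument is correct and follows essentially the same route as the paper: the same decomposition into drift, noise and reflection contributions, the Lipschitz bound on $G_0$ from Lemma \ref{lem:2.2} to control $K^\varepsilon-K^\psi$ via $U^\varepsilon-U^\psi$ (which carries no reflection term), a uniform-in-$\varepsilon$ moment bound for the $\sqrt{\varepsilon}$ stochastic integral, and a Gronwall closure. The only difference is a streamlining: by invoking the exact identity $\mathcal{W}_2\kuo{\law{X^\varepsilon_t},\delta_{\psi_t}}^2=\e\zkuo{\abs{X^\varepsilon_t-\psi_t}^2}$ from the outset you close the estimate with a single application of Gronwall's lemma, whereas the paper first bounds $\e\zkuo{\sup_{t\leq T}\abs{X^\varepsilon_t-\psi_t}^2}$ in terms of $\int_0^T\mathcal{W}_2\kuo{\mathcal{L}_{X^\varepsilon_s},\delta_{\psi_s}}^2 ds$ and then applies Gronwall a second time to the Wasserstein distance.
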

\begin{proof}
	\begin{equation}\label{eq:4_lemma_1_1}
\begin{aligned}
	  \e\zkuo{\sup_{t\in [0,T]}\abs{X^\varepsilon_t - \psi_t}^2}&\leq 3\left\{\e\zkuo{ \sup_{t\in [0,T]}\int_0^tb\kuo{X^\varepsilon_s,\mathcal{L}_{X^\varepsilon_s}}-b\kuo{\psi_s,\delta_{\psi_s}}ds}^2
	  \right.\\
	  &\qquad \left.+\varepsilon\e\zkuo{\sup_{t\in [0,T]}\int_0^t\sigma\kuo{X^\varepsilon_s,\mathcal{L}_{X^\varepsilon_s}}dB_s}^2 + \sup_{t\in [0,T]}\abs{K^\varepsilon_t - K^\psi_t}^2\right\}\\
	  &=: I_1 + I_2 + I_3.
\end{aligned}
\end{equation}

By the Lipschitz condition of $b$ and $\sigma$, we have 
\begin{equation}\label{eq:4_lemma_1_2}
\begin{aligned}
	  I_1\leq TK\int_0^T\dkuo{\e\zkuo{\abs{X^\varepsilon_s - \psi_s}^2}+ \mathcal{W}_2\kuo{\mathcal{L}_{X^\varepsilon_s},\delta_{\psi_s}}^2}ds\,,
\end{aligned}
\end{equation}
\begin{equation}\label{eq:4_lemma_1_3}
\begin{aligned}
	  {I_2\leq \e\int_0^T \sigma\kuo{X^\varepsilon_s,\mathcal{L}_{X^\varepsilon_s}}^2ds\leq \e\int_0^TC(1+{X^\varepsilon_s})^2ds}\,,
\end{aligned}
\end{equation}
\begin{equation}\label{eq:4_lemma_1_4}
\begin{aligned}
	  I_3 = \sup_{t\in [0,T]}\abs{K^\varepsilon_t - K^\psi_t}^2&\leq\frac{M^2}{m^2}\e\zkuo{\sup_{r\in [0,t]}\abs{U^\varepsilon_r- U^\psi_r}^2}\leq 3 \frac{M^2}{m^2}(I_1+I_2)\,.
\end{aligned}
\end{equation}
Putting \eqref{eq:4_lemma_1_2}, \eqref{eq:4_lemma_1_3} and \eqref{eq:4_lemma_1_4} to \eqref{eq:4_lemma_1_1} and applying the Gronwall's lemma, we have 
\begin{equation}\label{eq:4_lemma_1_5}
\begin{aligned}
	  \e\zkuo{\sup_{t\in [0,T]}\abs{X^\varepsilon_t - \psi_t}^2}\leq e^{C}\dkuo{T \int_0^T\mathcal{W}_2\kuo{\mathcal{L}_{X^\varepsilon_s},\delta_{\psi_s}}^2ds + \varepsilon}\,.
\end{aligned}
\end{equation}

By the definition of $\mathcal{W}_2\kuo{\mathcal{L}_{X^\varepsilon_t},\delta_{\psi_t}}$, we have following estimation
\begin{equation*}
\begin{aligned}
\mathcal{W}_2\kuo{\mathcal{L}_{X^\varepsilon_t},\delta_{\psi_t}}^2\leq \e\zkuo{\sup_{t\in [0,T]}\abs{X^\varepsilon_t - \psi_t}^2}\leq e^{C}\dkuo{T \int_0^T\mathcal{W}_2\kuo{\mathcal{L}_{X^\varepsilon_s},\delta_{\psi_s}}^2ds + \varepsilon}\,.
\end{aligned}
\end{equation*}
Thanking to  Gronwall's lemma, we have 
$$
	\lim_{\varepsilon \rightarrow 0}\sup_{t \in [0,T]}\mathcal{W}_2\kuo{\mathcal{L}_{X^\varepsilon_t},\delta_{\psi_t}}^2 = 0\,.
	$$

\end{proof}
\begin{proposition}\label{pro:LDP2}
Let Assumptions \ref{ass:A_1} and \ref{ass:A_2} hold. Then for any $\theta > 0$,
	$$
	\lim_{\varepsilon\rightarrow 0}\p\zkuo{\sup_{t\in [0,T]}\abs{Z^\varepsilon_t - Y^{h^\varepsilon}_t} \geq \theta} = 0\,.
	$$
\end{proposition}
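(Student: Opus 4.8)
\textbf{Proof proposal for Proposition \ref{pro:LDP2}.}

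The plan is to upgrade the convergence in probability to an $L^2$ estimate: if I show
$$
\lim_{\varepsilon\to0}\mathbb{E}\Big[\sup_{t\le T}\big|Z^\varepsilon_t-Y^{h^\varepsilon}_t\big|^2\Big]=0,
$$
then Chebyshev's inequality $\mathbb{P}\big[\sup_{t\le T}|Z^\varepsilon_t-Y^{h^\varepsilon}_t|\ge\theta\big]\le\theta^{-2}\,\mathbb{E}\big[\sup_{t\le T}|Z^\varepsilon_t-Y^{h^\varepsilon}_t|^2\big]$ gives the claim. As a preliminary step I would record the uniform a priori bound $\sup_{\varepsilon\in(0,1]}\sup_{h^\varepsilon\in\tilde{\mathcal H}^M}\mathbb{E}[\sup_{t\le T}|Z^\varepsilon_t|^2]<\infty$, proved exactly as in Proposition \ref{pro:3.2}(i): the only new contribution is the control term, which by Cauchy--Schwarz and the linear growth of $\sigma$ is $\le(\int_0^T|h^\varepsilon(s)|^2ds)^{1/2}(\int_0^t C(1+|Z^\varepsilon_s|)^2ds)^{1/2}\le\sqrt M\,(\int_0^t C(1+|Z^\varepsilon_s|)^2ds)^{1/2}$, after which Gronwall closes the estimate; the reflection term is treated, as in Proposition \ref{pro:3.2}, via Lemma \ref{lem:2.2}.

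Next I would decompose $Z^\varepsilon_t-Y^{h^\varepsilon}_t$ into four pieces: the drift gap $\int_0^t[b(Z^\varepsilon_s,\mathcal{L}_{X^\varepsilon_s})-b(Y^{h^\varepsilon}_s,\delta_{\psi_s})]\,ds$; the vanishing stochastic term $\sqrt\varepsilon\int_0^t\sigma(Z^\varepsilon_s,\mathcal{L}_{X^\varepsilon_s})\,dB_s$ coming from the Girsanov construction; the control gap $\int_0^t[\sigma(Z^\varepsilon_s,\mathcal{L}_{X^\varepsilon_s})-\sigma(Y^{h^\varepsilon}_s,\delta_{\psi_s})]h^\varepsilon(s)\,ds$; and the reflection gap $K^{Z^\varepsilon}_t-K^{h^\varepsilon}_t$. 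Writing $\Lambda^\varepsilon(t):=\sup_{s\le t}|Z^\varepsilon_s-Y^{h^\varepsilon}_s|$ and inserting $\pm b(Y^{h^\varepsilon}_s,\mathcal{L}_{X^\varepsilon_s})$ (respectively $\pm\sigma(Y^{h^\varepsilon}_s,\mathcal{L}_{X^\varepsilon_s})$), Assumption \ref{ass:A_1} bounds the drift gap by $K\int_0^t\Lambda^\varepsilon(s)\,ds+KT^{1/2}(\int_0^T\mathcal{W}_2(\mathcal{L}_{X^\varepsilon_s},\delta_{\psi_s})^2ds)^{1/2}$ and the control gap by $K\int_0^t\Lambda^\varepsilon(s)|h^\varepsilon(s)|\,ds+K(\int_0^T\mathcal{W}_2(\mathcal{L}_{X^\varepsilon_s},\delta_{\psi_s})^2ds)^{1/2}(\int_0^T|h^\varepsilon(s)|^2ds)^{1/2}$, where the last factor is $\le\sqrt M$ since $h^\varepsilon\in\tilde{\mathcal H}^M$. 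The stochastic term has $\mathbb{E}[\sup_{t\le T}|\cdot|^2]\le C\varepsilon$ by Burkholder--Davis--Gundy, the linear growth of $\sigma$ and the a priori bound. Finally, exactly as in the proofs of Theorems \ref{thm:sric}--\ref{thm:srdp}, Lemma \ref{lem:2.2} gives $\sup_{t\le T}|K^{Z^\varepsilon}_t-K^{h^\varepsilon}_t|^2\le\frac{M^2}{m^2}\sup_{t\le T}|U^{Z^\varepsilon}_t-U^{h^\varepsilon}_t|^2$, and $U^{Z^\varepsilon}-U^{h^\varepsilon}$ is precisely the sum of the first three pieces, hence controlled by the same quantities.

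Collecting the estimates, squaring, taking expectations and using $(\int_0^t\Lambda^\varepsilon(s)|h^\varepsilon(s)|ds)^2\le M\int_0^t\Lambda^\varepsilon(s)^2ds$ (Cauchy--Schwarz together with $\int_0^T|h^\varepsilon|^2\le M$), I would arrive, with $C$ independent of $\varepsilon$ and of $h^\varepsilon\in\tilde{\mathcal H}^M$, at
$$
\mathbb{E}[\Lambda^\varepsilon(t)^2]\le C\Big(\varepsilon+\sup_{s\le T}\mathcal{W}_2(\mathcal{L}_{X^\varepsilon_s},\delta_{\psi_s})^2\Big)+C(1+M)\int_0^t\mathbb{E}[\Lambda^\varepsilon(s)^2]\,ds.
$$
Gronwall's lemma then yields $\mathbb{E}[\Lambda^\varepsilon(T)^2]\le C(1+M)e^{C(1+M)T}\big(\varepsilon+\sup_{s\le T}\mathcal{W}_2(\mathcal{L}_{X^\varepsilon_s},\delta_{\psi_s})^2\big)$, whose right-hand side tends to $0$ as $\varepsilon\to0$ by Lemma \ref{lem:4.4}; Chebyshev's inequality finishes the proof.

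I expect the main obstacle to be the control gap $\int_0^t[\sigma(Z^\varepsilon_s,\mathcal{L}_{X^\varepsilon_s})-\sigma(Y^{h^\varepsilon}_s,\delta_{\psi_s})]h^\varepsilon(s)\,ds$: the weight $h^\varepsilon$ is only square-integrable in time (not bounded) and is a random predictable process, so it cannot be pulled out of the integral, and a careless treatment would produce a Gronwall exponent depending on $\varepsilon$. The resolution is the splitting above combined with the uniform energy bound $\int_0^T|h^\varepsilon(s)|^2ds\le M$, which simultaneously keeps the Gronwall constant uniform in $\varepsilon$ and lets the quantitative measure-mismatch estimate of Lemma \ref{lem:4.4} drive the whole right-hand side to zero; a secondary technical point is making the moment bound on $Z^\varepsilon$ uniform over all admissible controls.
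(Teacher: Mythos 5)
Your proposal is correct and follows essentially the same route as the paper's proof: Chebyshev reduction to an $L^2$ estimate, Lipschitz bounds on the drift and control gaps using the energy bound $\int_0^T|h^\varepsilon(s)|^2ds\leq M$, Lemma \ref{lem:2.2} for the reflection term $K^{Z^\varepsilon}-K^{h^\varepsilon}$, Gronwall's lemma with a constant uniform in $\varepsilon$, and Lemma \ref{lem:4.4} to drive the Wasserstein mismatch (together with the $O(\varepsilon)$ stochastic term) to zero. The only addition is that you make explicit the uniform moment bound on $Z^\varepsilon$ behind the $O(\varepsilon)$ estimate, which the paper uses implicitly.
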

\begin{proof}
By Chebyshev's inequality, we only need to show that 
$$
\lim _{\varepsilon \rightarrow 0} \mathbb{E}\left[\sup _{t \in[0, T]}|Z_t^{\varepsilon}-Y_t^{h^{\varepsilon}}|^2\right]=0\,.
$$
Then, it follows that 
\begin{equation}\label{eq:4_pro_2_1}
\begin{aligned}
	  \e\zkuo{\sup_{t\in [0,T]}\abs{Z^\varepsilon_t - Y^{h^\varepsilon_t}}^2}&\leq T\e\dkuo{\sup_{t\in [0,T]}\int_0^T\abs{b\kuo{Z^\varepsilon_s,\mathcal{L}_{X^\varepsilon_s}}-b\kuo{Y^{h^\varepsilon}_s,\delta_{\psi_s}}}^2ds}\\
	  &\qquad + \e\dkuo{\sup_{t\in [0,T]}\zkuo{\int_0^T 
	  h^\varepsilon(s)
	  \zkuo{\sigma \kuo{Z^\varepsilon_s,\mathcal{L}_{X^\varepsilon_s}} - \sigma\kuo{Y^{h^\varepsilon}_s,\delta_{\psi_s}}}ds}^2}\\
	  &\qquad + \varepsilon C_p\int_0^T\sigma\kuo{Z^\varepsilon_s,\mathcal{L}_{X^\varepsilon_s}}^2ds + \sup_{t\in [0,T]}\abs{K^{Z^\varepsilon}_t - K^{h^\varepsilon}_t}^2\\
	  &\leq KT\sup_{t\in [0,T]}\e\int_0^T\zkuo{\abs{Z^\varepsilon_s - Y^{h^\varepsilon}_s}^2 + \mathcal{W}_2\kuo{\mathcal{L}_{X^\varepsilon_s},\delta_{\psi_s}}^2}ds \\
	  &\qquad+N\sup_{t\in [0,T]}\e\int_0^T\zkuo{\abs{Z^\varepsilon_s - Y^{h^\varepsilon}_s}^2 + \mathcal{W}_2\kuo{\mathcal{L}_{X^\varepsilon_s ,\delta_{\psi_s}}}^2}ds \\
	  &\qquad +\sup_{t\in [0,T]}\abs{K^{Z^\varepsilon}_t - K^{h^\varepsilon}_t}^2\,.
	  \end{aligned}
\end{equation}
And 
\begin{equation}\label{eq:4_pro_2_2}
\begin{aligned}
	  \sup_{t\in [0,T]}\abs{K^{Z^\varepsilon}_t - K^{h^\varepsilon}_t}^2 &\leq \frac{M^2}{m^2}\sup_{r\in [0,t]}\abs{U^{Z^\varepsilon}_r - U^{h^{\varepsilon}}_r}^2 \\
	  &\leq \frac{M^2}{m^2}KT\int_0^T\e\zkuo{\sup_{t\in [0,T]}\abs{Z^\varepsilon_s - Y^{h^\varepsilon}_s}^2 + \mathcal{W}_2\kuo{\mathcal{L}_{X^\varepsilon_s},\delta_{\psi_s}}^2}ds \\
	  &\qquad +\frac{M^2}{m^2}N\int_0^T\e\zkuo{\sup_{t\in [0,T]}\abs{Z^\varepsilon_s - Y^{h^\varepsilon}_s}^2 + \mathcal{W}_2\kuo{\mathcal{L}_{X^\varepsilon_s ,\delta_{\psi_s}}}^2}ds\,.
\end{aligned}
\end{equation}
By \eqref{eq:4_pro_2_1} and \eqref{eq:4_pro_2_2} and apply Gronwall's lemma, we deduce that 
\begin{equation}
\begin{aligned}
	  \e\zkuo{\sup_{t\in [0,T]}\abs{Z^\varepsilon_t - Y^{h^\varepsilon_t}}^2}&\leq \kuo{CT+N}(1+\frac{M^2}{m^2})\int_0^t\e\sup_{t\in[0,s]}\abs{Z^\varepsilon_s - Y^{h^\varepsilon}_s}^2ds \\
	  &\qquad+ \kuo{CT+N}(1+\frac{M^2}{m^2}) T\sup_{t\in [0,T]}\mathcal{W}\kuo{\mathcal{L}_{X^\varepsilon_t},\delta_{\psi_t}} + C(1+\frac{M^2}{m^2})\varepsilon\,.
	  \end{aligned}
\end{equation}

Apply Gronwall's lemma, Lemma \ref{lem:4.4} and letting $\varepsilon \rightarrow \infty$, we get 
$$
\lim _{\varepsilon \rightarrow 0} \mathbb{E}\left[\sup _{t \in[0, T]}|Z_t^{\varepsilon}-Y_t^{h^{\varepsilon}}|^2\right]=0\,.
$$
The proof is complete.
\end{proof}
\section{Invariant probability measure and Harnack inequality}\label{sec:6}

\subsection{Existence of invariant probability measure}% TODO 
For any $X_{s,t}^x$ is the solution of \eqref{eq:1.1} with initial condition $x$ from $s$ to $t$,  let us define operator $P_{s, t}$ and its adjoin $P^\star_{s,t}$ by
$$
P_{s, t} f(x):=\mathbb{E} f\left(X_{s, t}^x\right)=\int_{\mathbb{R}} f(y)\left(P_{s, t}^* \delta_x\right)(\mathrm{d} y)\,.
$$
$P_{s, t}$ is not a semigroup, but the dual  $P_{s, t}^* $ is a non-linear semigroup on $\mathcal{P}_2(\mathbb{R})$.

We first prove the exponential convergence.
\begin{lemma}[Exponential convergence]\label{lem:6.1}
	Suppose Assumptions \ref{ass:A_1}, \ref{ass:A_2} and \ref{ass:A_5} hold, let $X_t$ and $Y_t$ be two solution of \eqref{eq:1.1} such that $\mathcal{L}_{\xi^{X_0}} = \mu_0$, $\mathcal{L}_{\xi^{Y_0}} = \nu_0$ and $\mathcal{W}_2\kuo{\mu_0,\nu_0}^2 = \e\zkuo{\abs{\xi^{X_0}-\xi^{Y_0}}^2}$. We have 
	$$
	\mathcal{W}_2\kuo{P^*_{t}\mu_0,P^*_t\nu_0}^2\leq \mathrm{e}^{-\left(C_2-C_1\right) t} \mathcal{W}_2\left(\mu_0, \nu_0\right)^2\,.
	$$
\end{lemma}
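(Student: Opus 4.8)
\emph{Proof proposal.} The plan is to combine a synchronous coupling with It\^o's formula, the dissipativity Assumption~\ref{ass:A_5}, and Gr\"onwall's lemma. Let $X$ and $Y$ both solve \eqref{eq:1.1} driven by the \emph{same} Brownian motion $B$, with initial values $X_0=\xi^{X_0}$, $Y_0=\xi^{Y_0}$ realizing the optimal coupling $\e\zkuo{\abs{\xi^{X_0}-\xi^{Y_0}}^2}=\mathcal{W}_2(\mu_0,\nu_0)^2$, and let $K^X,K^Y$ denote the associated deterministic nondecreasing reflection terms. Writing $Z_t:=X_t-Y_t$, I would apply It\^o's formula to $\abs{Z_t}^2$ and take expectations (after a standard localization removing the local-martingale term, which then has zero expectation by the moment bounds of Proposition~\ref{pro:3.2}):
\[
\e\abs{Z_t}^2=\mathcal{W}_2(\mu_0,\nu_0)^2+\int_0^t\e\zkuo{2Z_s\bigl(b(X_s,\law{X_s})-b(Y_s,\law{Y_s})\bigr)+\abs{\sigma(X_s,\law{X_s})-\sigma(Y_s,\law{Y_s})}^2}\,ds+2\int_0^t\e\zkuo{Z_s}\,d(K_s^X-K_s^Y).
\]
By Assumption~\ref{ass:A_5} the first integrand is at most $C_1\mathcal{W}_2(\law{X_s},\law{Y_s})^2-C_2\abs{X_s-Y_s}^2$, and since $(X_s,Y_s)$ is a coupling of $(\law{X_s},\law{Y_s})$ we have $\mathcal{W}_2(\law{X_s},\law{Y_s})^2\le\e\abs{Z_s}^2$; therefore
\[
\e\abs{Z_t}^2\le\mathcal{W}_2(\mu_0,\nu_0)^2-(C_2-C_1)\int_0^t\e\abs{Z_s}^2\,ds+2\int_0^t\e\zkuo{Z_s}\,d(K_s^X-K_s^Y).
\]

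The heart of the proof is to show that the reflection term is nonpositive:
\[
\int_0^t\e\zkuo{X_s-Y_s}\,d(K_s^X-K_s^Y)\le 0.
\]
Here I would use the Skorokhod minimality condition in \eqref{eq:1.1}: the Stieltjes measure $dK^X$ is carried by $\dkuo{s:\e\zkuo{h(X_s)}=0}$ and $dK^Y$ by $\dkuo{s:\e\zkuo{h(Y_s)}=0}$, so $dK^X$-a.e.\ one has $\e\zkuo{h(X_s)}=0\le\e\zkuo{h(Y_s)}$ and $dK^Y$-a.e.\ one has $\e\zkuo{h(Y_s)}=0\le\e\zkuo{h(X_s)}$. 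Combining these with the monotonicity and bi-Lipschitz property of $h$ (Assumption~\ref{ass:A_2}) and the explicit representation $K_s^X=\sup_{r\le s}G_0(\law{U_r^X})$, $K_s^Y=\sup_{r\le s}G_0(\law{U_r^Y})$ from Theorem~\ref{thm:1}, together with the Lipschitz estimates of Lemmas~\ref{lem:2.1}--\ref{lem:2.2} for the free parts $U^X,U^Y$ (defined as in \eqref{eq:U}), I would deduce that $\e\zkuo{X_s-Y_s}\le 0$ for $dK^X$-a.e.\ $s$ and $\e\zkuo{X_s-Y_s}\ge 0$ for $dK^Y$-a.e.\ $s$, which gives the displayed inequality. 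I expect this passage, from the mean constraint on $\e\zkuo{h(\cdot)}$ to the sign of the mean difference $\e\zkuo{X_s-Y_s}$ tested against $dK^X$ and $dK^Y$, to be the main obstacle (in the affine case $h(x)=x$ the constraint reads $\e\zkuo{X_s}=0$ on the support of $dK^X$ and the sign is immediate, but for general bi-Lipschitz $h$ one must argue through the maps $H$, $\bar G_0$, $G_0$); the remaining estimates are routine.

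Granting this, we obtain $\e\abs{Z_t}^2\le\mathcal{W}_2(\mu_0,\nu_0)^2-(C_2-C_1)\int_0^t\e\abs{Z_s}^2\,ds$, and since $C_2>C_1$, Gr\"onwall's lemma yields $\e\abs{Z_t}^2\le e^{-(C_2-C_1)t}\mathcal{W}_2(\mu_0,\nu_0)^2$. Since $(\law{X_t},\law{Y_t})=(P^*_t\mu_0,P^*_t\nu_0)$ and $(X_t,Y_t)$ is an admissible coupling of this pair, we conclude $\mathcal{W}_2(P^*_t\mu_0,P^*_t\nu_0)^2\le\e\abs{Z_t}^2\le e^{-(C_2-C_1)t}\mathcal{W}_2(\mu_0,\nu_0)^2$, as claimed.
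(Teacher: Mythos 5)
Your overall architecture --- synchronous coupling driven by the same Brownian motion, It\^{o}'s formula for $|X_t-Y_t|^2$, the dissipativity Assumption \ref{ass:A_5}, discarding the reflection term, then Gronwall --- is exactly the route the paper takes, so the frame matches. The genuine gap is the step you yourself flag and then defer: the claim that $\mathbb{E}[X_s-Y_s]\le 0$ for $dK^X$-a.e.\ $s$ and $\mathbb{E}[X_s-Y_s]\ge 0$ for $dK^Y$-a.e.\ $s$. The Skorokhod condition only gives $\mathbb{E}[h(X_s)]=0\le\mathbb{E}[h(Y_s)]$ on the support of $dK^X$, and for a general increasing bi-Lipschitz $h$ this does not control the sign of $\mathbb{E}[X_s-Y_s]$: take $h(x)=x$ for $x\ge 0$ and $h(x)=2x$ for $x<0$ (so $m=1$, $M=2$), $\mathcal{L}_{X_s}=\tfrac13\delta_{-1}+\tfrac23\delta_{1}$ and $Y_s\equiv 0$; then $\mathbb{E}[h(X_s)]=0\le\mathbb{E}[h(Y_s)]$ while $\mathbb{E}[X_s-Y_s]=\tfrac13>0$. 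Monotonicity and the bi-Lipschitz bounds on $h$, as well as the Lipschitz estimates for $H$, $\bar G_0$, $G_0$ in Lemmas \ref{lem:2.1}--\ref{lem:2.2} and the representation of $K$ in Theorem \ref{thm:1}, compare values of $h$ (or of $G_0$), not first moments, so the passage you ask for cannot be produced by the tools you cite. As written, the central inequality $\int_0^t\mathbb{E}[X_s-Y_s]\,d(K^X_s-K^Y_s)\le 0$ is asserted rather than proved, and it is precisely the heart of the lemma.

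For comparison, the paper's own proof treats this point by asserting $\mathbb{E}[X_t]\ge 0$ and $\mathbb{E}[Y_t]\ge 0$ from the constraint ``since $h$ is bi-Lipschitz'', using these signs to discard the cross terms $\int\mathbb{E}[Y_s]\,dK^X_s$ and $\int\mathbb{E}[X_s]\,dK^Y_s$ and the flatness conditions for the diagonal ones; that argument needs sign information of exactly the kind you are missing, and it is only sound when $\mathbb{E}[h(X)]\ge 0$ really forces $\mathbb{E}[X]\ge 0$ (e.g.\ $h$ linear, or concave with $h(0)\le 0$, via Jensen) --- the same counterexample above shows it fails for general bi-Lipschitz increasing $h$. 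So your proposal mirrors the paper's strategy, but the deferred step requires either a genuine argument or an additional structural hypothesis on $h$, and until one is supplied the proof is incomplete. A minor further point: Assumption \ref{ass:A_5} as printed bounds $2(x-y)(b(x,\mu)-b(y,\nu))+|\sigma(x,\mu)-\sigma(y,\nu)|$ without the square, whereas your It\^{o} expansion (correctly) produces the squared difference; you should flag this mismatch or state the dissipativity in the squared form you actually use.
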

\begin{proof}
	From the definition of $P^*_t$, we have $P^*_t\mu_0 = \mu_t$, $P^*_t\nu_0 = \nu_t$. 
	
	Apply It\^{o}'s lemma to $(X_t- Y_t)^2$, we have 
	\begin{equation}\label{eq:lem_6_1_1}
\begin{aligned}
	  \kuo{X_t - Y_t}^2 &= (\xi^{X_0}-\xi^{Y_0})^2+\int_0^t 2\kuo{X_t - Y_t}\zkuo{b(X_t,\mu_t) - b(Y_t,\nu_t)}dt + \int_0^t \sigma(X_t,\mu_t) - \sigma(Y_t,\nu_t)dt \\
	  &\qquad+ \int_0^t 2\kuo{X_t - Y_t}\zkuo{\sigma(X_t,\mu_t) - \sigma(Y_t,\nu_t)}dB_t + \int_0^t 2(X_t-Y_t)d(K^X_t - K^Y_t)\,.
	  \end{aligned}
\end{equation}
Since $X_t$ and $Y_t$ is the solution of \eqref{eq:1.1} and $h$ is bi-Lipschitz , we have $\e\zkuo{X_t}\geq 0$ and $\e\zkuo{Y_t}\geq 0$. Combining  $K^X_t$ and $K^Y_t$ are non-decrease processes,  we have 
$$
\int_0^t\e\zkuo{X_t}dK^Y_t\geq 0\,, \quad \int_0^t\e\zkuo{Y_t}dK^X_t \geq 0\,.
$$
By the definition of solution \eqref{eq:1.1}, we obtain $$
\int_0^t\e\zkuo{h(X_t)}dK^X_t =\int_0^t \e\zkuo{h(Y_t)}dK^Y_t = 0\,.
$$
Taking expectation of \eqref{eq:lem_6_1_1} and applying stochastic Fubini's Theorem, we have the following estimation 
$$
\e\kuo{X_t - Y_t}^2\leq  (\xi^{X_0}-\xi^{Y_0})^2+\e\int_0^t 2\kuo{X_t - Y_t}\zkuo{b(X_t,\mu_t) - b(Y_t,\nu_t)}dt + \e\int_0^t \sigma(X_t,\mu_t) - \sigma(Y_t,\nu_t)dt $$
Apply Assumption \ref{ass:A_5}  and Gronwall's lemma, we obtain
$$
\mathcal{W}_2\kuo{P^*_{t}\mu_0,P^*_t\nu_0}^2 \leq \e\zkuo{|X_t-Y_t|^2}\leq e^{-(C_2-C_1)t}\e\zkuo{|\xi^{X_0}-\xi^{Y_0}|^2}\leq  e^{-(C_2-C_1)t} \mathcal{W}_2\left(\mu_0, \nu_0\right)^2\,.
$$
\end{proof}
\begin{theorem}[Existence of invariant probability measure]\label{thm:eipm}
Suppose Assumptions \ref{ass:A_1}, \ref{ass:A_2} and \ref{ass:A_5} hold. $P_t$ has a unique invariant probability measure $\mu \in \mathcal{P}_2$ i.e. $P_t^* \mu=\mu, t>0$ such that 
$$
\mathcal{W}_2\left(P_t^* \nu_0, \mu\right)^2 \leq \mathcal{W}_2\left(\nu_0, \mu\right)^2 \mathrm{e}^{-\left(C_2-C_1\right) t}, \quad t \geq 0, \nu_0 \in \mathcal{P}_2 .
$$
\end{theorem}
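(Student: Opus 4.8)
The plan is to run a Banach fixed point argument for the (nonlinear) semigroup $P_t^*$ on the complete metric space $(\mathcal{P}_2(\mathbb{R}),\mathcal{W}_2)$, with the contraction coming directly from Lemma \ref{lem:6.1}. First I would record two preliminary facts. Since $b,\sigma$ are time-homogeneous and \eqref{eq:1.1} admits a unique solution (Theorem \ref{thm:1}), the family $\{P_{s,t}^*\}$ depends only on $t-s$; writing $P_t^*:=P_{0,t}^*$, the flow property of the solution (solve up to time $s$, then restart from the law $P_s^*\mu$ by uniqueness and time-homogeneity) gives the semigroup identity $P_{t+s}^*=P_t^*P_s^*$ for all $s,t\ge 0$. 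Moreover, by the $L^2$ estimate of Proposition \ref{pro:3.2}(i), $P_t^*$ maps $\mathcal{P}_2(\mathbb{R})$ into itself, and $(\mathcal{P}_2(\mathbb{R}),\mathcal{W}_2)$ is a complete metric space.

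Next, fix any $t_0>0$. Given $\mu_0,\nu_0\in\mathcal{P}_2(\mathbb{R})$, pick an optimal $\mathcal{W}_2$-coupling of the initial data, i.e. $\xi^{X_0}\sim\mu_0$, $\xi^{Y_0}\sim\nu_0$ with $\e[|\xi^{X_0}-\xi^{Y_0}|^2]=\mathcal{W}_2(\mu_0,\nu_0)^2$ (the infimum is attained on $\mathbb{R}$). Lemma \ref{lem:6.1} then yields
$$
\mathcal{W}_2(P_{t_0}^*\mu_0,P_{t_0}^*\nu_0)^2\le e^{-(C_2-C_1)t_0}\,\mathcal{W}_2(\mu_0,\nu_0)^2,
$$
and since $C_2>C_1$ we have $e^{-(C_2-C_1)t_0}<1$, so $P_{t_0}^*$ is a strict contraction on $(\mathcal{P}_2(\mathbb{R}),\mathcal{W}_2)$. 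Banach's fixed point theorem provides a unique $\mu\in\mathcal{P}_2(\mathbb{R})$ with $P_{t_0}^*\mu=\mu$.

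It remains to promote this to invariance under the whole semigroup and to read off the decay rate. For any $t>0$, the semigroup identity gives $P_{t_0}^*(P_t^*\mu)=P_t^*(P_{t_0}^*\mu)=P_t^*\mu$, so $P_t^*\mu$ is again a fixed point of $P_{t_0}^*$; by uniqueness $P_t^*\mu=\mu$, hence $\mu$ is invariant. Conversely, any invariant probability measure in $\mathcal{P}_2$ is in particular fixed by $P_{t_0}^*$, hence coincides with $\mu$, which gives uniqueness. Finally, for arbitrary $\nu_0\in\mathcal{P}_2$, applying Lemma \ref{lem:6.1} with the second initial law equal to $\mu$ and using $P_t^*\mu=\mu$ gives
$$
\mathcal{W}_2(P_t^*\nu_0,\mu)^2=\mathcal{W}_2(P_t^*\nu_0,P_t^*\mu)^2\le e^{-(C_2-C_1)t}\,\mathcal{W}_2(\nu_0,\mu)^2,
$$
as claimed.

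This is a soft fixed-point argument, so there is no serious obstacle; the only points needing care are the verification of the semigroup identity $P_{t+s}^*=P_t^*P_s^*$ for the distribution-dependent dynamics (which rests on the well-posedness of Theorem \ref{thm:1}) and the completeness of $(\mathcal{P}_2(\mathbb{R}),\mathcal{W}_2)$, so that Banach's theorem may be invoked. One should also note that the estimate of Lemma \ref{lem:6.1}, although phrased through a specific optimal coupling of the initial conditions, does bound $\mathcal{W}_2(P_{t_0}^*\mu_0,P_{t_0}^*\nu_0)$, since the latter is an infimum over all couplings of the time-$t_0$ laws and the coupled pair $(X_{t_0},Y_{t_0})$ is one admissible such coupling.
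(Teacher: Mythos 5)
Your proof is correct, but it takes a genuinely different route from the paper's. The paper does not use a fixed point theorem: it constructs the invariant measure as the long-time limit of the single family $\{P_t^*\delta_0\}_{t\ge 0}$, showing it is $\mathcal{W}_2$-Cauchy via Lemma \ref{lem:6.1} together with the semigroup property, which requires the uniform-in-time bound $\sup_{s\ge 0}\mathbb{E}|X_s^0|^2<\infty$ (justified in the paper by citing Proposition \ref{pro:3.2}, whose constants actually depend on $T$, so that step really leans on the dissipativity Assumption \ref{ass:A_5} and is handled somewhat loosely); invariance of the limit $\mu$ is then obtained by letting $s\to\infty$ in $\mathcal{W}_2(P_t^*\mu,P_{t+s}^*\delta_0)$. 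You instead fix $t_0>0$, observe that Lemma \ref{lem:6.1} makes $P_{t_0}^*$ a strict contraction on the complete space $(\mathcal{P}_2,\mathcal{W}_2)$, take the unique Banach fixed point, and upgrade it to invariance under the whole semigroup through the commutation $P_{t_0}^*P_t^*=P_t^*P_{t_0}^*$; the exponential estimate and uniqueness then follow exactly as you state. What your route buys is that only the finite-horizon moment bound of Proposition \ref{pro:3.2} is needed (to see that $P_{t_0}^*$ preserves $\mathcal{P}_2$), so you bypass the uniform-in-time estimate; what the paper's route buys is constructiveness, exhibiting $\mu$ as $\lim_{t\to\infty}P_t^*\delta_0$. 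Both arguments rest on the same ingredients (Lemma \ref{lem:6.1}, the nonlinear semigroup property of $P_t^*$, completeness of the Wasserstein space) and both gloss over the same small point: $P_t^*\nu_0$ is only defined for initial laws with $\int h\,d\nu_0\ge 0$, so strictly one should work on the closed (hence complete) subset $\{\nu\in\mathcal{P}_2:\int h\,d\nu\ge 0\}$, on which your fixed-point argument runs unchanged.
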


\begin{proof}
	{We first show that there exists a $\mu \in \mathcal{P}_2(\mathbb{R})$} such that 
	\begin{equation}\label{eq:thm_6_2_1}
\begin{aligned}
	  	\lim_{t\rightarrow \infty}\mathcal{W}_2\kuo{P^*_t\delta_0,\mu} = 0\,.
\end{aligned}
\end{equation}
	To this end, it suffices to show that $\left\{P_t^* \delta_0\right\}_{t \geq 0}$ is a $\mathcal{W}_{2}$-Cauchy family when $t \rightarrow \infty$. Using the semigroup property of  $P_{t+s}^*$ and the exponential convergence, we have
$$
\begin{aligned}
 \mathcal{W}_2\left(P_{t+s}^* \delta_0, P_t^* \delta_0\right)^2 \leq \mathcal{W}_2\left(P_s^* \delta_0, \delta_0\right)^2 \mathrm{e}^{-\left(C_2-C_1\right) t}  =\mathrm{e}^{-\left(C_2-C_1\right) t} \mathbb{E}\left|X_s^0\right|^2, \quad s, t \geq 0 \,,
\end{aligned}
$$
where $X^0_s$ is a solution of MR-MVSDE whose initial condition is $\xi = 0$. Then, we only need to show 
$$
\sup_{s\geq 0}\e\zkuo{|X^0_s|^2}< \infty\,.
$$
It is true from Proposition \ref{pro:3.2}. By \eqref{eq:thm_6_2_1}, $P^*_{t+s} = P^*_t P^*_s$ and 
$$
\mathcal{W}_2\kuo{P^*_t\mu_1,P^*_t \mu_2}\leq e^{-(C_2-C_1)t/2}\mathcal{W}_2\kuo{\mu_1,\mu_2}\,,
$$
we obtain that
$$
\mathcal{W}_2\kuo{P^*_t\mu,\mu} = \lim_{s\rightarrow \infty}\mathcal{W}_2\kuo{P^*_t \mu , P^*_{t+s}\delta_0}\leq e^{-(C_2-C_1)t/2}\lim_{s\rightarrow \infty}\mathcal{W}_2\kuo{\mu,P^*_s\delta_0} = 0\,.
$$
That means $P^*_t \mu = \mu$.
\end{proof}
\subsection{Log-Harnack inequality and Shift Harnack inequality}
In this subsection, we investigate the dimension-free log-Harnack inequality and Shift Harnack inequality. Using the coupling by change of measure method, we establish log-Harnack inequalities for $P_t f$, as detailed in \cite{WFY_Harnack_AOP,wang2013harnack}. However, for this purpose, we need to assume that the noise term is distribution-free and that $|\sigma(x)^{-1}|\leq \lambda$. This implies that we consider a particular version of \eqref{eq:1.1}:
\begin{equation}\label{eq:6_2_1}
\left\{\begin{array}{l}
X_t=\xi^{X_0}+\int_0^t b\left(X_s, \mathcal{L}_{X_s}\right) d s+\int_0^t \sigma\left(X_s\right) d B_s+K_t, \quad t \geq 0 \\
\mathbb{E}\left[h\left(X_t\right)\right] \geq 0, \quad \int_0^t \mathbb{E}\left[h\left(X_s\right)\right] d K_s=0, \quad t \geq 0
\end{array}\right.\,.
\end{equation}
	For any $\mu_0 \in \mathcal{P}_2(\mathbb{R})$ and $r \geq 0$, let $B(\mu_0,r) = \dkuo{\nu\in \mathcal{P}_2(\mathbb{R}):\mathcal{W}_2(\mu_0,\nu)\leq r}$. Let 
	$$
	\phi(s,t) = \lambda^2 \kuo{\frac{C_1}{1-e^{-C_1(t-s)}}+\frac{tC_2^2\exp\kuo{2(C_1+C_2)(t-s)}}{2}},\quad 0\leq s<t\,.
	$$
Let $\xi_t = C_1^{-1}(1-e^{C_1(t- T)})$, $\nu_t = P^*_t\nu_0$ and let $\xi^{Y_0}$ be $\mathcal{F}_0-$measurable with $\mathcal{L}_{\xi^{Y_0}} = \nu_0$. Consider the following MR-MVSDE:
\begin{equation}\label{eq:6_2_2}
\left\{\begin{array}{l}
Y_t=\xi^{Y_0}+\int_0^t b\left(Y_s, \mathcal{L}_{Y_s}\right) d s+\int_0^t \frac{1}{\xi_s}\sigma(Y_s) \sigma(X_s)^{-1}(X_s-Y_s)ds+\int_0^t \sigma\left(Y_s\right) d B_s+K^Y_t, \quad t \geq 0 \\
\mathbb{E}\left[h\left(Y_t\right)\right] \geq 0, \quad \int_0^t \mathbb{E}\left[h\left(Y_s\right)\right] d K^Y_s=0, \quad t \geq 0
\end{array}\right.\,.
\end{equation}

{From \eqref{eq:Y_in_Q}, we know that under the measure $\mathbb{Q}$, the coefficients of \eqref{eq:6_2_2} satisfy the conditions of Theorem \ref{thm:1}, hence \eqref{eq:Y_in_Q} has a unique solution. Using the Girsanov transformation, we know that \eqref{eq:6_2_2} has a unique solution $\kuo{Y_t}_{t\in [0,T]}$ under $\mathbb{P}$.}

Let 
$$
\tau_n = T \wedge \inf\dkuo{t \in [0,T): |X_t|+|Y_t| \geq n}\,,
$$
we have $\tau_n \rightarrow T$ as $n\rightarrow \infty$. We first show 
\begin{equation*}
\begin{aligned}
	  R_t :=\exp\dkuo{\int_0^t \frac{1}{\xi_s}\sigma\kuo{X_s}^{-1}(Y_s-X_s)dB_s - \frac{1}{2}\int_0^t \frac{\abs{\sigma\kuo{X_s}^{-1}(Y_s-X_s)}^2}{\xi_s^2}ds}\,,
\end{aligned}
\end{equation*}
is a uniformly integrable martingale for $t \in [0,T]$.

\begin{theorem}[Log-Harnack inequality]\label{thm:lhi}
Suppose Assumptions \ref{ass:A_1} and \ref{ass:A_2} hold and $\sigma(x)^{-1}<\infty$. Let $0\leq s<t\leq T$
\begin{enumerate}[(i)]
	\item For any $\mu_0,\nu_0\in \mathcal{P}_2(\mathbb{R})$,
	$$
	\left(P_{s, t} \log f\right)\left(\nu_0\right) \leq \log \left(P_{s, t} f\right)\left(\mu_0\right)+\phi(s, t) \mathcal{W}_2\left(\mu_0, \nu_0\right)^2\,, \quad f \in \mathcal{B}_b^{+}\left(\mathbb{R}\right)\,.
	$$
	Consequently,
	$$
\left|\nabla P_{s, t} f\right|^2 \leq 2 \phi(s, t)\left\{P_{s, t} f^2-\left(P_{s, t} f\right)^2\right\}, \quad f \in \mathcal{B}_b\left(\mathbb{R}\right)\,.
$$
	\item For any different $\mu_0,\nu_0\in \mathcal{P}_2(\mathbb{R})$ and $f \in \mathcal{B}_b^{+}\left(\mathbb{R}\right)$,
	$$
	\begin{aligned}
& \frac{\left|\left(P_{s, t} f\right)\left(\mu_0\right)-\left(P_{s, t} f\right)\left(\nu_0\right)\right|^2}{\mathcal{W}_2\left(\mu_0, \nu_0\right)^2}  \quad \leq 2 \phi(s, t) \sup _{\nu \in B\left(\mu_0, \mathcal{W}_2\left(\mu_0, \nu_0\right)\right)}\left\{\left(P_{s, t} f^2\right)(\nu)-\left(P_{s, t} f\right)^2(\nu)\right\} \,,
\end{aligned}
	$$
 where $B(\mu_0, \mathcal{W}_2(\mu_0, \nu_0))$ is a ball in the Wasserstein spaces $\mathcal{P}_2(\mathbb{R})$ with radius  $\mathcal{W}_2(\mu_0, \nu_0)$.
		Consequently,
		$$
		\begin{aligned}
\left\|P_{s, t}^* \mu_0-P_{s, t}^* v_0\right\|_{v a r} & :=2 \sup _{A \in \mathcal{B}_b\left(\mathbb{R}\right)}\left|\left(P_{s, t}^* \mu_0\right)(A)-\left(P_{s, t}^* v_0\right)(A)\right| \leq \sqrt{2 \phi(s, t)} \mathcal{W}_2\left(\mu_0, v_0\right) .
\end{aligned}
		$$
\end{enumerate}
\end{theorem}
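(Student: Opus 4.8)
The plan is to run a coupling by change of measure, in the spirit of \cite{WFY_Harnack_AOP,wang2013harnack}, adapted to the mean-reflected and distribution-dependent structure. Since $b,\sigma$ are time-homogeneous and the reflection increment $K_t-K_s$ depends only on $X_s$ and the noise on $[s,t]$ (the Remark after Theorem \ref{thm:1}), it is enough to treat $s=0$, $t=T$; the general interval $[s,t]$ is handled verbatim with $\xi_{\cdot}$ and $R_{\cdot}$ reindexed on $[s,t]$, which is exactly what produces the factors $C_1/(1-\mathrm{e}^{-C_1(t-s)})$ and $\mathrm{e}^{2(C_1+C_2)(t-s)}$ in $\phi(s,t)$. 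Fix $f\in\mathcal{B}_b^+(\mathbb{R})$, strictly positive (the general case following by replacing $f$ with $f+\varepsilon$ and letting $\varepsilon\downarrow 0$). Let $X$ solve \eqref{eq:6_2_1} with $\law{\xi^{X_0}}=\mu_0$, driven by $B$, with decoupled marginal flow $\mu_t:=P_t^*\mu_0$ and reflection $K$, and let $(Y,K^Y)$ solve the coupled equation \eqref{eq:6_2_2} with $\law{\xi^{Y_0}}=\nu_0$, whose additional drift $\xi_s^{-1}\sigma(Y_s)\sigma(X_s)^{-1}(X_s-Y_s)$ is singular as $s\uparrow T$ (since $\xi_T=0$) and is calibrated so that $X_T=Y_T$. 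With $R_t$ as defined before the theorem and $\tau_n\uparrow T$ the given localizing times, the first task is to check that $(R_t)_{t\in[0,T]}$ is a uniformly integrable $\p$-martingale: on $[0,\tau_n]$ the integrand is bounded so $(R_{t\wedge\tau_n})$ is a true martingale, and passing to the limit requires $\int_0^T\xi_s^{-2}\,\e\abs{X_s-Y_s}^2\,ds<\infty$, which follows from the a priori bound $\e\abs{X_s-Y_s}^2\le C\,\xi_s^2\,\mathcal{W}_2(\mu_0,\nu_0)^2$ established below together with $\abs{\sigma(x)^{-1}}\le\lambda$; this gives $\e R_T=1$ and uniform integrability.

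Set $d\mathbb{Q}=R_T\,d\p$. By Girsanov's theorem, $\tilde B_t:=B_t-\int_0^t\xi_s^{-1}\sigma(X_s)^{-1}(Y_s-X_s)\,ds$ is a $\mathbb{Q}$-Brownian motion; under $\mathbb{Q}$ the extra drift in \eqref{eq:6_2_2} is absorbed, so $Y$ solves the bona fide reflected MV-SDE $dY_t=b(Y_t,\mathcal{L}^{\mathbb{Q}}_{Y_t})\,dt+\sigma(Y_t)\,d\tilde B_t+dK^Y_t$ with $\mathcal{L}^{\mathbb{Q}}(Y_0)=\mathcal{L}^{\p}(Y_0)=\nu_0$ (as $R_0=1$), and by the strong well-posedness of \eqref{eq:6_2_1} from Theorem \ref{thm:1} this forces $\mathcal{L}^{\mathbb{Q}}_{Y_t}=\nu_t=P_t^*\nu_0$ for all $t$. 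Hence, under $\mathbb{Q}$, $Y$ is exactly the MR-MVSDE solution from $\nu_0$ and $\kuo{P_{0,T}g}(\nu_0)=\e^{\mathbb{Q}}\zkuo{g(Y_T)}$ for $g\in\mathcal{B}_b(\mathbb{R})$; meanwhile $X$ acquires under $\mathbb{Q}$ the extra drift $\xi_t^{-1}(Y_t-X_t)\,dt$ pointing towards $Y$ (the $\sigma(X_t)\sigma(X_t)^{-1}$ cancelling), and the calibration of $\xi_{\cdot}$ guarantees $X_T=Y_T$, $\mathbb{Q}$-a.s.

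For the log-Harnack inequality itself, using $X_T=Y_T$ and then Jensen's inequality for $\mathbb{Q}\ll\p$,
\[
\kuo{P_{0,T}\log f}(\nu_0)=\e^{\mathbb{Q}}\zkuo{\log f(X_T)}\le\log\e\zkuo{f(X_T)}+\e^{\mathbb{Q}}\zkuo{\log R_T}=\log\kuo{P_{0,T}f}(\mu_0)+\e^{\mathbb{Q}}\zkuo{\log R_T}.
\]
Expressing $\log R_T$ through $\tilde B$ and taking $\e^{\mathbb{Q}}$ (the resulting $\tilde B$-stochastic integral being a $\mathbb{Q}$-martingale),
\[
\e^{\mathbb{Q}}\zkuo{\log R_T}=\tfrac12\,\e^{\mathbb{Q}}\!\int_0^T\frac{\abs{\sigma(X_s)^{-1}(Y_s-X_s)}^2}{\xi_s^2}\,ds\le\frac{\lambda^2}{2}\,\e^{\mathbb{Q}}\!\int_0^T\frac{\abs{X_s-Y_s}^2}{\xi_s^2}\,ds.
\]
It remains to estimate $v_t:=\e^{\mathbb{Q}}\abs{X_t-Y_t}^2$. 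Applying It\^o's formula to $\abs{X_t-Y_t}^2$ under $\mathbb{Q}$ and taking expectations removes the $\tilde B$-martingale; using the one-sided estimate $2(x-y)(b(x)-b(y))+\abs{\sigma(x)-\sigma(y)}^2\le C_2\abs{x-y}^2+C_1\mathcal{W}_2(\mu,\nu)^2$ (a consequence of Assumption \ref{ass:A_1}), the contraction term $-2\xi_t^{-1}\abs{X_t-Y_t}^2$ from the coupling drift, the Lipschitz-in-law bound $\mathcal{W}_2(\mu_t,\nu_t)^2\le \mathrm{e}^{Ct}\mathcal{W}_2(\mu_0,\nu_0)^2$, and controlling the mean-reflection cross term $\int_0^t 2\,\e^{\mathbb{Q}}\zkuo{X_s-Y_s}\,d(K_s-K^Y_s)$ exactly as in the proof of Lemma \ref{lem:6.1} (via the Skorokhod conditions $\e^{\mathbb{Q}}\zkuo{h(Y_s)}\ge0$, $\int\e^{\mathbb{Q}}\zkuo{h(Y_s)}\,dK^Y_s=0$, now valid since $Y$ is the MV-SDE solution under $\mathbb{Q}$, together with the nonnegativity and monotonicity of $K,K^Y$), one gets $\dot v_t\le(C_2-2\xi_t^{-1})v_t+C_1\mathrm{e}^{Ct}\mathcal{W}_2(\mu_0,\nu_0)^2$ with $v_0=\mathcal{W}_2(\mu_0,\nu_0)^2$. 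Since $\int_0^t(C_2-2\xi_r^{-1})\,dr=(C_2-2C_1)t+2\log(\xi_t/\xi_0)$, Gronwall's lemma gives $v_s\le C(s)\,\xi_s^2\,\mathcal{W}_2(\mu_0,\nu_0)^2$; inserting this into the bound for $\e^{\mathbb{Q}}[\log R_T]$ and integrating the $\xi_s^{-2}$-weighted terms yields precisely $\phi(0,T)\,\mathcal{W}_2(\mu_0,\nu_0)^2$, proving (i). The $L^2$-gradient bound $\abs{\nabla P_{s,t}f}^2\le2\phi(s,t)\{P_{s,t}f^2-(P_{s,t}f)^2\}$ then follows from the log-Harnack inequality by the standard device of applying it to $1+\varepsilon f$ and letting $\varepsilon\downarrow0$ (see \cite{wang2013harnack}).

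For (ii), interpolate $\mu_0$ and $\nu_0$ by a constant-speed Wasserstein geodesic $(\mu^\theta)_{\theta\in[0,1]}$, which satisfies $\mathcal{W}_2(\mu_0,\mu^\theta)=\theta\,\mathcal{W}_2(\mu_0,\nu_0)$ and hence stays in $B(\mu_0,\mathcal{W}_2(\mu_0,\nu_0))$; writing $\kuo{P_{s,t}f}(\nu_0)-\kuo{P_{s,t}f}(\mu_0)=\int_0^1\frac{d}{d\theta}\kuo{P_{s,t}f}(\mu^\theta)\,d\theta$, bounding each integrand by $\abs{\nabla P_{s,t}f}(\mu^\theta)$ times the (constant, equal to $\mathcal{W}_2(\mu_0,\nu_0)$) metric speed, and inserting the gradient bound yields the stated estimate with the supremum over the ball. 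Finally, taking $f=\mathbbm{1}_A$ for $A\in\mathcal{B}_b(\mathbb{R})$, so that $P_{s,t}f^2-(P_{s,t}f)^2=(P_{s,t}\mathbbm{1}_A)(1-P_{s,t}\mathbbm{1}_A)\le\tfrac14$, and optimising over $A$ gives $\norm{P_{s,t}^*\mu_0-P_{s,t}^*\nu_0}_{var}\le\sqrt{2\phi(s,t)}\,\mathcal{W}_2(\mu_0,\nu_0)$. The two steps that demand real care are the verification that $R$ is a genuine uniformly integrable martingale up to the coupling time $T$ — which rests entirely on the a priori bound $v_s\le C\xi_s^2\mathcal{W}_2(\mu_0,\nu_0)^2$ — and, at the heart of that bound, the control of the mean-reflection cross term: unlike in Lemma \ref{lem:6.1}, under $\mathbb{Q}$ the process $X$ no longer obeys its own mean constraint, so one must genuinely exploit that it is $Y$, not $X$, that is the MR-MVSDE solution under $\mathbb{Q}$, and combine its Skorokhod conditions with the sign of the Stieltjes measures $dK$, $dK^Y$.
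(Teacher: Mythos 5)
Your overall strategy coincides with the paper's: couple $X$ with the auxiliary process $Y$ of \eqref{eq:6_2_2}, show $R$ is a uniformly integrable martingale, pass to $\mathbb{Q}=R_T\,\p$ under which $Y$ is the MR-MVSDE solution from $\nu_0$ and $X_T=Y_T$, then conclude by the entropy (Young/Jensen) inequality together with $\e^{\mathbb{Q}}[\log R_T]\le\phi(0,T)\mathcal{W}_2(\mu_0,\nu_0)^2$, the remaining assertions in (i)--(ii) being derived as in the cited references, exactly as the paper does. The gap is in the step on which, as you yourself stress, everything rests: the a priori estimate for $v_t:=\e^{\mathbb{Q}}\abs{X_t-Y_t}^2$. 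From your differential inequality $\dot v_t\le(C_2-2\xi_t^{-1})v_t+C_1e^{Ct}\mathcal{W}_2(\mu_0,\nu_0)^2$, Gronwall does \emph{not} give $v_t\le C\xi_t^2\mathcal{W}_2(\mu_0,\nu_0)^2$ with bounded $C$: the integrating-factor formula gives $v_t\le e^{(C_2-2C_1)t}(\xi_t/\xi_0)^2\kuo{v_0+\int_0^t e^{-(C_2-2C_1)r}(\xi_0/\xi_r)^2 g_r\,dr}$, and since $\xi_r\sim T-r$ near $T$ the inner integral blows up like $\xi_t^{-1}$, so your inequality only yields $v_t\lesssim\xi_t$ (the forcing term is of order one rather than of order $\sqrt{v_t}$). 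Consequently $\int_0^T\xi_s^{-2}v_s\,ds$ diverges, and neither the uniform integrability of $R$ nor the bound $\e^{\mathbb{Q}}[\log R_T]\le\phi(0,T)\mathcal{W}_2(\mu_0,\nu_0)^2$ follows by your route; in particular the claimed constant $\phi(s,t)$ cannot be recovered this way.

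The damage is done when you apply Young's inequality too early, replacing the Lipschitz cross term by $C_1\mathcal{W}_2(\mu,\nu)^2$ and thereby discarding the factor $\abs{x-y}$. The paper's Lemma \ref{lem:6.3} keeps the estimate in the form $2(x-y)\kuo{b(x,\mu)-b(y,\nu)}+\abs{\sigma(x)-\sigma(y)}^2\le C_1\abs{x-y}^2+C_2\abs{x-y}\,\mathcal{W}_2(\mu,\nu)$, applies It\^o to $\abs{X_t-Y_t}^2/\xi_t$ (using $\dot\xi_t=C_1\xi_t-1$ and $C_1\xi_s\le1$), and only then uses Young in the form $C_2\xi_s^{-1}\abs{X_s-Y_s}\mathcal{W}_2(\mu_s,\nu_s)\le\tfrac12\xi_s^{-2}\abs{X_s-Y_s}^2+\tfrac12 C_2^2\mathcal{W}_2(\mu_s,\nu_s)^2$, so that part of the singular good term $-2\xi_s^{-2}\abs{X_s-Y_s}^2$ coming from the coupling drift absorbs the cross term. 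Taking $\e^{\mathbb{Q}_{t,n}}$, dropping the nonnegative left-hand side and using the exponential bound on $\mathcal{W}_2(\mu_s,\nu_s)$ gives the \emph{integrated} estimate $\e^{\mathbb{Q}_{t,n}}\int_0^{t\wedge\tau_n}\xi_s^{-2}\abs{X_s-Y_s}^2ds\le 2\xi_0^{-1}\mathcal{W}_2(\mu_0,\nu_0)^2+TC_2^2e^{2(C_1+C_2)T}\mathcal{W}_2(\mu_0,\nu_0)^2$, uniformly in $t$ and $n$; this is exactly what produces $\phi(0,T)$ (note $\xi_0^{-1}=C_1/(1-e^{-C_1T})$) and the uniform integrability of $R$. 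No pointwise bound $v_t\lesssim\xi_t^2$ is needed, nor is one available from the inequality you wrote; replace your Gronwall step by this integrated argument and the rest of your proof goes through as in the paper.
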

Then we establish Shift Harnack inequality for $P_t$ introduced in \cite{Wang2012IntegrationBP}. We should assume that $\sigma(x,\mu)$ does not depend on $x$. Then, MR-MVSDE \eqref{eq:1.1} becomes 
\begin{equation*}
\left\{\begin{array}{l}
X_t=\xi+\int_0^t b\left(X_s, \mathcal{L}_{X_s}\right) d s+\int_0^t \sigma\left(\mathcal{L}_{X_s}\right) d B_s+K_t, \quad t \geq 0 \\
\mathbb{E}\left[h\left(X_t\right)\right] \geq 0, \quad \int_0^t \mathbb{E}\left[h\left(X_s\right)\right] d K_s=0, \quad t \geq 0
\end{array}\right.\,.
\end{equation*}

% TODO Coupling method
\begin{theorem}[Shift Harnack inequality]\label{thm:shi}
 Suppose Assumptions \ref{ass:A_1} and \ref{ass:A_2} hold and $\sigma(\mu)$ is invertible with $\sigma(\mu) + \sigma(\mu)^{-1} $ is bounded.
For any $p>1$, $t\in [0,T]$, $\mu \in \mathcal{P}_2(\mathbb{R})$, $\nu \in \mathcal{P}_2(\mathbb{R})$ and $f\in \mathcal{B}^+_b(\mathbb{R})$, we have 
 	$$
 	\kuo{P_t f}^p(\mu_0)\leq \kuo{P_t f(v + \cdot)^p}(\mu_0)\exp{\frac{p\int_0^t \sigma(\mu_s)^{-2}\dkuo{|v|/t + s|v|/t}^2ds}{2(p-1)}}\,.
 	$$
 	Moreover, for any $f\in \mathcal{B}_b^+(\mathbb{R})$ with $f>1$,
 	$$
 	\kuo{P_t \log f}(\mu_0)\leq \log\kuo{P_t f (v + \cdot)}(\mu_0) + \frac{1}{2}\int_0^t\sigma(\mu_s)^{-2} \dkuo{|v|/t + s|v|/t}^2ds\,.
 	$$
\end{theorem}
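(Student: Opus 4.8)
The plan is to adapt F.-Y.\ Wang's coupling-by-change-of-measure scheme for the shift Harnack inequality \cite{Wang2012IntegrationBP} to the mean-reflected framework, exploiting that here $\sigma$ depends only on the measure argument. First I would fix $\mu_0\in\mathcal{P}_2(\mathbb{R})$, take the solution $(X_s,K_s)_{s\le t}$ with $\mathcal{L}_{X_0}=\mu_0$, and freeze the flow $\mu_s:=\mathcal{L}_{X_s}$. Since $\sigma(\mu_s)$ is then a deterministic function of time and, by Theorem~\ref{thm:1}, $K$ is a deterministic nondecreasing function of the flow $(\mu_s)$, the process $X$ solves the \emph{standard} time-inhomogeneous SDE $\mathrm dX_s=b(X_s,\mu_s)\,\mathrm ds+\sigma(\mu_s)\,\mathrm dB_s+\mathrm dK_s$, which has pathwise uniqueness by Assumption~\ref{ass:A_1}.

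Next I would introduce the shifted process $\tilde X_s:=X_s+\frac{s}{t}v$, so that $\tilde X_0=X_0$ and $\tilde X_t=X_t+v$, together with
\[
\Psi_s:=\sigma(\mu_s)^{-1}\!\left(b(X_s,\mu_s)-b(\tilde X_s,\mu_s)+\tfrac{v}{t}\right),\qquad R_s:=\exp\!\left\{-\int_0^s\!\Psi_r\,\mathrm dB_r-\tfrac12\!\int_0^s\!|\Psi_r|^2\,\mathrm dr\right\}.
\]
The Lipschitz bound on $b$ and the boundedness of $\sigma(\cdot)^{-1}$ give the \emph{deterministic} estimate $|\Psi_s|\le|\sigma(\mu_s)^{-1}|\big(\tfrac{|v|}{t}+K\tfrac{s|v|}{t}\big)$, so Novikov's criterion is immediate; hence $R$ is a uniformly integrable martingale, $\mathrm d\mathbb{Q}:=R_t\,\mathrm d\mathbb{P}$ is a probability measure, and $\tilde B_s:=B_s+\int_0^s\Psi_r\,\mathrm dr$ is a $\mathbb{Q}$-Brownian motion. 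A one-line computation then shows that, driven by $\tilde B$, the shifted process obeys $\mathrm d\tilde X_s=b(\tilde X_s,\mu_s)\,\mathrm ds+\sigma(\mu_s)\,\mathrm d\tilde B_s+\mathrm dK_s$ with $\mathcal{L}^{\mathbb{Q}}_{\tilde X_0}=\mu_0$; by pathwise uniqueness of this frozen equation one gets $\mathcal{L}^{\mathbb{Q}}_{\tilde X_s}=\mathcal{L}^{\mathbb{P}}_{X_s}=\mu_s$ for all $s\le t$. Taking $s=t$ and using $\tilde X_t=X_t+v$ yields the central identity
\[
\e\!\left[R_t\,f(X_t+v)\right]=\e^{\mathbb{Q}}\!\left[f(\tilde X_t)\right]=(P_tf)(\mu_0),\qquad\text{and similarly } \e\!\left[R_t\log f(X_t+v)\right]=(P_t\log f)(\mu_0).
\]

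From here the two inequalities follow by standard manipulations. For the power inequality I would apply Hölder with exponents $p$ and $q=p/(p-1)$ to $\e[R_tf(X_t+v)]$, reducing matters to bounding $\e[R_t^{q}]$; writing $R_t^{q}$ as the product of the exponential martingale $\exp\{-q\int_0^t\Psi_r\,\mathrm dB_r-\frac{q^2}{2}\int_0^t|\Psi_r|^2\,\mathrm dr\}$ (of expectation $1$) and the bounded factor $\exp\{\frac{q^2-q}{2}\int_0^t|\Psi_r|^2\,\mathrm dr\}$, and using $(p-1)\frac{q^2-q}{2}=\frac{p}{2(p-1)}$ together with the deterministic bound on $|\Psi_s|$, gives precisely the claimed exponential factor. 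For the log inequality I would instead use the entropy (Gibbs variational) inequality $\e^{\mathbb{Q}}[\log g]\le\log\e^{\mathbb{P}}[g]+\mathrm{Ent}(\mathbb{Q}\,|\,\mathbb{P})$ with $g=f(X_t+v)$, noting that under $\mathbb{Q}$ one has $\log R_t=-\int_0^t\Psi_r\,\mathrm d\tilde B_r+\frac12\int_0^t|\Psi_r|^2\,\mathrm dr$, so that $\mathrm{Ent}(\mathbb{Q}\,|\,\mathbb{P})=\frac12\e^{\mathbb{Q}}[\int_0^t|\Psi_r|^2\,\mathrm dr]$ is controlled by the same deterministic integral.

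The hard part will be the bookkeeping around the reflection term: one must check that the shifted process under the new measure is still governed by the \emph{same} deterministic compensator $K$. This is exactly where the mean-reflected structure enters — $K_s=\sup_{r\le s}G_0(\mathcal{L}_{U_r})$ is a deterministic functional of the one-dimensional time-marginals of the free part, and these marginals are preserved by the change of measure — together with the assumption that $\sigma$ is independent of the spatial variable, without which the affine shift $X_s+\frac{s}{t}v$ would alter the quadratic variation and destroy the coupling. The remaining ingredients (Girsanov/Novikov, Hölder, the entropy inequality) are routine.
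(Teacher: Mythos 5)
Your proposal is correct and follows essentially the same route as the paper: the shifted coupling $Y_s=X_s+\frac{s}{t}v$ with the frozen law flow $\mu_s$, a Girsanov change of measure whose drift correction admits the deterministic bound $\abs{\sigma(\mu_s)^{-1}}\kuo{|v|/t+Ks|v|/t}$, then H\"older for the power inequality and the Young/entropy inequality for the log version. Your write-up is in fact slightly more careful than the paper's on two points the paper glosses over — the sign convention in the Girsanov density and the justification (weak uniqueness of the frozen equation with the deterministic compensator $K$) that $\mathcal{L}^{\mathbb{Q}}_{\tilde X_t}=\mathcal{L}^{\mathbb{P}}_{X_t}$ — but these are refinements of the same argument, not a different one.
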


\subsection{Proof of Theorem \ref{thm:lhi} and \ref{thm:shi}}
In order to prove Theorem \ref{thm:lhi}, we need the following Lemma.
\begin{lemma}\label{lem:6.3}
	Suppose Assumptions \ref{ass:A_1} and \ref{ass:A_2} hold, and $\sigma(x)^{-1}<\infty$. Let $X_0$, $Y_0$ be two $\mathcal{F}_0-$measurable random variables such that $\mathcal{L}_{\xi^{X_0}} = \mu_0$, $\mathcal{L}_{\xi^{Y_0}} = \nu_0$ and
\begin{equation*}
\begin{aligned}
	  \e\zkuo{\abs{\xi^{X_0}-\xi^{Y_0}}^2} = \mathcal{W}_2\kuo{\mu_0,\nu_0}^2\,.
\end{aligned}
\end{equation*}
Then $\kuo{R_t}_{t\in [0,T]}$ is a uniformly intgrable martingale with 
\begin{equation*}
\begin{aligned}
	  \sup_{t\in [0,T]}\e\zkuo{R_t\log R_t}\leq \phi(0,T)\mathcal{W}_2\kuo{\mu_0,\nu_0}^2\,.
\end{aligned}
\end{equation*}

\end{lemma}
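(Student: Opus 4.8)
\emph{Strategy.} The plan is to regard $R$ as the stochastic exponential of the local martingale $N_t:=\int_0^t \xi_s^{-1}\sigma(X_s)^{-1}(Y_s-X_s)\,dB_s$, to work along a localising sequence on which the stopped process is a genuine martingale, and to bound the relative entropy $\e[R_{\tau_n}\log R_{\tau_n}]$ uniformly in $n$; uniform integrability of $R$ and the announced estimate then follow by passing to the limit. Because $\xi_s=C_1^{-1}(1-e^{-C_1(T-s)})\downarrow 0$ as $s\uparrow T$, the integrand of $N$ is not bounded up to time $T$, so I would take $\tau_n:=\inf\dkuo{t\in[0,T):\abs{X_t}+\abs{Y_t}\geq n}\wedge\kuo{T-\tfrac1n}$. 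On $[0,\tau_n]$ one has $\xi_s\geq\xi_{T-1/n}>0$, $\abs{\sigma(X_s)^{-1}}\leq\lambda$ and $\abs{Y_s-X_s}\leq 2n$, so $\brackt{N}_{\tau_n}$ is bounded by a deterministic constant; by Novikov's criterion $R_{\cdot\wedge\tau_n}$ is a true martingale, so $\e[R_{\tau_n}]=1$ and $d\mathbb{Q}_n:=R_{\tau_n}\,d\p$ defines a probability measure. The $L^p$ bounds of Proposition~\ref{pro:3.2}, applied to $X$ and $Y$, give $\tau_n\uparrow T$ $\p$-a.s.

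\emph{Girsanov and the built-in drift cancellation.} Under $\mathbb{Q}_n$, $\tilde B_t:=B_t-\int_0^{t\wedge\tau_n}\xi_s^{-1}\sigma(X_s)^{-1}(Y_s-X_s)\,ds$ is a Brownian motion. Substituting $dB_s=d\tilde B_s+\xi_s^{-1}\sigma(X_s)^{-1}(Y_s-X_s)\,ds$ into \eqref{eq:6_2_1}--\eqref{eq:6_2_2}, the drift $\xi_s^{-1}\sigma(Y_s)\sigma(X_s)^{-1}(X_s-Y_s)$ deliberately inserted in \eqref{eq:6_2_2} cancels exactly the Girsanov drift $\xi_s^{-1}\sigma(Y_s)\sigma(X_s)^{-1}(Y_s-X_s)$ acquired by the noise term of $Y$, so on $[0,\tau_n]$
\begin{equation*}
dY_s=b(Y_s,\nu_s)\,ds+\sigma(Y_s)\,d\tilde B_s+dK^Y_s,\qquad dX_s=b(X_s,\mu_s)\,ds+\xi_s^{-1}(Y_s-X_s)\,ds+\sigma(X_s)\,d\tilde B_s+dK_s.
\end{equation*}
Hence $Z_s:=X_s-Y_s$ obeys $dZ_s=\zkuo{b(X_s,\mu_s)-b(Y_s,\nu_s)}\,ds-\xi_s^{-1}Z_s\,ds+\zkuo{\sigma(X_s)-\sigma(Y_s)}\,d\tilde B_s+d(K_s-K^Y_s)$, and it is the mean-reverting term $-\xi_s^{-1}Z_s$ that forces $Z_s\to0$ at rate $\xi_s$ as $s\uparrow T$.

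\emph{The key $L^2$ estimate.} Set $g(s):=\e_{\mathbb{Q}_n}\abs{Z_{s\wedge\tau_n}}^2$; since $R_0=1$ and $Z_0=\xi^{X_0}-\xi^{Y_0}$ is $\mathcal{F}_0$-measurable, $g(0)=\e\abs{\xi^{X_0}-\xi^{Y_0}}^2=\mathcal{W}_2(\mu_0,\nu_0)^2$ by hypothesis. Applying It\^o to $\abs{Z_s}^2$ and taking $\e_{\mathbb{Q}_n}$: the $\tilde B$-martingale part vanishes; the reflection cross-term $\e_{\mathbb{Q}_n}\int 2Z_s\,d(K_s-K^Y_s)$ is nonpositive — this I would establish exactly as in the proof of Lemma~\ref{lem:6.1}, using the Skorokhod identities $\int\e\zkuo{h(X_s)}\,dK_s=\int\e\zkuo{h(Y_s)}\,dK^Y_s=0$, the constraints $\e\zkuo{h(X_s)}\geq0$, $\e\zkuo{h(Y_s)}\geq0$, and the monotonicity of $K,K^Y$; Assumption~\ref{ass:A_1} gives $2Z_s\zkuo{b(X_s,\mu_s)-b(Y_s,\nu_s)}+\abs{\sigma(X_s)-\sigma(Y_s)}^2\leq C\abs{Z_s}^2+C\,\mathcal{W}_2(\mu_s,\nu_s)^2$; and the Gronwall/stability estimate used in the proof of Theorem~\ref{thm:1} bounds $\mathcal{W}_2(\mu_s,\nu_s)^2\leq e^{Cs}\mathcal{W}_2(\mu_0,\nu_0)^2$. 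This yields $g'(s)\leq\kuo{C-2\xi_s^{-1}}g(s)+Ce^{Cs}\mathcal{W}_2(\mu_0,\nu_0)^2$; solving by variation of constants and using the explicit primitive $\int_r^s\xi_u^{-1}\,du=\log\frac{e^{C_1(T-r)}-1}{e^{C_1(T-s)}-1}$ (substitution $u=e^{-C_1(T-\cdot)}$), one obtains $g(s)\leq\Psi(s)\,\mathcal{W}_2(\mu_0,\nu_0)^2$ with $\Psi(s)=O(\xi_s^2)$ near $T$, so that $\xi_s^{-2}\Psi(s)$ is integrable on $[0,T]$, and after tracking the constants $\tfrac{\lambda^2}{2}\int_0^T\xi_s^{-2}\Psi(s)\,ds\leq\phi(0,T)$.

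\emph{Conclusion and the main difficulty.} Under $\mathbb{Q}_n$, $\log R$ splits as a $\mathbb{Q}_n$-martingale plus $\tfrac12\brackt{N}$, so
\begin{equation*}
\e\zkuo{R_{\tau_n}\log R_{\tau_n}}=\tfrac12\,\e_{\mathbb{Q}_n}\brackt{N}_{\tau_n}=\tfrac12\,\e_{\mathbb{Q}_n}\!\int_0^{\tau_n}\xi_s^{-2}\abs{\sigma(X_s)^{-1}Z_s}^2\,ds\leq\tfrac{\lambda^2}{2}\int_0^T\xi_s^{-2}g(s)\,ds\leq\phi(0,T)\,\mathcal{W}_2(\mu_0,\nu_0)^2,
\end{equation*}
uniformly in $n$. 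Hence $\sup_n\e\zkuo{R_{\tau_n}\log R_{\tau_n}}<\infty$, so by the de la Vall\'ee--Poussin criterion $\dkuo{R_{\tau_n}}_n$ is uniformly integrable; combined with $R_{\tau_n}\to R_T$ $\p$-a.s.\ and the martingale property of each $R_{\cdot\wedge\tau_n}$, a standard argument shows $(R_t)_{t\in[0,T]}$ is a uniformly integrable martingale, and Fatou's lemma (lower semicontinuity of relative entropy) upgrades the bound to $\sup_{t\in[0,T]}\e\zkuo{R_t\log R_t}\leq\phi(0,T)\,\mathcal{W}_2(\mu_0,\nu_0)^2$. The hard part will be the $L^2$ estimate of the third step: one must show that $\e_{\mathbb{Q}_n}\abs{X_s-Y_s}^2$ decays like $\xi_s^2\sim(T-s)^2$ as $s\uparrow T$, fast enough to absorb the singular weight $\xi_s^{-2}$ appearing in $\brackt{N}$, and keep track of all constants so that the resulting integral is exactly $\phi(0,T)$; a secondary technical point is the careful treatment of the deterministic reflection terms $K,K^Y$ after the change of measure, which I would reduce to the computation already carried out for Lemma~\ref{lem:6.1}.
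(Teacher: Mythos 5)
Your overall architecture (localization, Girsanov with the built-in drift cancellation, the entropy identity $\e[R_{\tau_n}\log R_{\tau_n}]=\tfrac12\e_{\mathbb{Q}_n}\brackt{N}_{\tau_n}$, handling of the reflection cross-term as in Lemma \ref{lem:6.1}, and the UI/martingale-convergence conclusion) matches the paper's proof. The genuine gap is in your "key $L^2$ estimate". After the crude Young splitting $2Z_s[b(X_s,\mu_s)-b(Y_s,\nu_s)]+\abs{\sigma(X_s)-\sigma(Y_s)}^2\leq C\abs{Z_s}^2+C\,\mathcal{W}_2(\mu_s,\nu_s)^2$, your differential inequality is $g'(s)\leq (C-2\xi_s^{-1})g(s)+C e^{Cs}\mathcal{W}_2(\mu_0,\nu_0)^2$, i.e.\ a strongly damped ODE with an inhomogeneity of \emph{constant} order. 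Variation of constants (using $e^{-2\int_r^s\xi_u^{-1}du}\asymp(\xi_s/\xi_r)^2$) then gives
\begin{equation*}
g(s)\;\lesssim\;\kuo{\frac{\xi_s}{\xi_0}}^{2}g(0)\;+\;\mathcal{W}_2(\mu_0,\nu_0)^2\,\xi_s^{2}\int_0^s\xi_r^{-2}dr\;\asymp\;\xi_s^{2}\,g(0)/\xi_0^2+\mathcal{W}_2(\mu_0,\nu_0)^2\,\xi_s ,
\end{equation*}
because $\xi_s^2\int_0^s\xi_r^{-2}dr\sim\xi_s$ as $s\uparrow T$ (quasi-static balance $0=-2g/\xi_s+c$ forces $g\sim c\,\xi_s/2$). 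So your claimed $\Psi(s)=O(\xi_s^2)$ does not follow from the inequality you wrote; you only get $\Psi(s)\asymp\xi_s$, and then $\int_0^T\xi_s^{-2}\Psi(s)\,ds\asymp\int_0^T\xi_s^{-1}ds=+\infty$ (logarithmic divergence near $T$), so the entropy bound collapses at exactly the step you yourself flagged as "the hard part".

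The repair is to avoid any pointwise decay claim and estimate the singular time-integral directly, which is what the paper does: apply It\^{o} to the \emph{weighted} quantity $\abs{Z_s}^2/\xi_s$. Since $\frac{d}{ds}\xi_s^{-1}=(1-C_1\xi_s)\xi_s^{-2}$, the term $\abs{Z_s}^2 d(\xi_s^{-1})$ combines with the Girsanov drift $-2\abs{Z_s}^2/\xi_s^2$ and the Lipschitz term $C_1\abs{Z_s}^2/\xi_s$ to leave a net damping $-\abs{Z_s}^2/\xi_s^2$; keeping the cross term in the form $C_2\abs{Z_s}\mathcal{W}_2(\mu_s,\nu_s)/\xi_s$ and applying Young against $\abs{Z_s}^2/(2\xi_s^2)$ (not against a weight-one quadratic) yields
\begin{equation*}
\frac12\,\e_{\mathbb{Q}_{t,n}}\!\int_0^{t\wedge\tau_n}\frac{\abs{Z_s}^2}{\xi_s^2}\,ds\;\leq\;\frac{\e\abs{Z_0}^2}{\xi_0}+\frac{C_2^2}{2}\int_0^{T}\mathcal{W}_2(\mu_s,\nu_s)^2ds ,
\end{equation*}
which, together with $\mathcal{W}_2(\mu_s,\nu_s)\leq e^{(C_1+C_2)s}\mathcal{W}_2(\mu_0,\nu_0)$ and $\xi_0^{-1}=C_1/(1-e^{-C_1T})$, produces exactly the constant $\phi(0,T)$ after multiplying by $\lambda^2$. (Alternatively, if you insist on a pointwise route, you must not Young away the $\abs{Z_s}$ factor: with source $\propto\sqrt{g}$ the comparison ODE gives $g(s)=O\kuo{\xi_s^2\log^2(1/\xi_s)}$, which is integrable against $\xi_s^{-2}$, but the constants then do not reproduce $\phi(0,T)$.) The remaining steps of your proposal are sound and essentially coincide with the paper's argument.
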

\begin{proof}
By Assumption \ref{ass:A_1}, for any $n\geq 1$, $\kuo{R_{t\wedge\tau_n}}_{t\in [0,T]}$ is a uniformly integrable continuous martingale. Since $\tau_n \rightarrow T$ as $n\rightarrow \infty$, by the martingale convergence Theorem, we only need to prove 
\begin{equation}\label{eq:lem_6_3_1}
\begin{aligned}
	  \sup_{t\in[0,T), n\geq 1}\e\zkuo{R_{t\wedge \tau_n}\log R_{t\wedge \tau_n}}\leq \phi(0,T) \mathcal{W}_2\kuo{\mu_0,\nu_0}^2\,.
\end{aligned}
\end{equation}

We fix $t \in [0,T)$ and $n\geq 1$. By Girsanov's Theorem, 
$$
\tilde{B}_s:=B_s - \frac{1}{\xi_s}\sigma(X_s)^{-1}(Y_s-X_s), s \in [0,t \wedge \tau_n ] \,,
$$
is a Brownian motion under the weighted probability $\mathbb{Q}_{t,n}: = R_{t\wedge \tau_n} \mathbb{P}$.

Under  $\mathbb{Q}_{t,n}$, reformulating \eqref{eq:6_2_1} and \eqref{eq:6_2_2}
\begin{equation*}
\left\{\begin{array}{l}
X_{t\wedge \tau_n}=\xi^{X_0}+\int_0^{t\wedge \tau_n} b\left(X_s, \mathcal{L}_{X_s}\right)-\frac{X_s-Y_s}{\xi_s} d s+\int_0^{t\wedge \tau_n}  \sigma\left(X_s\right) d \tilde{B}_s+\tilde{K}^X_{t\wedge \tau_n} \\
\mathbb{E}^{\mathbb{Q}_{t,n}}\left[h\left(X_t\right)\right] \geq 0, \quad \int_0^t \mathbb{E}^{\mathbb{Q}_{t,n}}\left[h\left(X_s\right)\right] d\tilde{K}^X_s=0
\end{array}\right.\,.
\end{equation*}

\begin{equation*}
\left\{\begin{array}{l}
Y_{t\wedge \tau_n}=\xi^{Y_0}+\int_0^{t\wedge \tau_n} b\left(Y_s, \mathcal{L}_{Y_s}\right)d s+\int_0^{t\wedge \tau_n}  \sigma\left(Y_s\right) d \tilde{B}_s+\tilde{K}^Y_{t\wedge \tau_n} \\
\mathbb{E}\left[h\left(Y_{t\wedge \tau_n}\right)\right] \geq 0, \quad \int_0^{t\wedge \tau_n} \mathbb{E}^{\mathbb{Q}_{t,n}}\left[h\left(Y_s\right)\right] d\tilde{K}^Y_s=0
\end{array}\right.\,.
\end{equation*}
Apply It\^{o}'s Lemma, we have 
\begin{equation*}
\begin{aligned}
	  \kuo{X_{t\wedge\tau_n}-Y_{t\wedge \tau_n}}^2&\leq \int_0^{t\wedge\tau_n}C_1|X_s-Y_s|^2+C_2\mathcal{W}_2\kuo{\mu_s,\nu_s} - \frac{2|X_s-Y_s|^2}{\xi_s}ds\\
	  &+ \int_0^{t\wedge \tau_n}\kuo{X_s-Y_s}\kuo{\sigma(X_s)-\sigma(Y_s)}d\tilde{B}_s + \int_0^{t \wedge \tau_n}(X_s-Y_s)d\kuo{\tilde{K}^X_s - \tilde{K}^Y_s}
\end{aligned}
\end{equation*}
Using the same argument of proving Lemma \ref{lem:6.1}, we have 
$$
\int_0^{t \wedge \tau_n}(X_s-Y_s)d\kuo{\tilde{K}^X_s - \tilde{K}^Y_s}\leq 0\,.
$$
Then, applying Young's inequality, we have 
\begin{equation*}
\begin{aligned}
	  \frac{\kuo{X_{t \wedge \tau_n}-Y_{t\wedge \tau_n}}^2}{\xi_{t \wedge \tau_n}}&\leq \frac{C^2}{2}\int_0^{t \wedge \tau_n}\mathcal{W}_2\kuo{\mu_s,\nu_s}^2ds - \int_0^{t \wedge \tau_n}\kuo{\frac{3}{2} - C_1 \xi_s}\frac{|X_s - Y_s|^2}{\xi_s^2}ds \\
	  &+ \int_0^{t \wedge \tau_n}\frac{\kuo{X_s-Y_s}}{\xi_s}\kuo{\sigma(X_s)-\sigma(Y_s)}d\tilde{B}_s\,.
\end{aligned}
\end{equation*}
By the exponential convergence (Lemma \ref{lem:6.1}), we have 
$$
\mathcal{W}_2\kuo{\mu_s,\nu_s}\leq \mathcal{W}_2\kuo{\mu_0,\nu_0}e^{(C_1+C_2)s}\,.
$$
Then, we have 
\begin{equation*}
\begin{aligned}
	  \e^{\mathbb{Q}_{t,n}}\int_0^{t\wedge \tau_n}\frac{|X_s - Y_s|^2}{\xi_s^2}ds &\leq \frac{2}{\xi_0}+ \e^{\mathbb{Q}_{t,n}}\int_0^{t\wedge \tau_n}C_2^2\mathcal{W}_2\kuo{\mu_s,\nu_s}^2ds \\
	  &\leq \frac{2}{\xi_0}+TC_2^2e^{2(C_1+C_2)T}\mathcal{W}_2\kuo{\mu_0,\nu_0}^2\,.
\end{aligned}
\end{equation*}
Therefore, \eqref{eq:lem_6_3_1} holds since $t \in [0,T)$ and $n\geq 1$ are arbitrary.
\end{proof}
Now, we can prove Theorem \ref{thm:lhi}. 
\begin{proof}[Proof of Theorem \ref{thm:lhi}]
	By Lemma \ref{lem:6.3} and Girsanov's Theorem, $d\mathbb{Q} := R_T d\mathbb{P}$ is a probability measure such that 
	$$
	\tilde{B}_t : = B_t - \int_0^t\frac{\sigma(X_s)^{-1}(Y_s-X_s)}{\xi_s}ds \quad t\in [0,T]\,,
	$$
	is a Brownian motion. Then, \eqref{eq:6_2_2}
 can be written to 
 \begin{equation}\label{eq:Y_in_Q}
\left\{\begin{array}{l}
Y_{t}=\xi^{Y_0}+\int_0^{t} b\left(Y_s, \mathcal{L}_{Y_s}\right)d s+\int_0^{t}  \sigma\left(Y_s\right) d \tilde{B}_s+\tilde{K}^Y_t \\
\mathbb{E}^{\mathbb{Q}}\left[h\left(Y_t\right)\right] \geq 0, \quad \int_0^t \mathbb{E}^{\mathbb{Q}}\left[h\left(Y_s\right)\right] d\tilde{K}^Y_s=0
\end{array}\right.\,.
\end{equation}
Consider MR-MVSDE under $\mathbb{Q}$ measure
 \begin{equation}
\left\{\begin{array}{l}
\tilde{X}_{t}=\xi^{Y_0}+\int_0^{t} b\left(\tilde{X}_s, \mathcal{L}_{\tilde{X}_s}\right)d s+\int_0^{t}  \sigma\left(\tilde{X}_s\right) d \tilde{B}_s+\tilde{K}^{\tilde{X}}_t \\
\mathbb{E}^{\mathbb{Q}}\left[h\left(\tilde{X}_t\right)\right] \geq 0, \quad \int_0^t \mathbb{E}^{\mathbb{Q}}\left[h\left(\tilde{X}_s\right)\right] d\tilde{K}^Y_s=0
\end{array}\right.\,.
\end{equation}
By Theorem \ref{thm:1}, we have $\tilde{X}_t = Y_t$ for $t\in [0,T]$. So we deduce that 
$$
\kuo{P_T \log f}(\nu_0) \leq \log\kuo{P_T f}(\mu_0) + \phi(0,T)\mathcal{W}_2\kuo{\mu_0,\nu_0}^2\,.
$$
In particular,
$$
P_T\log f(x)\leq \kuo{\log P_T f}(y) + \phi(0,T) |x-y|^2\,.
$$
Following \cite{arnaudon2014equivalent}, we can get for any $f\in \mathcal{B}_b^+(\mathbb{R})$
$$
\left|\nabla P_{s,t} f\right|^2\leq 2\phi(s,t)\zkuo{P_{s,t}f^2 - \kuo{P_{s,t}f}^2}\,.
$$
Repeating the proof of \cite{wang2018distribution}[Theorem 4.1 (ii)], we can obtain for any different $\mu_0$ and $\nu_0\in \mathcal{P}_2(\mathbb{R})$ and any $f\in \mathcal{B}_b^+(\mathbb{R})$,
\begin{equation*}
\begin{aligned}
& \frac{\left|\left(P_{s, t} f\right)\left(\mu_0\right)-\left(P_{s, t} f\right)\left(\nu_0\right)\right|^2}{\mathcal{W}_2\left(\mu_0, \nu_0\right)^2} \\
& \quad \leq 2 \phi(s, t) \sup _{\nu \in B\left(\mu_0, \mathcal{W}_2\left(\mu_0, \nu_0\right)\right)}\left\{\left(P_{s, t} f^2\right)(\nu)-\left(P_{s, t} f\right)^2(\nu)\right\} \,,
\end{aligned}
\end{equation*}
and 
\begin{equation*}
\begin{aligned}
\left\|P_{s, t}^* \mu_0-P_{s, t}^* v_0\right\|_{v a r} & :=2 \sup _{A \in \mathcal{B}_b\left(\mathbb{R}^d\right)}\left|\left(P_{s, t}^* \mu_0\right)(A)-\left(P_{s, t}^* v_0\right)(A)\right| \\
& \leq \sqrt{2 \phi(s, t)} \mathcal{W}_2\left(\mu_0, v_0\right)\,.
\end{aligned}
\end{equation*}
Then, we complete this proof.
 \end{proof}
%\begin{corollary}[Entropy cost inequality]
	
%\end{corollary}

\begin{proof}[Proof of Theorem \ref{thm:shi}]
	Let $Y_t = X_t + \frac{tv}{T}$, then 
	$$
	dY_t = b(Y_t,\mu_t)dt + \sigma(\mu_t)d\tilde{B}_t + dK_t\,,
	$$ 
	where 
	$$
	\begin{aligned}
		\tilde{B}_t &: = B_t + \int_0^t \eta_s ds\,,\\
		\eta_t &: = \sigma(\mu_t)^{-1}\dkuo{\frac{v}{T}+ b(X_t,\mu_t) - b(X_t + \frac{t v}{T},\mu_t)}.
	\end{aligned}
	$$
	Let $R_T = \exp\dkuo{\int_0^T \eta_s dW_s - \frac{1}{2}\int_0^T |\eta_s|^2ds}$. Since $\e\zkuo{\exp(\frac{1}{2}\int_0^T |\eta_s|^2ds)}< \infty$, from Nokikov's condition, we deduce that $R_t$ is a martingale and $\e R_T = 1$. By the Girsanov's Theorem, we can get
	$$
	\kuo{P_Tf}(\mu_0) = \e\zkuo{R_T f(Y_T)} = \e\zkuo{R_Tf(X_T + v)}\leq \kuo{P_T f(v+\cdot)^p}^{\frac{1}{p}}(\mu_0) \kuo{\e R_T^{\frac{p}{p-1}}}^{\frac{p-1}{p}}\,.
	$$
	Applying Young's inequality, we obtain 
	$$
	\begin{aligned}
			\kuo{P_T\log f}(\mu_0) &= \e\zkuo{R_T \log f(Y_T)} = \e\zkuo{R_T\log f\kuo{X_T + v}}\\
			&\leq \log P_T f(v + \cdot)(\mu_0) + \e\zkuo{R_T\log R_T}\,.
	\end{aligned}
	$$
	Then we have 
$$
	  \e R_T^{\frac{p}{p-1}}\leq \exp\dkuo{\frac{p}{2(p-1)^2}\int_0^T |\eta_s|^2ds}\leq \exp\dkuo{\frac{p \int_0^T \sigma(\mu_t)^{-2}\dkuo{|v|/T + t|v|/T}^2dt}{2(p-1)^2}}\,,
$$
and 
$$
\e \zkuo{ R_T \log R_T} = \e^{\mathbb{Q}}\log R_T \leq \frac{1}{2}  \e^{\mathbb{Q}} \int_0^T|\eta_s|^2ds \leq \frac{1}{2}\int_0^T \sigma(\mu_t)^{-2}\dkuo{|v|/T + t|v|/T}^2dt\,.
$$
\end{proof}
\section*{Appendix}
\subsection*{The proof of Lemma \ref{lem:2.1}.}
\begin{proof}[Proof of Lemma \ref{lem:2.1}.]
(i)
	By the definition of $H(x,\nu)$ and the bi-Lipschitz assumption of $h$ we have for any $\nu \in \mathcal{P}_1(\mathbb{R})$
	$$
	\begin{aligned}
		\abs{H(x,\nu)-H(x^\prime,\nu)} &= \abs{\int h(x+z)-h(x^\prime+z)d\nu(z) }\\
		&\leq M \abs{\int x-x^\prime d\nu(z)}\leq C |x-x^\prime|,
	\end{aligned}
	$$
	and 
	$$
	\begin{aligned}
		\abs{H(x,\nu)-H(x^\prime,\nu)} &= \abs{\int h(x+z)-h(x^\prime+z)d\nu(z) }\\
		&\geq m\abs{\int x-x^\prime d\nu(z)}\geq c|x-x^\prime|.
	\end{aligned}
	$$

	(ii) For any $\nu$, $\nu^\prime \in \mathcal{P}_1(\mathbb{R})$, by definetion we have 
	$$
	\begin{aligned}
		\abs{H(x,\nu)-H(x,\nu^\prime)}\leq \abs{\int h(x+\cdot )(d\nu-d\nu^\prime)}.
	\end{aligned}
	$$
\end{proof}

\subsection*{Well-posedness of \eqref{eq:s_DP_1}}
\begin{theorem}\label{thm:well_posedness_sDP1}
	Under Assumptions \ref{ass:A_1} and \ref{ass:A_2}, the MR-MVSDE \eqref{eq:s_DP_1} has a unique deterministic flat solution $(X,K)$. Moreover, 
\begin{equation}
\forall t \geq 0, \quad K_t=\sup _{s \leq t} \inf \left\{x \geq 0: \mathbb{E}\left[h\left(x+U_s\right)\right] \geq 0\right\}=\sup _{s \leq t} G_0\left(\law{U_s}\right),
\end{equation}
where $\kuo{U_t}_{0\leq t\leq T}$ is the process defined by :
\begin{equation}\label{eq:U}
\begin{aligned}
	  U_t = \xi + \int_0^t b(X_s,\mathcal{L}_{X_s})dA_s + \int_0^t \sigma(X_s,\mathcal{L}_{X_s})dM_s.
\end{aligned}
\end{equation}
With these notations, denoting by $\law{U_s}$ the family of marginal laws of $\kuo{U_t}_{0\leq t\leq T}$ we have 
$$
K_t = \sup_{s\leq t}G_0(\law{U_s}).
$$
\end{theorem}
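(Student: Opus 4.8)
The plan is to run the proof of Theorem~\ref{thm:1} essentially verbatim, replacing the clock $\mathrm{d}s$ by $\mathrm{d}A_s$ and the Brownian differential $\mathrm{d}B_s$ by $\mathrm{d}M_s$, and replacing the deterministic Gronwall lemma by its stochastic counterpart (exactly as already done in the proof of Theorem~\ref{thm:srdp}). Concretely, on the space $\mathcal{C}^2$ of $\mathcal F_t$-adapted continuous processes with $\e[\sup_{t\le T}\abs{X_t}^2]<\infty$ we fix $\tilde X\in\mathcal C^2$, set $\tilde U_t=\xi+\int_0^t b(\tilde X_s,\law{\tilde X_s})\,\mathrm{d}A_s+\int_0^t\sigma(\tilde X_s,\law{\tilde X_s})\,\mathrm{d}M_s$, then $K_t:=\sup_{s\le t}\inf\{x\ge 0:\e[h(x+\tilde U_s)]\ge 0\}=\sup_{s\le t}G_0(\law{\tilde U_s})$, and finally $X_t:=\tilde U_t+K_t$. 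Since $h$ is nondecreasing and $\e[h(\xi)]\ge 0$ we have $G_0(\law{\tilde U_0})=0$, so $K$ is deterministic, continuous, nondecreasing with $K_0=0$; the verification that $\e[h(X_t)]\ge 0$ and $\int_0^t\e[h(X_s)]\,\mathrm{d}K_s=0$ is word for word Step~2 of the proof of Theorem~\ref{thm:1} (on $\{\mathrm{d}K_s>0\}$ one has $K_s=G_0(\law{\tilde U_s})>0$, hence $\e[h(\tilde U_s+G_0(\law{\tilde U_s}))]=0$ by continuity of $H(\cdot,\law{\tilde U_s})$ from Lemma~\ref{lem:2.1}). This defines the solution map $\Phi:\tilde X\mapsto X$ on $\mathcal C^2$, whose fixed points are exactly the deterministic flat solutions of \eqref{eq:s_DP_1}; at a fixed point one has $\tilde U=U$, which yields the asserted representation $K_t=\sup_{s\le t}G_0(\law{U_s})$ with $U$ given by \eqref{eq:U}.

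The remaining task is to show $\Phi$ admits a unique fixed point. Given $\tilde X,\tilde X'\in\mathcal C^2$ built from the same $(A,M)$, one estimates $\e[\sup_{r\le t}\abs{X_r-X_r'}^2]$ by the usual three-term split. For the bounded-variation term, Cauchy--Schwarz against the total-variation measure $\mathrm{d}\abs{A}$ together with Assumption~\ref{ass:A_1} gives a bound by $\abs{A}_t\int_0^t(\abs{\tilde X_s-\tilde X_s'}^2+\mathcal W_2(\law{\tilde X_s},\law{\tilde X_s'})^2)\,\mathrm{d}\abs{A}_s$; for the local-martingale term, Burkholder--Davis--Gundy plus Assumption~\ref{ass:A_1} gives $C\,\e\!\int_0^t(\abs{\tilde X_s-\tilde X_s'}^2+\mathcal W_2(\law{\tilde X_s},\law{\tilde X_s'})^2)\,\mathrm{d}\brackt{M}_s$; and for the reflection term, Lemma~\ref{lem:2.2} gives $\sup_{r\le t}\abs{K_r-K_r'}^2\le\tfrac{M}{m}\,\e[\sup_{r\le t}\abs{\tilde U_r-\tilde U_r'}^2]$, where $\tilde U-\tilde U'$ is itself controlled by the same two integrals. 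Using $\mathcal W_2(\law{\tilde X_s},\law{\tilde X_s'})^2\le\e[\abs{\tilde X_s-\tilde X_s'}^2]$ and writing $\varphi_t:=\abs{A}_t+\brackt{M}_t$, one obtains an inequality of the shape
$$
\e\Big[\sup_{r\le t}\abs{X_r-X_r'}^2\Big]\le C\,\e\!\int_0^t\sup_{r\le s}\abs{\tilde X_r-\tilde X_r'}^2\,\mathrm{d}\varphi_s,
$$
with $C$ depending only on $b,\sigma,M,m$ (and the integrability of $\varphi_T$). Uniqueness is then immediate: if $X,X'$ are two solutions, take $\tilde X=X$, $\tilde X'=X'$ and apply the stochastic Gronwall lemma to conclude $X=X'$. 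For existence one iterates $X^{(n+1)}=\Phi(X^{(n)})$ starting from $X^{(0)}\equiv\xi$; the iterated inequality produces the factor $\tfrac{1}{n!}$ against the (bounded) increasing process $\varphi$, so $(X^{(n)})$ is Cauchy in $L^2(\Omega;\mathcal C([0,T]))$ and its limit is the required fixed point.

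The main obstacle is the \emph{random clock}: unlike in Theorem~\ref{thm:1}, the natural smallness parameter is not a deterministic time but the increment of $\varphi_t=\abs{A}_t+\brackt{M}_t$, so the naive ``contraction on a short deterministic interval, then iterate'' does not apply directly; one must instead run a stochastic Gronwall / Picard argument (as in Theorem~\ref{thm:srdp}), possibly after the localization $\tau_N=\inf\{t:\varphi_t\ge N\}$ and patching across the (a.s.\ finitely many on any $[0,T]$) stopping times $\tau_N$ when $\varphi$ is only locally integrable. The point requiring a little care in the mean-reflected setting is that $K_t$ depends on the \emph{law of the whole path} $(\tilde U_s)_{s\le t}$, not just on the state at time $t$; however, since Lemma~\ref{lem:2.2} bounds $\sup_{r\le t}\abs{K_r-K_r'}^2$ by $\e[\sup_{r\le t}\abs{\tilde U_r-\tilde U_r'}^2]$, this global dependence is absorbed into the same $\e[\sup_{r\le s}\abs{\cdot}^2]$ term that appears on the right-hand side above, so the localization and the Gronwall argument go through unchanged. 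Finally, iterating the construction over successive intervals extends the solution, and hence its uniqueness, to all of $\mathbb R^+$.
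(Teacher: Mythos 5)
Your proposal follows essentially the same route as the paper's appendix proof: construct the map $\Phi$ via $\tilde U$, $K_t=\sup_{s\le t}G_0(\law{\tilde U_s})$ and $X=\tilde U+K$, verify the Skorokhod condition exactly as in Step~2 of Theorem~\ref{thm:1}, control the reflection term through Lemma~\ref{lem:2.2}, and close the estimate against $d\abs{A}_s+d\brackt{M}_s$ with the (stochastic) Gronwall lemma. If anything, your explicit treatment of the random clock via localization and Picard iteration is more careful than the paper's brief contraction-plus-stochastic-Gronwall statement, but the underlying argument is the same.
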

\begin{proof}
	Let us consider the set $\mathcal{C}^2 = \dkuo{X \text{ is } \mathcal{F}-\text{adapted continuous process}, \e\kuo{\sup_{0\leq t\leq T}|X_t|^2< \infty}} $. And let $\widetilde{X}\in \mathcal{C}^2$ be a given process. We set 
\begin{equation}
\tilde{U}_t=\xi+\int_0^t b\left(\tilde{X}_s, \mathcal{L}_{\tilde{X}_s}\right) d A_s+\int_0^t \sigma\left(\tilde{X}_s, \mathcal{L}_{\tilde{X}_s}\right) d M_s, \quad \operatorname{Law}\left(\tilde{U}_s\right)=: \law{\tilde {U}_s}\,,
\end{equation}
and define the function $K$ by setting
$$
{K_t=\sup _{s \leq t} \inf \left\{x \geq 0: \mathbb{E}\left[h\left(x+\tilde{U}_s\right)\right] \geq 0\right\}=G_0\left(\law{\tilde {U}_s}\right) .}
$$
The function $K$ being given, let us define the process $X$ by the formula
\begin{equation}
\begin{aligned}
	  X_t=\xi+\int_0^t b\left(\tilde{X}_s,\mathcal{L}_{\tilde{X}_s}\right) d A_s+\int_0^t \sigma\left(\tilde{X}_s,\mathcal{L}_{\tilde{X}_s}\right) d M_s+K_t .
\end{aligned}
\end{equation}

Based on the definition of $K$, we have $\e\zkuo{h(X_t)}\geq 0$, $K_t = G_0\kuo{\law{\tilde{U}_t}}, dK_t-a.e.$ and $G_0(\law{\tilde{U}_t})>0, dK_t-a.e.$ Then we obtain 
$$
\begin{aligned}
\int_0^t \mathbb{E}\left[h\left(X_s\right)\right] d K_s & =\int_0^t \mathbb{E}\left[h\left(\tilde{U}_s+K_s\right)\right] d K_s \\
& =\int_0^t \mathbb{E}\left[h\left(\tilde{U}_s+G_0\left(\law{\tilde{U}_s}\right)\right)\right] d K_s \\
& =\int_0^t \mathbb{E}\left[h\left(\tilde{U}_s+G_0\left(\law{\tilde{U}_s}\right)\right)\right] \mathbbm{1}_{\dkuo{G_0\left(\law{\tilde{U}_s}\right)>0}} d K_s .
\end{aligned}
$$
Moreover, since $h$ is continuous, we have $\mathbb{E}\left[h\left(\tilde{U}_s+G_0\left(\law{\tilde{U}_s}\right)\right)\right]=0$ as soon as $G_0\left(\law{\tilde{U}_s}\right)>0$, so that
$$
\int_0^t \mathbb{E}\left[h\left(X_s\right)\right] d K_s=0\,.
$$

 Let $\tilde{X}, \tilde{X}^\prime \in \mathcal{C}^2$ and define $\tilde{U}$, $K$ and $\tilde{U}^\prime$, $K^\prime$ as above, using the some Brownian motion. We have from the Lipschitz assumption of $b$ and $\sigma$, B-D-G inequality 
 \begin{equation}\label{eq:A_Th_1.1}
\begin{aligned}
	  &\e\zkuo{\sup_{t\leq T}|\tilde{X}_t - \tilde{X}_t^\prime|^2}\\
	  &\leq  3\e \left[\sup_{t\leq T}\left\{\abs{\int_0^t\kuo{b\kuo{\tilde{X}_s,\mathcal{L}_{\tilde{X}_s}}- b\kuo{\tilde{X}^\prime_s,\mathcal{L}_{\tilde{X}^\prime_s}} 
	  }dA_s}^2 + \abs{\int_0^t \kuo{\sigma\kuo{\tilde{X}_s,\mathcal{L}_{\tilde{X}_s}}-\sigma\kuo{\tilde{X}^\prime_s,\mathcal{L}_{\tilde{X}^\prime_s}}}dM_s}^2\right.\right.\\
	  &\left.\left.\qquad +\abs{K_t - K^\prime_t}^2 \right\}\right]\\
	  &\leq3 \mathbb{E}\left[\left.T\cdot \sup _{t \leq T} \int_0^t\left(b\left(\tilde{X}_s, \mathcal{L}_{\tilde{X}_s}\right)-b\left(\tilde{X}_s^{\prime}, \mathcal{L}_{\tilde{X}_s^{\prime}}\right)\right) d A_s\right|^2\right]\\
     &\qquad+3 \mathbb{E}\left[\sup _{t \leq T}\left|\int_0^t\left(\sigma\left(\tilde{X}_s, \mathcal{L}_{\tilde{X}_s}\right)-\sigma\left(\tilde{X}^{\prime}_s, \mathcal{L}_{\tilde{X}^{\prime}_s}\right)\right) d M_s\right|^2\right]+ 3 \sup_{t\leq T}\abs{K_t - K^\prime_t}^2 \\
	  & \leq C\int_0^t\e\zkuo{\sup_{t\leq T}|\tilde X_t-\tilde X^\prime_t|^2}d(A_s+\brackt{M}_s) + 3 \sup_{t\leq T}\abs{K_t - K^\prime_t}^2   \,.
	  %\\
	  %&\leq  C(1+T) T \mathbb{E}\left[\sup _{t \leq T}\left|\tilde{X}_t-\tilde{X}_t^{\prime}\right|^2\right]
	  	  %&:= 3(I_1 + I_2 + I_3)
\end{aligned}
\end{equation}
From the representation of $K$ and Lemma \ref{lem:2.2}, we have 
\begin{equation}\label{eq:A_K_t}
\begin{aligned}
	  \sup_{t\leq T}|K_t-K^\prime_t|^2 &= \sup_{t\leq T}\abs{\sup_{s\leq t}G_0(\law{\tilde X_t})-\sup_{s\leq t}G_0(\law{\tilde X_t^\prime})}\leq \sup_{t\leq T}\abs{G_0(\law{\tilde X_t})-G_0(\law{\tilde X_t^\prime})}\\
	  &\leq \frac{M}{m} \mathbb{E}\left[\sup _{t \leq T}\left|\tilde{U}_t-\tilde{U}^{\prime}_t\right|^2\right] \leq  C\mathbb{E}\left[\sup _{t \leq T}\left|\tilde{X}_t-\tilde{X}_t^{\prime}\right|^2\right] \,.
\end{aligned}
\end{equation}
Plugging \eqref{eq:A_K_t} into \eqref{eq:A_Th_1.1}, and use the stochastic Gronwall lemma, we have 
$$
\e\zkuo{\sup_{t\leq T}|\tilde X_t - \tilde X_t^\prime|^2}\leq  C(1+T) T \mathbb{E}\left[\sup _{t \leq T}\left|\tilde{X}_t-\tilde{X}_t^{\prime}\right|^2\right].
$$

Hence, there exists a positive $\tilde{T}$, depending on $b, \sigma$ and $h$ only, such that for all $T<\tilde{T}$, the map $\Phi$ is a contraction. We first deduce the existence and uniqueness of the solution on $[0, \tilde{T}]$ and then on $\mathbb{R}^{+}$ by iterating the construction.

\end{proof}

\section*{Acknowledgements}
The authors would like to thank the referees for their very constructive suggestions and valuable comments. 

 \bibliographystyle{elsarticle-num} 
\bibliography{cas-refs}

\end{document}